\newtheorem{thm}{Theorem}[section]
\newtheorem{cor}[thm]{Corollary}
\newtheorem{lem}[thm]{Lemma}
\newtheorem{prop}[thm]{Proposition}
\newtheorem{conj}[thm]{Conjecture}
\theoremstyle{definition}
\newtheorem{defn}[thm]{Definition}
\theoremstyle{remark}
\newtheorem{rem}[thm]{Remark}
\numberwithin{equation}{section}
\newcommand{\hc}{\mathbf{H}^2_{\mathbb{C}}}
\newcommand{\pu}{\rm{PU}(2,1)}
\begin{document}

\date{\today}

\title[spherical CR structure on a two-cusped hyperbolic 3-manifold]{A uniformizable spherical CR structure on a two-cusped hyperbolic 3-manifold}
\author{Yueping Jiang, Jieyan Wang and Baohua Xie}
\address{School of Mathematics \\ Hunan University \\ Changsha \\ China}
\email{ypjiang@hnu.edu.cn, jywang@hnu.edu.cn, xiexbh@hnu.edu.cn}

\keywords{Complex hyperbolic space, spherical CR uniformization, triangle
groups, Ford domain,  hyperbolic 3-manifolds}

\subjclass[2010]{20H10, 57M50, 22E40, 51M10.}

\thanks{Y. Jiang was supported by NSFC (No.11631010). J. Wang was supported by NSFC (No.11701165). B. Xie was supported by NSFC (No.11871202) and  Hunan Provincial Natural Science Foundation of China (No.2018JJ3024).}

\maketitle

\begin{abstract}
Let $\langle I_{1}, I_{2}, I_{3}\rangle$ be the complex hyperbolic $(4,4,\infty)$ triangle group. In this paper we give a proof of a conjecture of Schwartz for $\langle I_{1}, I_{2}, I_{3}\rangle$. That is $\langle I_{1}, I_{2}, I_{3}\rangle$ is discrete and faithful if and only if $I_1I_3I_2I_3$ is nonelliptic. When $I_1I_3I_2I_3$ is parabolic, we show that the even subgroup $\langle I_2 I_3, I_2I_1 \rangle$ is the holonomy representation of a uniformizable spherical CR structure on  the two-cusped hyperbolic 3-manifold $s782$ in SnapPy notation.

\end{abstract}

\section{Introduction}

Let $\hc$ be the complex hyperbolic plane, $\pu$ be its holomorphic isometry group. See Section \ref{sec:background} for more details. It is well known that $\hc$ is one of the rank one symmetric spaces and $\pu$ is a semisimple Lie group.
$\hc$ can be viewed as the unit ball in $\mathbb{C}^2$ equipped with the Bergman metric. Its ideal boundary $\partial \hc$ is the 3-sphere $S^3$.
The purpose of this paper is to study the geometry of discrete subgroups of $\pu$.

Let $M$ be a 3-manifold. A \emph{spherical CR structure} on $M$ is a system of coordinate charts into $S^3$, such that the transition functions are restrictions of elements of $\pu$. Any spherical CR structure on $M$ determines a pair $(\rho,d)$, where $\rho:\pi_1(M)\longrightarrow \pu$ is the holonomy and $d:\widetilde{M}\longrightarrow S^3$ is the developing map. There is a special spherical CR structure.
A \emph{uniformizable spherical CR structure} on $M$ is a homeomorphism between $M$ and a quotient space $\Omega/\Gamma$, where $\Gamma$ is a discrete subgroup of $\pu$ and $\Omega\subset\partial \hc$ is the discontinuity region of $\Gamma$.  An interesting problem in complex hyperbolic geometry is to find  (uniformizable) spherical CR structures on hyperbolic 3-manifolds.

Geometric structures modelled on the boundary of complex hyperbolic space are rather difficult to construct. The first example  of a spherical CR  structure existing on a cusped hyperbolic 3-manifold  was discovered by Schwartz. In the work \cite{schwartz-litg}, Schwartz constructed a uniformizable spherical CR structure on the Whitehead link complement. He also constructed a closed hyperbolic 3-manifold that admits a uniformizable spherical CR structure in \cite{schwartz-real} at almost the same time.

Let $M_8$ be the complement of the figure eight knot. There are several works showed that $M_8$ admits (uniformizable) spherical CR structures.
In \cite{fal}, Falbel constructed two different representations $\rho_1, \rho_2$ of $\pi_1(M_8)$ in $\pu$, and proved that $\rho_1$ is the holonomy of a spherical CR structure on $M_8$. In \cite{fal-wang}, Falbel and Wang proved that $\rho_2$ is also the holonomy of a spherical CR structure on $M_8$.
In \cite{der-fal}, Deraux and Falbel constructed a uniformizable spherical CR structure on $M_8$ whose holonomy is $\rho_2$. In \cite{deraux-family}, Deraux proved that there is a 1-parameter family of spherical CR uniformizaitons of the figure eight knot complement. This family is in fact a deformation of the uniformization constructed in \cite{der-fal}.

Let us back to the Whitehead link complement. It admits a uniformizable spherical CR structure which is different from Schwartz's one.
In the recent work \cite{par-will2}, Parker and Will also constructed a spherical CR uniformization of the Whitehead link complement. By applying spherical CR Dehn surgery theorems to the uniformizations of the Whitehead link complement, one can get infinity many manifolds which admit uniformizable spherical CR structures.
In \cite{schwartz-cr}, Schwartz proved a spherical CR Dehn surgery theorem, and using it to the spherical CR uniformization of the Whitehead link complement constructed in \cite{schwartz-litg} to obtain infinity many closed hyperbolic 3-manifolds which admit uniformizable spherical CR structures.
In \cite{acosta}, Acosta used the spherical CR Dehn surgery theorem proved in \cite{acosta-scr} to the spherical CR uniformization of the Whitehead link complement constructed by Parker and Will in \cite{par-will2} to obtain infinity many one-cusped hyperbolic 3-manifolds which admit uniformizable spherical CR structures. In particular, the spherical CR uniformization of the complement of the figure eight knot constructed by Deraux and Falbel \cite{der-fal} is contained in this family.

There are some hyperbolic 3-manifolds described in the SnapPy census (see \cite{cdw}) which admit spherical CR structures.
In \cite{deraux-scr}, Deraux proved that the cusped hyperbolic 3-manifold $m009$ admits a uniformizable spherical CR structure whose holonomy representation was constructed by Falbel, Koseleff and Rouillier in \cite{fkr}. In \cite{mx}, Ma and Xie proved that the one-cusped hyperbolic 3-manifolds $m038$ and $s090$ admit spherical CR uniformizations.

In this paper, we show that the two-cusped hyperbolic 3-manifold $s782$ admits a uniformizable spherical CR structure. By studying the action of the even subgroup of a discrete complex hyperbolic triangle group on $\hc$, we will prove that the quotient space of its discontinuity region is homeomorphic to $s782$. That means the holonomy representation of the spherical CR uniformization of $s782$ is a triangle group.


Now let us talk about the complex hyperbolic triangle groups. Let $\Delta_{p,q,r}$ be the abstract $(p,q,r)$ reflection triangle group with the presentation
$$\langle \sigma_1, \sigma_2, \sigma_3 | \sigma^2_1=\sigma^2_2=\sigma^2_3=(\sigma_2 \sigma_3)^p=(\sigma_3 \sigma_1)^q=(\sigma_1 \sigma_2)^r=id \rangle,$$
where $p,q,r$ are positive integers or $\infty$ in which case the corresponding relation disappears.
A \emph{complex hyperbolic $(p,q,r)$ triangle group} is a representation of $\Delta_{p,q,r}$ in $\pu$, which maps the generators to complex involutions fixing complex lines in $\hc$. The study of complex hyperbolic triangle groups was begun by Goldman and Parker. In \cite{Go-P}, Goldman and Parker studied the complex $(\infty,\infty,\infty)$ triangle groups. They conjectured that a representation of $\Delta_{\infty,\infty,\infty}$ into $\pu$ is discrete and faithful if and only if the image of $\sigma_1 \sigma_2 \sigma_3$ is nonelliptic. The Goldman-Parker conjecture was proved by Schwartz in \cite{schwartz-go-p1} (or a better proof in \cite{schwartz-go-p2}). In particular, the representation with the image of $\sigma_1 \sigma_2 \sigma_3$ being parabolic is closely related with the holonomy of the spherical CR uniformizaiton of the Whitehead link complement constructed in \cite{schwartz-litg}. In the survey \cite{schwartz-icm}, a series of conjectures on complex hyperbolic triangle groups are put forward. Schwartz conjectured that:
\begin{conj}[Schwartz \cite{schwartz-icm}]\label{conj:schwartz}
Suppose that $p\leq q \leq r$. Let $\langle I_1, I_2, I_3 \rangle$ be a complex hyperbolic $(p,q,r)$ triangle group. Then $\langle I_1, I_2, I_3 \rangle$ is a discrete and faithful representation of $\Delta_{p,q,r}$ if and only if $I_1I_3I_2I_3$ and $I_1I_2I_3$ are nonelliptic. Moreover,
\begin{itemize}
  \item If $3 \leq p <10$, then $\langle I_1, I_2, I_3 \rangle$ is discrete and faithful if and only if $I_1I_3I_2I_3$ is nonelliptic.
  \item If $p>13$, then $\langle I_1, I_2, I_3 \rangle$ is discrete and faithful if and only if $I_1I_2I_3$ is nonelliptic.
\end{itemize}
\end{conj}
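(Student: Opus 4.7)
The plan is to separate the equivalence into an algebraic ``forward'' direction and a geometric ``converse'' direction, working throughout in a one-parameter family. Fix $p\le q\le r$ and parametrize the moduli space of complex hyperbolic $(p,q,r)$ triangle groups by the Cartan angular invariant $t$ of the three polar vectors to the mirrors of $I_1,I_2,I_3$; along this family the traces of $I_1I_3I_2I_3$ and $I_1I_2I_3$ are real-analytic in $t$, and the conjecture predicts that the discreteness locus in $t$ is precisely the closed interval on which neither trace crosses the elliptic threshold.

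For the forward direction (if $I_1I_3I_2I_3$ or $I_1I_2I_3$ is elliptic then $\langle I_1,I_2,I_3\rangle$ is not a discrete and faithful image of $\Delta_{p,q,r}$) the argument is algebraic. If the offending word $w$ is elliptic of finite order $n$, then the relation $w^n=1$ must be a consequence of the defining relators of $\Delta_{p,q,r}$; a combinatorial check in the abstract Coxeter group $\Delta_{p,q,r}$, using for instance its faithful action on the hyperbolic plane in the cases with $1/p+1/q+1/r<1$, rules this out. If the element is elliptic of infinite order, then its powers accumulate at the identity in $\pu$, contradicting discreteness.

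For the converse I would follow the template established by Schwartz for the Goldman--Parker conjecture and by the literature cited above for individual triples. At each critical parameter $t_0$ where one of the two obstructing words first becomes parabolic, construct an explicit Ford or Dirichlet fundamental polyhedron for $\langle I_1,I_2,I_3\rangle$ in $\hc$, with faces given by suitably chosen bisectors paired by the generators, and verify the hypotheses of the Poincar\'e polyhedron theorem; this gives discreteness and faithfulness at $t_0$. The conclusion then propagates to the entire subinterval on which both $I_1I_3I_2I_3$ and $I_1I_2I_3$ remain nonelliptic, by a stability-of-polyhedron argument in the representation variety of $\Delta_{p,q,r}$: the combinatorial type of the domain can change only when a face identification is created or destroyed, and this forces crossing one of the two cusp loci. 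The two refinements of the conjecture then reduce to a monotonicity-of-trace lemma: for $3\le p<10$ the parabolic threshold for $I_1I_3I_2I_3$ is encountered first as $t$ leaves the Fuchsian locus, while for $p>13$ the threshold for $I_1I_2I_3$ comes first, so in each of these regimes only one cusp condition is binding.

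The principal obstacle is the construction of the fundamental polyhedron. Its combinatorial type depends discontinuously on $(p,q,r)$, and the bisector intersection geometry needed to verify the Poincar\'e conditions is precisely the delicate calculation that the body of the present paper carries out for the single triple $(p,q,r)=(4,4,\infty)$. No uniform construction across $(p,q,r)$ is presently known, and the transitional range $10\le p\le 13$, in which neither cusp condition alone suffices, is especially subtle because both degenerations must be monitored simultaneously. For these reasons the conjecture has historically been attacked one triangle type at a time, and the plan above, taken literally, is feasible only in combination with a case-by-case polyhedron construction of the kind performed here; completing the full conjecture will in my view require a new uniform bisector combinatorics valid across infinite families of $(p,q,r)$.
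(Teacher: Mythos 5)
The statement you are asked to prove is labelled as a \emph{conjecture} in the paper, and the paper does not prove it: it only establishes the single instance $(p,q,r)=(4,4,\infty)$, by an explicit Ford domain whose sides are the isometric spheres of $S$, $S^{-1}$, $S^{2}$, $(S^{-1}T)^{2}$ and their $T$-conjugates, verified against the Poincar\'e polyhedron theorem for every parameter $\theta\in[0,\pi/3]$. Your text is therefore correctly read as a research plan for an open problem rather than a proof, and you say as much in your final paragraph; to that extent your self-assessment is accurate and your outline matches the strategy actually used in the literature (and in this paper for its special case). But since the task is to produce a proof, I have to flag the places where the plan is not merely incomplete but would break.

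The essential gap is the propagation step in your converse direction. You propose to verify the Poincar\'e theorem only at the critical parameter $t_0$ where an obstructing word becomes parabolic, and then to ``propagate'' discreteness over the whole nonelliptic interval by arguing that the combinatorial type of the polyhedron can change only when a face identification is created or destroyed, which you assert forces crossing a cusp locus. That assertion is false in general: the combinatorics of a Ford or Dirichlet domain can change because a previously invisible isometric sphere becomes visible, or because two visible spheres begin or cease to intersect, without any group element becoming parabolic. This is exactly why the paper proves Propositions \ref{prop:s0plus}--\ref{prop:triple} and Corollary \ref{cor:intersections} uniformly for all $\theta\in[0,\pi/3]$ rather than only at $\theta=\pi/3$; the entire technical content of such proofs is the verification that the list of visible spheres and their intersection pattern is stable across the whole interval, and there is no known soft argument that substitutes for it. A second, smaller gap: you reduce the two ``Moreover'' bullets to a monotonicity-of-trace lemma about which parabolic threshold is met first, but that only shows which condition is binding \emph{assuming} the main equivalence; it does not address the transitional range $10\le p\le 13$ (where the conjecture's first sentence is still asserted) and it silently assumes the nonelliptic locus of each word is a single interval adjacent to the Fuchsian point, which must itself be checked for each word and each $(p,q,r)$. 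Your forward direction is essentially fine (finite-order elliptic image of an infinite-order word kills faithfulness, infinite-order elliptic kills discreteness), provided you actually verify that $I_1I_3I_2I_3$ and $I_1I_2I_3$ have infinite order in $\Delta_{p,q,r}$, which your appeal to the Fuchsian action only covers when $1/p+1/q+1/r<1$.
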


In a recent work \cite{par-will2}, Parker and Will proved Conjecture \ref{conj:schwartz} for complex hyperbolic $(3,3,\infty)$ groups. They also showed that when $I_1I_3I_2I_3$ is parabolic the quotient of $\hc$ by the group $\langle I_2 I_3, I_2I_1 \rangle$ is a complex hyperbolic orbifold whose boundary is a spherical CR uniformization of the Whitehead link complement.
In \cite{pwx}, Parker, Wang and Xie proved Conjecture \ref{conj:schwartz} for complex hyperbolic $(3,3,n)$ groups with $n\geq 4$. Furthermore, Acosta  \cite{acosta} showed that when $I_1I_3I_2I_3$ is parabolic the group $\langle I_2 I_3, I_2I_1 \rangle$ is the holonomy representation of a uniformizable spherical CR structure on the Dehn surgery of the Whitehead link complement on one cusp of type $(1,n-3)$.

In this paper, we give a proof of Conjecture \ref{conj:schwartz} for the complex hyperbolic $(4,4,\infty)$ triangle groups and further analyze the group when $I_1I_3I_2I_3$ is parabolic. Our result is as follows:
\begin{thm}
Let $\langle I_1, I_2, I_3 \rangle$ be a complex hyperbolic $(4,4,\infty)$ triangle group. Then $\langle I_1, I_2, I_3 \rangle$ is a discrete and faithful representation of $\Delta_{4,4,\infty}$ if and only if $I_1I_3I_2I_3$ is nonelliptic. Moreover, when $I_1I_3I_2I_3$ is parabolic the quotient of $\hc$ by the group $\langle I_2 I_3, I_2I_1 \rangle$ is a complex hyperbolic orbifold whose boundary is a spherical CR uniformization of the two-cusped hyperbolic 3-manifold $s782$ in the SnapPy census.
\end{thm}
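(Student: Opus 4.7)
The plan is to follow the template established for the $(3,3,\infty)$ case by Parker–Will and the $(3,3,n)$ case by Parker–Wang–Xie, adapted to the new parameter setting. First I would set up an explicit one-parameter family of representations of $\Delta_{4,4,\infty}$ into $\pu$: up to conjugation, the three involutions $I_j$ fixing complex lines are determined by the requirement that $I_2I_3$ and $I_3I_1$ are elliptic of order $4$ and $I_1I_2$ is parabolic, leaving a single real modulus (for example, $\mathrm{tr}(I_1I_3I_2I_3)$, or equivalently the angular invariant of the polar triple of the mirrors). I would write down matrices for $I_1,I_2,I_3$ in this family, identify the critical value of the parameter at which $W:=I_1I_3I_2I_3$ degenerates from loxodromic to parabolic to elliptic, and verify that outside the discreteness range $W$ is elliptic of non-integer rotation angle, yielding non-discreteness on that side by the standard accumulation argument.

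The heart of the argument is the critical parameter, where $W$ is parabolic. Here I would construct a Ford domain $F$ for the even subgroup $\Gamma=\langle I_2I_3,I_2I_1\rangle$ centered at the parabolic fixed point of $I_1I_2$ (or equivalently work with the Ford domain centered at the fixed point of $W$, depending on which yields the cleaner combinatorics). Concretely, one writes $\Gamma$ in upper-half-space-type Heisenberg coordinates so that one cusp is at infinity, computes the bisectors separating the Cygan isometric spheres of the relevant short words, and assembles the complement of the open balls they bound. The key technical step is to isolate a finite collection of generators whose bisectors carve out $F$, then apply the Poincar\'e polyhedron theorem: this requires (i) verifying face pairings by short words in $\Gamma$, (ii) verifying that edge cycles close up with the right total angle, (iii) handling the cusp groups at the ideal fixed points (here both $I_1I_2$ and $W$ give parabolic stabilizers). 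Once the Poincar\'e theorem applies, discreteness, faithfulness, and a presentation for $\Gamma$ (and hence for the full triangle group, via the index-two extension) follow immediately at the critical parameter.

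To extend discreteness and faithfulness to the whole loxodromic range, I would use a continuity/deformation argument: the combinatorial structure of $F$ is stable under a perturbation that keeps all bisectors in general position, so one can track $F$ along the one-parameter family from the critical value toward the loxodromic end, checking that no new face collapses or new tangency appears until one reaches the boundary where the $(4,4,\infty)$ structure itself degenerates. This gives the full ``if and only if'' of the first part of the theorem.

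For the identification with the two-cusped hyperbolic $3$-manifold $s782$, I would analyze the ideal boundary $F\cap\partial\hc$ at the parabolic parameter. The spinal spheres of the relevant bisectors cut $\partial\hc$ into a polyhedron whose boundary face pairings, inherited from the side pairings of $F$, define a CW structure on $\partial_\infty(\Omega/\Gamma)$; the two cusp cross-sections are tori coming from the $\mathbb{Z}^2$ parabolic stabilizers of the fixed points of $I_1I_2$ and of $W$. From the face-pairing presentation I would read off a triangulation (or at least the cusp shapes, the fundamental group, and the volume estimate) and match these invariants against SnapPy's census data for $s782$. The main obstacle I expect is the Poincar\'e polyhedron verification: identifying exactly which words' isometric spheres are visible on $\partial F$, and checking all edge cycle conditions and the tangency/intersection patterns of Cygan spheres, is the delicate combinatorial computation that the rest of the paper must execute carefully.
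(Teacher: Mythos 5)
Your overall skeleton coincides with the paper's: normalize the mirrors to get a one-real-parameter family (the paper uses $\theta\in[0,\pi/2)$ and shows $W=I_1I_3I_2I_3$ is nonelliptic exactly for $\theta\in[0,\pi/3]$, parabolic at $\theta=\pi/3$), build a Ford domain for $\Gamma=\langle S,T\rangle$ centered at the fixed point $q_\infty$ of $T=I_2I_1$, apply the Poincar\'e polyhedron theorem, and then analyze the ideal boundary at the critical parameter. The one genuine structural divergence is your deformation step: the paper does \emph{not} prove discreteness only at $\theta=\pi/3$ and then push outward by continuity. Instead it proves every intersection/disjointness estimate for the isometric spheres of $S^{\pm1}$, $S^2$, $(S^{-1}T)^2$ and their $T$-conjugates uniformly in $\theta\in[0,\pi/3]$, so the Poincar\'e theorem applies in one pass to the whole range; the critical parameter is precisely the degenerate endpoint where extra tangencies appear. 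Your proposed perturbation argument is not obviously cheaper: openness of the correct combinatorics is easy, but ruling out a new tangency appearing somewhere in the interior of the parameter interval is a closedness statement that forces essentially the same global inequalities the paper proves directly.

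Three concrete points would derail your write-up as stated. First, your claim that in the elliptic range $W$ has ``non-integer rotation angle'' is false: Wyss-Gallifent exhibited parameters there where $W$ is regular elliptic of finite order and the group is discrete; the ``only if'' direction must therefore split into the finite-order case (discrete but unfaithful, since $\sigma_1\sigma_3\sigma_2\sigma_3$ has infinite order in $\Delta_{4,4,\infty}$) and the infinite-order case (non-discrete). Second, the cusp cross-sections of $\Omega/\Gamma$ do \emph{not} come from $\mathbb{Z}^2$ parabolic stabilizers: the stabilizer of $q_\infty$ in $\Gamma$ is the cyclic group $\langle T\rangle$, and the torus arises as the quotient by $\mathbb{Z}$ of the boundary of a horotube $S^1\times\mathbb{R}\times\mathbb{R}_{\geq0}$ (likewise for the fixed point of $(T^{-1}S^2)^2$ at the second cusp); importing the real-hyperbolic $\mathbb{Z}^2$ picture here is an error. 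Third, you cannot match $\Omega/\Gamma$ against SnapPy by ``cusp shapes and volume'': the quotient carries a spherical CR structure, not a hyperbolic metric, so those invariants are unavailable. The paper identifies the manifold by reading a presentation of $\pi_1(\Omega/\Gamma)$ off the face-pairings of the glued polyhedron $\mathcal{P}$, producing an explicit isomorphism with $\pi_1(s782)$ via Magma, and then upgrading the $\pi_1$-isomorphism to a homeomorphism using prime decomposition together with the Poincar\'e conjecture --- a step your proposal omits and which is not automatic for an open $3$-manifold.
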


In \cite{wyss}, Wyss-Gallifent studied the complex hyperbolic $(4,4,\infty)$ triangle groups. He discovered several discrete groups with $I_1I_3I_2I_3$ being regular elliptic of finite order and conjectured that there should be countable infinity many. Thus, it should be very interesting to know what is the manifold at infinity for the group with $I_1I_3I_2I_3$ being regular elliptic of finite order. Motivated by the work of Acosta \cite{acosta}, we guess that the manifold is the Dehn surgery of the hyperbolic 3-manifold $s782$ on one cusp. We will treat this problem in another paper.

Our method is to construct Ford domains for the triangle groups acting on $\hc$. The space of complex hyperbolic $(4,4,\infty)$ triangle groups $\langle I_1, I_2, I_3 \rangle$ is parameterized by the angular $\theta\in[0,\pi/2)$ (See Section \ref{sec:parameter}).
Let $S=I_2I_3$, $T=I_2I_1$ and $\Gamma=\langle S, T \rangle$. Here $S$ is regular elliptic of order 4, and $T$ is parabolic fixing the point at infinity.
For each group in the parameter space, the Ford domain $D$ is the intersection of the closures of the exteriors of the isometric spheres for the elements $S$, $S^{-1}$, $S^2$,  $(S^{-1}T)^2$ and their conjugations by the powers of $T$. The combination of $D$ is the same except when $I_1I_3I_2I_3$ is parabolic, in which case there are additional parabolic fixed points. $D$ is preserved by the subgroup $\langle T \rangle$ and is a fundamental domain for the cosets of $\langle T \rangle$ in $\Gamma$. Its ideal boundary $\partial_{\infty}D$ is the complement of a tubular neighborhood of the $T$-invariant $\mathbb{R}$-circle (or horotube defined in \cite{schwartz-cr}). By intersecting $\partial_{\infty}D$ with a fundamental domain for $\langle T \rangle$ acting on $\partial\hc$, we obtain a fundamental domain for $\Gamma$  acting on its discontinuity region. See section \ref{sec:ford}.

When $I_1I_3I_2I_3$ is parabolic, that is $\theta=\pi/3$, by studying the combinatorial properties of the fundamental domain for $\Gamma$ acting on its discontinuity region $\Omega(\Gamma)$, we prove that the quotient $\Omega(\Gamma)/\Gamma$ is homeomorphic to the two-cusped hyperbolic 3-manifold $s782$. In this case, there are four additional parabolic fixed points fixed by $T^{-1}S^2$, $S^2T^{-1}$, $ST^{-1}S$ and $T^{-1}ST^{-1}ST$, except the point at infinity which is the fixed point of $T$. See Section \ref{sec:manifold}.

\subsection*{Acknowledgements\label{ackowledgements}} We  thank Jiming Ma for his help in the proof of Theorem \ref{thm:s782}. The third author also would like to thank Jiming Ma  for numerous helpful discussions on  complex hyperbolic geometry  during his visiting at Fudan University.

\section{Background}\label{sec:background}
The purpose of this section is to introduce briefly complex hyperbolic geometry. One can refer to Goldman's book \cite{Go} for more details.

\subsection{Complex hyperbolic plane}

Let $\langle {\bf{z}}, {\bf{w}} \rangle={\bf{w}^{*}}H{\bf{z}}$ be the Hermitian form on ${\mathbb{C}}^3$ associated to $H$, where $H$ is the Hermitian matrix
$$
H=\left[
  \begin{array}{ccc}
    0 & 0 & 1 \\
    0 & 1 & 0 \\
    1 & 0 & 0 \\
  \end{array}
\right].
$$
Then ${\mathbb{C}}^3$ is the union of negative cone $V_{-}$, null cone $V_{0}$ and positive cone $V_{+}$, where
\begin{eqnarray*}
  V_{-} &=& \left\{ {\bf{z}}\in {\mathbb{C}}^3-\{0\} : \langle {\bf{z}}, {\bf{z}} \rangle <0 \right\}, \\
  V_{0} &=& \left\{ {\bf{z}}\in {\mathbb{C}}^3-\{0\} : \langle {\bf{z}}, {\bf{z}} \rangle =0 \right\}, \\
  V_{+} &=& \left\{ {\bf{z}}\in {\mathbb{C}}^3-\{0\} : \langle {\bf{z}}, {\bf{z}} \rangle >0 \right\}.
\end{eqnarray*}

\begin{defn}
Let $P: {\mathbb{C}}^3-\{0\} \rightarrow {\mathbb{C}P^2}$ be the projectivization map.
Then the \emph{complex hyperbolic plane} $\hc$ is defined to be $P(V_{-})$ and its {boundary} $\partial \hc$ is defined to be $P(V_{0})$.
This is the \emph{Siegel domain model} of $\hc$.
Let $d(u,v)$ be the distance between two points $u,v \in \hc$.
Then the \emph{Bergman metric} on complex hyperbolic plane is given by the distance formula
$$
\cosh^2\left(\frac{d(u,v)}{2}\right)=\frac{\langle {\bf{u}}, {\bf{v}} \rangle\langle {\bf{v}}, {\bf{u}} \rangle}{\langle {\bf{u}}, {\bf{u}} \rangle \langle {\bf{v}}, {\bf{v}} \rangle},
$$
where ${\bf{u}}, {\bf{v}} \in {\mathbb{C}}^3$ are lifts of $u,v$.
\end{defn}

There is another model of $\hc$.
\begin{defn}\label{def:cayley}
The \emph{ball model} of $\hc$ is the unit ball in $\mathbb{C}^2$, which is given by the Hermition matrix $J=diag(1,1,-1)$. In this model, $\partial\hc$ is then the 3-dimensional sphere $S^3\subset \mathbb{C}^2$.
The \emph{Cayley transform} $C$ is given by
$$
C=\frac{1}{\sqrt{2}}\left(
    \begin{array}{ccc}
      1 & 0 & 1 \\
      0 & \sqrt{2} & 0 \\
      1 & 0 & -1 \\
    \end{array}
  \right).
$$
It satisfies $C^{*}HC=J$  and interchanges the Siegel domain model and the ball model of $\hc$.
\end{defn}

Let $\mathcal{N}=\mathbb{C}\times \mathbb{R}$ be the Heisenberg group with product
$$
[z,t]\cdot [\zeta,\nu]=[z+\zeta,t+\nu-2{\rm{Im}}(\bar{z}\zeta)].
$$
Then, in the Siegel domain model of $\hc$, the boundary of complex hyperbolic plane $\partial \hc$ can be identified to the union $\mathcal{N}\cup \{q_{\infty}\}$, where $q_{\infty}$ is the point at infinity.
The \emph{standard lift} of $q_{\infty}$ and $q=[z,t]\in\mathcal{N}$ in $\mathbb{C}^3$ are
\begin{equation}\label{eq:lift}
  {\bf{q}_{\infty}}=\left[
                    \begin{array}{c}
                      1 \\
                      0 \\
                      0 \\
                    \end{array}
                  \right],\quad
{\bf{q}}=\left[
           \begin{array}{c}
             \frac{-|z|^2+it}{2} \\
             z \\
             1 \\
           \end{array}
         \right].
\end{equation}
The closure of complex hyperbolic plane $\hc \cup \partial \hc$ can be identified to the union of ${\mathcal{N}}\times{\mathbb{R}_{\geq 0}}$ and $\{q_{\infty}\}$.
Any point $q=(z,t,u)\in{\mathcal{N}}\times{\mathbb{R}_{\geq0}}$ has the standard lift
$$
{\bf{q}}=\left[
           \begin{array}{c}
             \frac{-|z|^2-u+it}{2} \\
             z \\
             1 \\
           \end{array}
         \right].
$$
Here $(z,t,u)$ is called the\emph{ horospherical coordinates} of $\hc \cup \partial \hc$.

\begin{defn}
The \emph{Cygan metric} $d_{\textrm{Cyg}}$ on $\partial \hc -\{q_{\infty}\}$ is defined to be
\begin{equation}\label{eq:cygan-metric}
  d_{\textrm{Cyg}}(p,q)=|2\langle {\bf{p}}, {\bf{q}} \rangle|^{1/2}=\left| |z-w|^2-i(t-s+2{\rm{Im}}(z\bar{w})) \right|^{1/2},
\end{equation}
where $p=[z,t]$ and $q=[w,s]$.

The Cygan metric satisfies the properties of a distance.
The \emph{extended Cygan metric} on $\hc$ is given by the formula
\begin{equation}\label{eq:cygan-metric-extend}
  d_{\textrm{Cyg}}(p,q)=\left| |z-w|^2+|u-v|-i(t-s+2{\rm{Im}}(z\bar{w})) \right|^{1/2},
\end{equation}
where $p=(z,t,u)$ and $q=(w,s,v)$.
\end{defn}
 The formula $d_{\textrm{Cyg}}(p,q)=|2\langle {\bf{p}}, {\bf{q}} \rangle|^{1/2}$ remains valid even if one of $p$ and $q$ lies on $\partial \hc$.
A\emph{ Cygan sphere} is  a sphere for the extended Cygan distance.

There are two kinds of 2-dimensional totally real totally geodesic subspaces of $\hc$: complex lines and Lagrangian planes.

\begin{defn}
Let $\textbf{v}^{\perp}$ be the orthogonal space of $\textbf{v}\in V_{+}$ with respect to the Hermitian form. The intersection of the projective line $P(\textbf{v}^{\perp})$ with $\hc$ is called a \emph{complex line}. The vector $\textbf{v}$ is its \emph{polar vector}.
\end{defn}

The ideal boundary of a complex line on $\partial\hc$ is called a \emph{$\mathbb{C}$-circle}. In the Heisenberg group, $\mathbb{C}$-circles are either vertical lines or ellipses whose projections on the $z$-plane are circles.

Let $\mathbf{H}^2_{\mathbb{R}}=\{ (x_1, x_2) \in \hc : x_1, x_2 \in \mathbb{R} \}$ be the set of real points. $\mathbf{H}^2_{\mathbb{R}}$ is a Lagrangian plane.
All the Lagrangian planes are the images of $\mathbf{H}^2_{\mathbb{R}}$ by isometries of $\hc$. The ideal boundary of a Lagrangian plane is called a \emph{$\mathbb{R}$-circle}. In the Heisenberg group, $\mathbb{R}$-circles are either straight lines or lemniscate curves whose projections on the $z$-plane are figure eight.

\subsection{Isometries}

Let ${\rm{SU}}(2,1)$ be the special unitary matrix preserving the Hermitian form. Then the projective unitary group $\pu={\rm{SU}}(2,1)/\{I, \omega I, \omega^2 I\}$ is the holomorphic isometry group of $\hc$, where $\omega=(-1+i\sqrt{3})/2$ is a primite cubic root of unit.
Observe that complex conjugation also preserves the Hermitian form. Thus the full isometry group of $\hc$ is generated by $\pu$ and complex conjugation.

\begin{defn}
Any isometry $g\in\pu$ is \emph{loxodromic} if it has exactly two fixed points on $\partial \hc$. $g$ is \emph{parabolic} if it has exactly one fixed point on $\partial \hc$.
$g$ is \emph{elliptic} if it has at least one fixed point in $\hc$.
\end{defn}

The types of isometries can be determined by the traces of their matrix realizations, see Theorem 6.2.4 of Goldman \cite{Go}. Now suppose that $A\in{\rm{SU}}(2,1)$ has real trace. Then $A$ is elliptic if $-1\leq{\rm{tr}(A)}<3$.
Moreover, $A$ is unipotent if ${\rm{tr}(A)}=3$. In particular, if ${\rm{tr}(A)}=-1,0,1$, $A$ is elliptic of order 2, 3, 4 respectively.

There is a special class of elliptic elements of order two.
\begin{defn}
The \emph{complex involution} on complex line $C$ with polar vector ${\bf{n}}$ is given by the following formula:
\begin{equation}\label{eq:involution}
  I_{C}({\bf{z}})=-{\bf{z}}+\frac{2\langle {\bf{z}}, {\bf{n}} \rangle}{\langle {\bf{n}}, {\bf{n}} \rangle} {\bf{n}}.
\end{equation}
It is obvious that $I_{C}$ is a holomorphic isometry fixing the complex line $C$.
\end{defn}

There is a special class of unipotent elements in $\pu$.
\begin{defn}
A left \emph{Heisenberg translation} associated to $[z,t]\in\mathcal{N}$ is given by the matrix
\begin{equation}\label{eq:translation}
  T_{[z,t]}=\left[
            \begin{array}{ccc}
              1 & -\bar{z} & \frac{-|z|^2+it}{2} \\
              0 & 1 & z \\
              0 & 0 & 1 \\
            \end{array}
          \right].
\end{equation}
It is obvious that $T_{[z,t]}$ fixes $q_{\infty}$ and maps $[0,0]\in\mathcal{N}$ to $[z,t]$.
\end{defn}

\subsection{Isometric spheres and Ford polyhedron}

Suppose that $g=(g_{ij})^3_{i,j=1}\in\pu$ does not fix $q_{\infty}$. Then it is obvious that $g_{31}\neq 0$.
We first recall the definition of isometric spheres and relevant properties; see   for instance\cite{par}.
\begin{defn}
The \emph{isometric sphere} of $g$, denoted by $\mathcal{I}(g)$, is the set
\begin{equation}\label{eq:isom-sphere}
  \mathcal{I}(g)=\{ p \in {\hc \cup \partial\hc} : |\langle {\bf{p}}, {\bf{q}}_{\infty} \rangle | = |\langle {\bf{p}}, g^{-1}({\bf{q}}_{\infty}) \rangle| \}.
\end{equation}
\end{defn}
The isometric sphere $\mathcal{I}(g)$ is the Cygan sphere with center
$$g^{-1}({\bf{q}}_{\infty})=\left[{\overline{g_{32}}}/{\overline{g_{31}}},2{\rm{Im}}({\overline{g_{33}}}/{\overline{g_{31}}})\right]$$
and radius $r_g=\sqrt{{2}/{|g_{31}|}}$.

The \emph{exterior} of $\mathcal{I}(g)$ is the set
\begin{equation}\label{eq:exterior}
  \{ p \in {\hc \cup \partial\hc} : |\langle {\bf{p}}, {\bf{q}}_{\infty} \rangle | > |\langle {\bf{p}}, g^{-1}({\bf{q}}_{\infty}) \rangle| \}.
\end{equation}
The \emph{interior} of $\mathcal{I}(g)$ is the set
\begin{equation}\label{eq:interior}
  \{ p \in {\hc \cup \partial\hc} : |\langle {\bf{p}}, {\bf{q}}_{\infty} \rangle | < |\langle {\bf{p}}, g^{-1}({\bf{q}}_{\infty}) \rangle| \}.
\end{equation}

The isometric spheres are paired as the following.
\begin{lem}[ \cite{Go}, Section 5.4.5]\label{lem:goldman}
Let $g$ be an element in $\pu$ which does not fix $q_{\infty}$. Then $g$ maps $\mathcal{I}(g)$ to $\mathcal{I}(g^{-1})$,
and the exterior of $\mathcal{I}(g)$ to the interior of $\mathcal{I}(g^{-1})$.
Besides, for any unipotent transformation $h\in\pu$ fixing $q_{\infty}$, we have $\mathcal{I}(g)=\mathcal{I}(hg)$.
\end{lem}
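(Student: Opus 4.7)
The plan is to prove all three assertions by direct manipulation of the defining equation
$$
\mathcal{I}(g)=\{p:|\langle \mathbf{p},\mathbf{q}_\infty\rangle|=|\langle \mathbf{p},g^{-1}\mathbf{q}_\infty\rangle|\},
$$
using only the fact that any lift of $g\in\pu$ to ${\rm SU}(2,1)$ preserves the Hermitian form: $\langle g\mathbf{a},g\mathbf{b}\rangle=\langle \mathbf{a},\mathbf{b}\rangle$ for all $\mathbf{a},\mathbf{b}\in\mathbb{C}^3$. Because all three claims are statements about absolute values of Hermitian pairings, the entire argument reduces to two short identities.

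For the first two assertions, I would start from an arbitrary point $p$ and evaluate the two Hermitian pairings at $g(p)$ that appear in the definition of $\mathcal{I}(g^{-1})$. Using invariance, one obtains
$$
\langle g\mathbf{p},g\mathbf{q}_\infty\rangle=\langle \mathbf{p},\mathbf{q}_\infty\rangle,\qquad
\langle g\mathbf{p},\mathbf{q}_\infty\rangle=\langle g\mathbf{p},g(g^{-1}\mathbf{q}_\infty)\rangle=\langle \mathbf{p},g^{-1}\mathbf{q}_\infty\rangle.
$$
Comparing absolute values shows that the defining equality $|\langle g\mathbf{p},\mathbf{q}_\infty\rangle|=|\langle g\mathbf{p},g\mathbf{q}_\infty\rangle|$ for membership of $g(p)$ in $\mathcal{I}(g^{-1})$ is exactly the equality $|\langle \mathbf{p},g^{-1}\mathbf{q}_\infty\rangle|=|\langle \mathbf{p},\mathbf{q}_\infty\rangle|$ characterising $p\in\mathcal{I}(g)$; this yields $g(\mathcal{I}(g))=\mathcal{I}(g^{-1})$. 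Replacing these equalities by the corresponding strict inequalities, and noting that $|\langle g\mathbf{p},\mathbf{q}_\infty\rangle|>|\langle g\mathbf{p},g\mathbf{q}_\infty\rangle|$ translates into $|\langle \mathbf{p},g^{-1}\mathbf{q}_\infty\rangle|>|\langle \mathbf{p},\mathbf{q}_\infty\rangle|$, gives the reversal: the exterior of $\mathcal{I}(g)$ is sent to the interior of $\mathcal{I}(g^{-1})$.

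For the third assertion, the key observation is that a unipotent $h\in\pu$ fixing $q_\infty$ actually fixes the standard lift $\mathbf{q}_\infty$ as a vector, not merely projectively. Indeed, in the basis adapted to the Siegel form in which $\mathbf{q}_\infty=(1,0,0)^{T}$, any stabiliser of $q_\infty$ is upper triangular with diagonal entry $\lambda$ at the top, and unipotence forces $\lambda=1$, so $h\mathbf{q}_\infty=\mathbf{q}_\infty$ and therefore $(hg)^{-1}\mathbf{q}_\infty=g^{-1}h^{-1}\mathbf{q}_\infty=g^{-1}\mathbf{q}_\infty$. Plugging this into the defining equation gives $\mathcal{I}(hg)=\mathcal{I}(g)$ immediately. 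The only place where there is anything subtle is verifying this lift-level fixation, so that small linear-algebra check is the main (and only) obstacle; everything else is a one-line application of the invariance $\langle g\mathbf{a},g\mathbf{b}\rangle=\langle \mathbf{a},\mathbf{b}\rangle$.
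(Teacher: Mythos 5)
Your proof is correct, and it is the standard argument: the paper itself gives no proof of this lemma, citing Goldman's book (Section 5.4.5) instead, and the computation you give --- invariance of the Hermitian form yielding $|\langle g\mathbf{p},g\mathbf{q}_\infty\rangle|=|\langle\mathbf{p},\mathbf{q}_\infty\rangle|$ and $|\langle g\mathbf{p},\mathbf{q}_\infty\rangle|=|\langle\mathbf{p},g^{-1}\mathbf{q}_\infty\rangle|$, plus the observation that a unipotent stabiliser of $q_\infty$ fixes the lift $\mathbf{q}_\infty$ on the nose --- is exactly the argument in the cited reference. You correctly isolate the one genuinely necessary check, namely that $h\mathbf{q}_\infty=\mathbf{q}_\infty$ at the level of vectors (unit modulus of the scalar would already suffice, and unipotence gives the scalar equal to $1$), so nothing is missing.
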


Since isometric spheres are Cygan spheres, we now recall some facts about Cygan spheres.
Let $S_{[0,0]}(r)$ be the Cygan sphere with center $[0,0]$ and radius $r>0$. Then
\begin{equation}\label{eq:cygan-sphere}
  S_{[0,0]}(r)=\left\{ (z,t,u)\in\hc \cup \partial\hc: (|z|^2+u)^2+t^2=r^4 \right\}.
\end{equation}

The geographic coordinates for Cygan sphere will play an important role in our calculation;  see Section 2.5 of \cite{par-will2}.
\begin{defn}
The \emph{geographic coordinates} $(\alpha,\beta,w)$ of $q=q(\alpha,\beta,w)\in S_{[0,0]}(r)$ is given by the lift
\begin{equation}\label{eq:geog-coor}
  {\bf{q}}={\bf q}(\alpha,\beta,w)=\left[
           \begin{array}{c}
             -r^2e^{-i\alpha}/2 \\
             rwe^{i(-\alpha/2+\beta)} \\
             1 \\
           \end{array}
         \right],
\end{equation}
where $\alpha\in [-\pi/2,\pi/2]$, $\beta\in [0, \pi)$ and $w\in [-\sqrt{\cos(\alpha)},\sqrt{\cos(\alpha)}]$.
The ideal boundary of $S_{[0,0]}(r)$ on $\partial\hc$ are the points with $w=\pm\sqrt{\cos(\alpha)}$.
\end{defn}

We are interested in the intersection of Cygan spheres.
\begin{prop}[  \cite{par-will2}, Proposition 2.10]\label{prop:connect}
The intersection of two Cygan spheres is connected.
\end{prop}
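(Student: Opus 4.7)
The plan is to bring one Cygan sphere to a normal form via Cygan isometries and then analyze the intersection explicitly in the geographic coordinates introduced in \eqref{eq:geog-coor}. Since Heisenberg translations and vertical-axis rotations are elements of $\pu$, and since Cygan dilations $(z,t,u)\mapsto(\lambda z,\lambda^2 t,\lambda^2 u)$ merely rescale the Cygan metric, we may assume $S_1=S_{[0,0]}(1)$ is centered at the origin with radius $1$, and that the center $q_2$ of the second sphere $S_2$ has horospherical coordinates $(x_0,t_0,u_0)$ with $x_0\geq 0$. Parametrize $S_1$ by $q(\alpha,\beta,w)$ using geographic coordinates.

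Using the identity $d_{\textrm{Cyg}}(p,q)^2=|2\langle\mathbf{p},\mathbf{q}\rangle|$, a direct computation yields
\begin{equation*}
2\langle\mathbf{q}(\alpha,\beta,w),\mathbf{q}_2\rangle=A(\alpha)+B(\alpha,\beta)\,w,
\end{equation*}
an affine-linear expression in $w$ with $|B|^2=4x_0^2$ independent of $\alpha$ and $\beta$. Geometrically, for fixed $(\alpha,\beta)$ the map $w\mapsto A+Bw$ traces a real line in $\mathbb{C}$, and the condition $|A+Bw|=r_2^2$ says that $A+Bw$ lies on the circle of radius $r_2^2$, giving at most two real solutions in $w$. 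Equivalently, $S_1\cap S_2$ is defined by the real quadratic equation
\begin{equation*}
|B|^2 w^2+2\,\textrm{Re}\bigl(A\overline{B}\bigr)\,w+|A|^2-r_2^4=0,
\end{equation*}
together with the admissibility constraints $w\in[-\sqrt{\cos\alpha},\sqrt{\cos\alpha}]$, $\alpha\in[-\pi/2,\pi/2]$, $\beta\in[0,\pi)$.

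For each fixed $\alpha$ the two branches $w_\pm(\alpha,\beta)$ of this quadratic coincide along the discriminant locus and glue together into a single connected closed curve in the $(\beta,w)$-disc, or into an arc terminating on the boundary $|w|=\sqrt{\cos\alpha}$ (which corresponds to the intersection on $\partial\hc$). As $\alpha$ varies continuously, these slices sweep out $S_1\cap S_2$. The degenerate axisymmetric case $x_0=0$ (when $B\equiv 0$) reduces to a one-variable condition on $\alpha$ alone, producing full $\alpha$-slices that are themselves discs.

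The main obstacle I anticipate is the case analysis at the boundaries: verifying that the intersection on $\partial\hc$ (the intersection of two spinal spheres) glues continuously with the interior intersection, and that no disconnected components arise at values of $\alpha$ where bifurcations occur in the number of real roots, or at the degenerate endpoints $\alpha=\pm\pi/2$ where the $w$-interval collapses. A cleaner alternative would be a Morse-theoretic argument: show that $p\mapsto d_{\textrm{Cyg}}(p,q_2)^2$, regarded as a real-valued function on the topological $3$-sphere $S_1$, has exactly two critical points (a minimum and a maximum), from which connectedness of every nonempty level set would follow immediately.
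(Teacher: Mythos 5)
The first thing to say is that the paper does not prove this statement at all: it is quoted directly from Parker--Will \cite{par-will2}, Proposition~2.10, where the intersection is moreover identified as a Giraud disc. So your attempt is being measured against the literature rather than against an argument appearing in this paper.

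Your normalization and the key computation are correct and are exactly the right starting point: with $S_1=S_{[0,0]}(1)$ in geographic coordinates and the second centre rotated onto the positive real axis, one indeed gets $2\langle\mathbf{q}(\alpha,\beta,w),\mathbf{q}_2\rangle=A(\alpha)+B(\alpha,\beta)w$ with $|B|=2x_0$, so each $(\alpha,\beta)$-fibre meets $S_2$ in at most two points. This Giraud-type linearity in $w$ is the standard engine behind such results. The gap is that the passage from ``a quadratic with at most two roots on each fibre'' to ``the total solution set is connected'' is precisely the content of the proposition, and your proposal asserts it rather than proves it. Concretely: (i) within a fixed $\alpha$-slice, the admissible set of $\beta$ (where the discriminant is nonnegative \emph{and} the roots lie in $[-\sqrt{\cos\alpha},\sqrt{\cos\alpha}]$) could a priori be a disjoint union of several intervals, producing several arcs in a single slice; (ii) even granting connectedness of each slice, you must show that the set of $\alpha$ with nonempty slice is an interval and that the arcs vary without splitting or merging as $\alpha$ moves --- neither point is addressed, and the connectedness genuinely uses that the second centre is an ideal point, so it cannot be a purely formal consequence of the fibrewise quadratic structure. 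Your Morse-theoretic fallback also contains a topological slip: a Cygan sphere in $\hc\cup\partial\hc$ is a topological $3$-ball whose boundary is the spinal sphere on $\partial\hc$ (consistently, the intersection in question is a disc, not a $2$-sphere), so the implication ``two critical points on a closed $3$-manifold gives connected level sets'' does not apply; on a manifold with boundary you would additionally have to control the critical points of the restriction of $d_{\mathrm{Cyg}}(\cdot,q_2)^2$ to the spinal sphere. As written, the proposal sets up the correct coordinates but leaves the actual connectedness argument unproved.
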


\begin{rem} This intersection is often called \emph{Giraud disk}.
\end{rem}
The following property should be useful to describe the intersection of Cygan spheres.
\begin{prop}[\cite{par-will2}, Proposition 2.12]
Let $S_{[0,0]}(r)$ be a Cygan sphere with {geographic coordinates} $(\alpha,\beta,w)$.
\begin{enumerate}
  \item The level sets of $\alpha$ are complex lines, called slices of $S_{[0,0]}(r)$.
  \item The level sets of $\beta$ are Lagrangian planes, called meridians of $S_{[0,0]}(r)$.
  \item The set of points with $w=0$ is the spine of $S_{[0,0]}(r)$. It is a geodesic contained in every meridian.
\end{enumerate}
\end{prop}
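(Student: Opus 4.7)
For part (1), I would fix $\alpha_0\in[-\pi/2,\pi/2]$ and propose as polar vector $\mathbf{n}_{\alpha_0}=(r^2e^{i\alpha_0}/2,\,0,\,1)^T$. Two short calculations with $\langle\mathbf{z},\mathbf{w}\rangle=\mathbf{w}^*H\mathbf{z}$ give $\langle\mathbf{n}_{\alpha_0},\mathbf{n}_{\alpha_0}\rangle=r^2\cos\alpha_0>0$ (so $\mathbf{n}_{\alpha_0}\in V_+$) and $\langle\mathbf{q}(\alpha_0,\beta,w),\mathbf{n}_{\alpha_0}\rangle=0$ for every $\beta,w$, placing the slice $\{\alpha=\alpha_0\}$ inside the complex line $L_{\alpha_0}:=P(\mathbf{n}_{\alpha_0}^\perp)\cap(\hc\cup\partial\hc)$. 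Converting $\mathbf{q}(\alpha_0,\beta,w)$ to horospherical coordinates via (\ref{eq:lift}) shows in addition that $L_{\alpha_0}$ itself lies in $S_{[0,0]}(r)$ and that $(\beta,w)$ parameterizes it surjectively, so the slice equals $L_{\alpha_0}$.

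For part (2), I would first reduce to the case $\beta=0$ using the one-parameter family $g_\eta=\mathrm{diag}(1,e^{i\eta},1)$, which one checks lies in $U(2,1)$ (that is, $g_\eta^*Hg_\eta=H$) and satisfies $g_\eta\cdot\mathbf{q}(\alpha,\beta,w)=\mathbf{q}(\alpha,\beta+\eta,w)$. For the base case I would produce the antiholomorphic involution $\sigma(\mathbf{z})=A\bar{\mathbf{z}}$ with
\[
A=\begin{pmatrix} 0 & 0 & 1 \\ 0 & -2/r^2 & 0 \\ 4/r^4 & 0 & 0 \end{pmatrix},
\]
and verify three identities by direct computation: $A\bar{A}=(4/r^4)I$ (so $\sigma^2$ is projectively trivial), $\bar{A}^T H A=(4/r^4)H$ (so $\sigma$ preserves $V_-,V_0,V_+$ and is an antiholomorphic isometry), and $A\,\overline{\mathbf{q}(\alpha,0,w)}=-(2/r^2)e^{i\alpha}\,\mathbf{q}(\alpha,0,w)$ (so the $\beta=0$ meridian is pointwise fixed projectively). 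Since the fixed locus of an antiholomorphic isometric involution of the K\"ahler manifold $\hc$ is a Lagrangian plane, the meridian lies in such a plane, and a real dimension count together with the range constraint $|w|\leq\sqrt{\cos\alpha}$ identifies it with the portion of that Lagrangian contained in $\hc\cup\partial\hc$.

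For part (3), setting $w=0$ in (\ref{eq:geog-coor}) yields $\mathbf{q}(\alpha,\beta,0)=(-r^2e^{-i\alpha}/2,\,0,\,1)^T$, which is independent of $\beta$, so the locus $\{w=0\}$ is automatically contained in every meridian. Its horospherical coordinates are $(0,r^2\sin\alpha,r^2\cos\alpha)$, so it lies in the totally geodesic complex line $L=\{z_2=0\}$ (polar to $(0,1,0)^T$); a direct calculation shows that the restricted Bergman metric on $L$ is proportional to $(du^2+dt^2)/u^2$, the Poincar\'e metric on the upper half plane with coordinates $(t,u)$. In these coordinates the spine is the Euclidean semicircle $t^2+u^2=r^4$ centered on the boundary $\{u=0\}$, which is therefore a geodesic.

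The main obstacle I anticipate is part (2): pinning down the correct antilinear $A$ forces the various $e^{\pm i\alpha/2}$ phases in the middle coordinate to cancel simultaneously with the outer coordinates, which dictates both the antidiagonal structure of $A$ and the specific constants $-2/r^2$ and $4/r^4$; there is also some care needed to match the parameter domain in $(\alpha,w)$ with exactly the portion of the Lagrangian plane that sits inside the closed Cygan ball. Parts (1) and (3) should then reduce to unwinding the coordinate formula (\ref{eq:geog-coor}).
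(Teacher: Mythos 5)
The paper does not actually prove this proposition; it is quoted verbatim from Parker--Will and used as a black box, so there is no in-paper argument to compare against. Judged on its own terms, your proof is correct: I checked the key identities and they all hold. With $\langle\mathbf{z},\mathbf{w}\rangle=\mathbf{w}^{*}H\mathbf{z}$ one indeed gets $\langle\mathbf{n}_{\alpha_0},\mathbf{n}_{\alpha_0}\rangle=r^{2}\cos\alpha_0$ and $\langle\mathbf{q}(\alpha_0,\beta,w),\mathbf{n}_{\alpha_0}\rangle=0$, and the condition $\langle\mathbf{z},\mathbf{z}\rangle\le 0$ on the line $z_1=-r^{2}e^{-i\alpha_0}z_3/2$ is exactly $|z_2|\le r\sqrt{\cos\alpha_0}$, so the slice is the full closed complex line and lies on the Cygan sphere since $(|z|^{2}+u)^{2}+t^{2}=r^{4}$ is automatic. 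For (2), $g_\eta^{*}Hg_\eta=H$, $A^{2}=(4/r^{4})I$, $A^{T}HA=(4/r^{4})H$ and $A\overline{\mathbf{q}(\alpha,0,w)}=-(2/r^{2})e^{i\alpha}\mathbf{q}(\alpha,0,w)$ all check out, and the standard fact that a nonempty fixed locus of an antiholomorphic isometric involution of $\hc$ is a Lagrangian plane, combined with an open-and-closed argument in the parameter domain, finishes the identification. For (3), the spine has horospherical coordinates $(0,r^{2}\sin\alpha,r^{2}\cos\alpha)$ and is the semicircle $t^{2}+u^{2}=r^{4}$ in the half-plane model of the complex line $\{z=0\}$, hence a geodesic. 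Your route differs from the source's in spirit: Parker--Will obtain this from the classical slice and meridian decompositions of bisectors (the Cygan sphere being an isometric sphere, hence a bisector with the given spine), whereas you verify everything by direct computation from (\ref{eq:geog-coor}); your version is more elementary and self-contained, at the cost of not exposing the general bisector structure. Two trivial caveats you may wish to flag: at $\alpha_0=\pm\pi/2$ the vector $\mathbf{n}_{\alpha_0}$ becomes null and the ``slice'' degenerates to a single boundary point (an endpoint of the spine), and in part (2) one should note that $\beta+\eta$ is taken modulo $\pi$ with a compensating sign on $w$, which does not change the level set.
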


A central work in this paper is to construct a polyhedron for a finitely generated subgroup of $\pu$.
\begin{defn}
Let $G$ be a discrete subgroup of $\pu$. The \emph{Ford polyhedron} $D_{G}$ for $G$ is the set
$$
D_{G}=\left\{ p\in{\hc\cup\partial\hc} : |\langle {\bf{p}}, {\bf{q}}_{\infty} \rangle | \geq |\langle {\bf{p}}, g^{-1}{\bf{q}}_{\infty} \rangle| ~\textrm{for all}~g\in G~\textrm{with}~g(q_{\infty})\neq q_{\infty} \right\}.
$$
\end{defn}
That is to say $D_{G}$ is the intersection of closures of the exteriors of all the isometric spheres for elements of $G$ which do not fix $q_{\infty}$.
In fact, Ford polyhedron is the limit of Dirichlet polyhedra as the center point goes to $q_{\infty}$.

\section{The parameter space of complex hyperbolic $(4,4,\infty)$ triangle groups}\label{sec:parameter}
In this section, we give a parameter space of the complex hyperbolic $(4,4,\infty)$ triangle groups.

Let $\theta\in[0,\pi/2)$. Let $C_1, C_2,C_3$ be three complex lines in complex hyperbolic space $\hc$ with polar vectors $\bf{n_1},\bf{n_2},\bf{n_3}$, respectively. By conjugating elements in $\pu$, one can normalize that $\partial C_3=\{ [z,0] \in\mathcal{N} : |z|=\sqrt{2}\}$. That is the circle in the $z$-plane of the Heisenberg group with center the origin and radius $\sqrt{2}$. Then, up to the complex conjugation and rotations about the $t$-axis of the Heisenberg group, the $\mathbb{C}$-circles $\partial C_1$ and $\partial C_2$ can be normalized to be the sets $\partial C_1=\{[-e^{-i\theta},t]\in\mathcal{N}: t\in\mathbb{R} \}$ and $\partial C_2=\{[e^{i\theta},t]\in\mathcal{N}: t\in\mathbb{R} \}$. That is, $\partial C_1$ (respectively $\partial C_2$) is the vertical line whose projection on the $z$-plane of the Heisenberg group is the point $-e^{-i\theta}$ (respectively $e^{i\theta}$). Thus the polar vectors of the complex lines can be written as follows
$$
\bf{n_1}=\left[
  \begin{array}{c}
    e^{i\theta} \\
    1 \\
    0 \\
  \end{array}
\right],\quad
\bf{n_2}=\left[
  \begin{array}{c}
    -e^{-i\theta} \\
    1 \\
    0 \\
  \end{array}
\right],\quad
\bf{n_3}=\left[
  \begin{array}{c}
    1 \\
    0 \\
    1 \\
  \end{array}
\right].
$$

 Note that these two $\mathbb{C}$-circles $\partial C_1$  and $\partial C_2$ coincide with each other   if $\theta=\pi/2$.
According to (\ref{eq:involution}), the complex involutions $I_1$, $I_2$ and $I_3$ on the complex lines are given as
$$
I_1=\left[
  \begin{array}{ccc}
    -1 & 2e^{i\theta} & 2 \\
    0 & 1 & 2e^{-i\theta} \\
    0 & 0 & -1 \\
  \end{array}
\right], \quad
I_2=\left[
  \begin{array}{ccc}
    -1 & -2e^{-i\theta} & 2 \\
    0 & 1 & -2e^{i\theta} \\
    0 & 0 & -1 \\
  \end{array}
\right],
$$
$$
I_3=\left[
  \begin{array}{ccc}
    0 & 0 & 1 \\
    0 & -1 & 0 \\
    1 & 0 & 0 \\
  \end{array}
\right].
$$

\begin{prop}\label{prop:trianle}
Let $\theta\in[0,\pi/2)$ and $I_1$, $I_2$, $I_3$ be defined as above. Then $\langle I_1, I_2, I_3 \rangle$ is a complex hyperbolic $(4,4,\infty)$ triangle group. Furthermore, the element $I_1I_3I_2I_3$ is nonelliptic if and only if $0\leq \theta \leq \pi/3$.
\end{prop}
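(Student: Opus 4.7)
The plan is to verify both parts of the statement by direct matrix computation followed by an appeal to the trace classification of isometries recalled in Section \ref{sec:background}.

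First I would check that $\langle I_1, I_2, I_3\rangle$ really is a $(4,4,\infty)$-triangle representation. Since each $I_i$ is constructed via formula (\ref{eq:involution}) as the complex involution on the line $C_i$, we automatically have $I_i^2=\mathrm{id}$, and it remains to identify the orders of the products $I_2 I_3$, $I_3 I_1$, $I_1 I_2$. A short matrix multiplication gives $\operatorname{tr}(I_2 I_3)=\operatorname{tr}(I_3 I_1)=1$, so by the trace criterion recalled after Definition \ref{def:cayley} each of these is elliptic of order $4$ in $\pu$. The remaining product $I_1 I_2$ works out to be upper triangular with $1$'s on the diagonal, hence has trace $3$; since its entry just above the diagonal is $4\cos\theta$ and its upper-right entry is $-4(1+e^{2i\theta})$, at least one is nonzero for every $\theta\in[0,\pi/2)$, so $I_1 I_2$ is not the identity and is therefore a genuine parabolic of infinite order. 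This verifies the $(4,4,\infty)$ relation structure.

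For the second assertion I would compute $\operatorname{tr}(I_1 I_3 I_2 I_3)$ by directly multiplying the matrices $I_1 I_3$ and $I_2 I_3$ already obtained. The trace simplifies to the real number $7+8\cos(2\theta)$. Applying the Goldman discriminant $f(\tau)=\tau^4-8\tau^3+18\tau^2-27$, which factors on the real line as $(\tau-3)^3(\tau+1)$, the element of $\pu$ is nonelliptic precisely when $\tau\ge 3$ or $\tau\le -1$. Since $\theta\in[0,\pi/2)$ forces $7+8\cos(2\theta)\in(-1,15]$, only the regime $\tau\ge 3$ is attainable; this is equivalent to $\cos(2\theta)\ge -1/2$, that is, to $\theta\in[0,\pi/3]$.

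The only subtlety — which is the ``main obstacle'' if there is one — is the boundary case $\theta=\pi/3$, where $\tau=3$: here I would still need to rule out $I_1 I_3 I_2 I_3=\mathrm{id}$ in order to confirm that the element is genuinely parabolic (and hence nonelliptic). This is handled by a single entry check: at $\theta=\pi/3$ the $(1,1)$-entry of the matrix works out to $5+4e^{2i\pi/3}=3+2i\sqrt{3}\ne 1$, so it cannot be the identity and is therefore unipotent. Beyond this, the argument is conceptually routine; the only real care required is keeping the $e^{\pm i\theta}$ bookkeeping tidy across the matrix products.
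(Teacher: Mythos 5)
Your proposal is correct and follows essentially the same route as the paper: compute the product matrices, observe that ${\rm tr}(I_2I_3)={\rm tr}(I_3I_1)=1$ so these are elliptic of order $4$ and that $I_2I_1$ is unipotent, and then classify $I_1I_3I_2I_3$ from its real trace $7+8\cos(2\theta)$ using the trace criterion recalled in Section \ref{sec:background} (your Goldman discriminant $(\tau-3)^3(\tau+1)$ is just that criterion in polynomial form). Your additional boundary checks --- that $I_1I_2$ is not the identity and that $I_1I_3I_2I_3\neq \mathrm{id}$ at $\theta=\pi/3$ --- are slightly more careful than the paper, which treats these points as evident from the displayed matrices.
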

\begin{proof}
By computing the products of two involutions, we have
$$
I_2 I_3=\left[
  \begin{array}{ccc}
    2 & 2e^{-i\theta} & -1 \\
    -2e^{i\theta} & -1 & 0 \\
    -1 & 0 & 0 \\
  \end{array}
\right],\quad
I_3 I_1=\left[
  \begin{array}{ccc}
    0 & 0 & -1 \\
    0 & -1 & -2e^{-i\theta} \\
    -1 & e^{i\theta} & 2 \\
  \end{array}
\right],
$$
$$
I_2 I_1=\left[
  \begin{array}{ccc}
    1 & -4\cos(\theta) & -4(1+e^{-2i\theta}) \\
    0 & 1 & 4\cos(\theta) \\
    0 & 0 & 1 \\
  \end{array}
\right].
$$
It is easy to verify that $I_2I_3$ and $I_3I_1$ are elliptic of order $4$ and $I_2I_1$ is unipotent.
Thus $\langle I_1, I_2, I_3 \rangle$ is a complex hyperbolic $(4,4,\infty)$ triangle group.

Since the trace of $I_1I_3I_2I_3$ is ${\rm{tr}}(I_1I_3I_2I_3)=7+8\cos(2\theta)$,
the element $I_1I_3I_2I_3$ is elliptic if and only if
$$
-1\leq {\rm{tr}}(I_1I_3I_2I_3)=7+8\cos(2\theta) <3,
$$
that is $\pi/3 <\theta \leq \pi/2$. Thus $I_1I_3I_2I_3$ is nonelliptic if and only if $0\leq \theta \leq \pi/3$.
Moreover, when $\theta=\pi/3$, the element $I_1I_3I_2I_3$ is parabolic.
\end{proof}

If $\theta=0$, then $\langle I_1, I_2, I_3 \rangle$ preserves the Lagrangian plane whose ideal boundary is the $x$-axis of the Heisenberg group.
Thus $\langle I_1, I_2, I_3 \rangle$ is obviously a discrete subgroup of $\pu$.
If $\theta=\pi/3$, we have the following corollary.
\begin{cor}
Let $\theta=\pi/3$. Then $\langle I_1, I_2, I_3 \rangle$ is a discrete subgroup of $\pu$.
\end{cor}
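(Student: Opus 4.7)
My plan is to reduce discreteness of $\langle I_1, I_2, I_3\rangle$ to discreteness of the index two \emph{even} subgroup $\Gamma=\langle S,T\rangle$ with $S=I_2I_3$ and $T=I_2I_1$, and then to establish the latter via the Ford domain construction promised for Section \ref{sec:ford}. From Proposition \ref{prop:trianle} we already know the generators: $S$ is elliptic of order $4$, while $T$ is unipotent. Since the Ford domain $D$ is built as the intersection of the closures of the exteriors of the isometric spheres of $S$, $S^{-1}$, $S^2$, $(S^{-1}T)^2$ and their $\langle T\rangle$-conjugates, and since these isometric spheres are all well-defined (none of these elements fix $q_{\infty}$), the first step is simply to substitute $\theta=\pi/3$ into the formulas for $I_2I_3$ and $I_2I_1$ and record the explicit centers and radii of the relevant Cygan spheres.

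Next I would verify the hypotheses of the Poincar\'e polyhedron theorem (with boundary cusps, in the version adapted to complex hyperbolic geometry, as used by Parker--Will in \cite{par-will2} and Parker--Wang--Xie in \cite{pwx}): (i) the side pairings $S$, $S^2$, $(S^{-1}T)^2$ together with $T$-conjugates pair the faces of $D$ in involutive pairs or $S$-orbit cycles; (ii) the cycle transformations around each ridge compose to the identity (or to a finite-order elliptic whose order matches the geometric dihedral angle); (iii) the parabolic cusp conditions at $q_\infty$ and at the four extra parabolic fixed points (of $T^{-1}S^2$, $S^2T^{-1}$, $ST^{-1}S$, $T^{-1}ST^{-1}ST$) are satisfied, using horoball arguments with respect to the Cygan metric. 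Lemma \ref{lem:goldman} supplies the face identifications, and Propositions on geographic coordinates and connectedness of Giraud disks (Proposition \ref{prop:connect}) are the workhorses that let one check the local tessellation along each ridge. Once these are verified, the Poincar\'e theorem gives both that $D$ is a fundamental domain modulo $\langle T\rangle$ and that $\Gamma$ is discrete.

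Finally, because $\langle I_1,I_2,I_3\rangle$ contains $\Gamma$ as a subgroup of index at most two (the even subgroup on words of even length in the $I_i$), discreteness of $\Gamma$ forces discreteness of the full triangle group: any accumulating sequence in $\langle I_1,I_2,I_3\rangle$ would produce, after passing to a subsequence of a fixed parity, an accumulating sequence in $\Gamma$, contradicting what we just proved. The main obstacle I anticipate is the \emph{tessellation / cycle verification} at the value $\theta=\pi/3$, precisely because this is the degenerate parameter where $I_1I_3I_2I_3$ becomes parabolic and extra boundary parabolic fixed points enter the picture; this is exactly the delicate combinatorics highlighted in the introduction and the content of Section \ref{sec:ford}. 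The actual statement of the corollary is then just a short invocation of that apparatus.
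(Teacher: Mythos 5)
Your outline would eventually work, but it is a genuinely different — and far heavier — route than the one the paper takes for this corollary. The paper's proof is a two-line arithmeticity observation: at $\theta=\pi/3$ the matrices $I_2I_3$ and $I_3I_1$ have all entries in the ring of Eisenstein integers $\mathbb{Z}[\omega]$, so the even subgroup $\langle I_2I_3, I_3I_1\rangle$ sits inside the Eisenstein--Picard modular group $\mathrm{PU}(2,1;\mathbb{Z}[\omega])$, which is discrete; since this even subgroup has index two, $\langle I_1,I_2,I_3\rangle$ is discrete. Your proposal instead invokes the entire Ford domain and Poincar\'e polyhedron apparatus of Section \ref{sec:ford}. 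That is not circular (the Section \ref{sec:ford} argument does not rely on this corollary), and it does prove discreteness at $\theta=\pi/3$ — indeed for all $\theta\in[0,\pi/3]$, together with a presentation and a fundamental domain, which the arithmetic argument cannot give. But as a proof of \emph{this} statement it is overkill, and as written it is only a plan: the substantive verifications you correctly identify (pairwise disjointness and tangency of isometric spheres, the ridge cycles, and the consistent horoballs at the accidental parabolic fixed points at $\theta=\pi/3$) are named but not carried out, and they constitute the bulk of the paper. If you want a short, self-contained proof of the corollary, the thing to notice is simply that specializing $\theta=\pi/3$ puts the generators of the even subgroup in a known arithmetic lattice. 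Your final index-two reduction (a group containing a discrete finite-index subgroup is discrete) is correct and matches the paper's.
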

\begin{proof}
Let $\theta=\pi/3$. Then
$$
I_2 I_3=\left[
  \begin{array}{ccc}
    2 & 1-i\sqrt{3} & -1 \\
    -1-i\sqrt{3} & -1 & 0 \\
    -1 & 0 & 0 \\
  \end{array}
\right],\quad
I_3 I_1=\left[
  \begin{array}{ccc}
    0 & 0 & -1 \\
    0 & -1 & -1+i\sqrt{3} \\
    -1 & 1+i\sqrt{3} & 2 \\
  \end{array}
\right],
$$
$$
I_2 I_1=\left[
  \begin{array}{ccc}
    1 & -2 & -2+2i\sqrt{3} \\
    0 & 1 & 2 \\
    0 & 0 & 1 \\
  \end{array}
\right].
$$
Observe that $I_2I_3$ and $I_3I_1$ are contained in the Eisenstein-Picard modular group $\rm{PU}(2,1;\mathbb{Z}[\omega])$, where $\omega=(-1+i\sqrt{3})/2$ is a primite cubic root of the unit. (See for example \cite{fal-par} for more details about the Eisenstein-Picard modular group).
Since $\rm{PU}(2,1;\mathbb{Z}[\omega])$ is discrete, $\langle I_2I_3, I_3I_1 \rangle$ is also discrete. Therefore, $\langle I_1, I_2, I_3 \rangle$ is discrete.
\end{proof}

\section{The Ford domain}\label{sec:ford}

For $\theta\in[0,\pi/3]$, let $S=I_2I_3$ and $T=I_2I_1$. Then $\Gamma=\langle S, T \rangle$ is a subgroup of $\langle I_1, I_2, I_3 \rangle$ of index two.
In this section, we will mainly prove that $\Gamma$ is discrete.
Our method is as follows: firstly, construct a candidate Ford domain $D$ (see Definition \ref{domain:D}),
then applying the Poincar\'{e} polyhedron theorem to show that $D$ is a fundamental domain for the cosets of $\langle T \rangle$ in $\Gamma$,
and as well $\Gamma$ is discrete.

\begin{defn}\label{def:isometric}
For $k\in\mathbb{Z}$, let
\begin{itemize}
  \item  $\mathcal{I}_k^{+}$  be the isometric sphere $\mathcal{I}(T^kST^{-k})=T^k\mathcal{I}(S)$,
  \item  $\mathcal{I}_k^{-}$  be the isometric sphere $\mathcal{I}(T^kS^{-1}T^{-k})=T^k\mathcal{I}(S^{-1})$,
  \item $\mathcal{I}_k^{\star}$  be the isometric sphere $\mathcal{I}(T^kS^2T^{-k})=T^k\mathcal{I}(S^2)$,
  \item $\mathcal{I}_k^{\diamond}$  be the isometric sphere $\mathcal{I}(T^k(S^{-1}T)^2T^{-k})=T^k\mathcal{I}((S^{-1}T)^2)$.
\end{itemize}
\end{defn}
Note that   $S$ and $S^{-1}T$  both have order 4,  so $S^2=S^{-2}$, $(S^{-1}T)^2=(S^{-1}T)^{-2}$.
The centers and radii of the isometric spheres $\mathcal{I}_{k}^{+}$, $\mathcal{I}_{k}^{-}$, $\mathcal{I}_{k}^{\star}$ and $\mathcal{I}_{k}^{\diamond}$ are listed in the following table.

\begin{table}[!htbp]
  \centering
\begin{tabular}{ccc}
\toprule
\textbf{Isometric sphere} & \textbf{Center  } & \textbf{radius}\\
\midrule
$\mathcal{I}_{k}^{+}$&$[4k\cos(\theta),8k\sin(2\theta)]$&$\sqrt{2}$\\
$\mathcal{I}_{k}^{-}$&$[4k\cos(\theta)+2e^{i\theta},0]$&$\sqrt{2}$\\
$\mathcal{I}_{k}^{\star}$&$[4k\cos(\theta)+e^{i\theta},4k\sin(2\theta)]$&$1$\\
$\mathcal{I}_{k}^{\diamond}$&$[4k\cos(\theta)-e^{-i\theta},4k\sin(2\theta)]$&$1$\\
\bottomrule
\end{tabular}

  \label{tab:isometric spheres}
\end{table}

\begin{prop}\label{prop:symmetry}
Let $k\in\mathbb{Z}$.
\begin{enumerate}
  \item There is an antiholomophic involution $\tau$ such that $\tau(\mathcal{I}_{k}^{+})=\mathcal{I}_{-k}^{+}$,
$\tau(\mathcal{I}_{k}^{-})=\mathcal{I}_{-k-1}^{-}$ and $\tau(\mathcal{I}_{k}^{\star})=\mathcal{I}_{-k}^{\diamond}$.
  \item The complex involution $I_2$ interchanges $\mathcal{I}_{k}^{\star}$ and $\mathcal{I}_{-k}^{\star}$,
        interchanges $\mathcal{I}_k^{+}$ and $\mathcal{I}_{-k}^{-}$, and interchanges $\mathcal{I}_k^{\diamond}$ and $\mathcal{I}_{-k+1}^{\diamond}$.
\end{enumerate}
\end{prop}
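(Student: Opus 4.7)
First I would construct the antiholomorphic involution explicitly at the level of $\mathbb{C}^3$ by $\tau:[z_1,z_2,z_3]^T\mapsto[\bar z_1,-\bar z_2,\bar z_3]^T$. A direct computation gives $\langle\tau v,\tau w\rangle=\overline{\langle v,w\rangle}$, so $\tau$ preserves the negative cone and descends to an antiholomorphic involution of $\hc$ that preserves $|\langle\cdot,\cdot\rangle|$, and hence preserves the Cygan distance. On the standard lift \eqref{eq:lift}, $\tau$ acts in Heisenberg coordinates by $[z,t]\mapsto[-\bar z,-t]$, and clearly fixes $q_\infty$. Because every isometric sphere in the statement is a Cygan sphere whose center and radius appear in the table, part (1) reduces to substituting each listed center into $\tau$ and matching it to the target. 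For example, the center $[4k\cos\theta+e^{i\theta},4k\sin2\theta]$ of $\mathcal{I}_k^\star$ is sent by $\tau$ to $[-4k\cos\theta-e^{-i\theta},-4k\sin2\theta]$, which is exactly the center of $\mathcal{I}_{-k}^\diamond$; the $\mathcal{I}_k^+$ case is immediate, and the $\mathcal{I}_k^-$ case is handled by the elementary identity $2e^{i\theta}-4\cos\theta=-2e^{-i\theta}$, which is what absorbs the index shift by $-1$.

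Next I would check by direct matrix multiplication that $I_2$ fixes $q_\infty$, so that Lemma \ref{lem:goldman} applies and gives $I_2\mathcal{I}(g)=\mathcal{I}(I_2gI_2)$ for every $g\in\pu$ not fixing $q_\infty$. Using $S=I_2I_3$, $T=I_2I_1$, and $I_2^2=\mathrm{id}$, one obtains the basic conjugation identities $I_2SI_2=S^{-1}$ and $I_2TI_2=T^{-1}$, and consequently $I_2S^2I_2=S^{-2}=S^2$ since $S$ has order four. The first two identities of part (2) then follow at once: for instance, $I_2\mathcal{I}_k^+=\mathcal{I}(I_2T^kST^{-k}I_2)=\mathcal{I}(T^{-k}S^{-1}T^k)=\mathcal{I}_{-k}^-$, and the $\mathcal{I}_k^\star$ computation is the same with $S$ replaced by $S^2$.

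The only mildly subtle point, and what I expect to be the main piece of bookkeeping, is the $+1$ index shift in the $\diamond$-case. The conjugation rules give $I_2(S^{-1}T)^2I_2=(ST^{-1})^2$, and since $S^{-1}T=I_3I_1$ has order four, so does its $I_2$-conjugate $ST^{-1}$, from which $(ST^{-1})^2=(TS^{-1})^2=T(S^{-1}T)^2T^{-1}$. Substituting, $I_2T^k(S^{-1}T)^2T^{-k}I_2=T^{-k+1}(S^{-1}T)^2T^{k-1}$, whose isometric sphere is $T^{-k+1}\mathcal{I}((S^{-1}T)^2)=\mathcal{I}_{-k+1}^\diamond$, as required. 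Nothing deeper than Lemma \ref{lem:goldman} and the orders of $S$ and $S^{-1}T$ enters the argument, so after these identities are set up the proof is just a careful pass through the group algebra.
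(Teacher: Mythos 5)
Your proposal is correct, and it is worth noting where it diverges from the paper. For part (1) you and the paper start from the same explicit map $\tau:[z_1,z_2,z_3]^T\mapsto[\bar z_1,-\bar z_2,\bar z_3]^T$, but then verify the sphere correspondences differently: the paper observes that $\tau$ fixes ${\bf n}_3$ and swaps ${\bf n}_1\leftrightarrow{\bf n}_2$, hence conjugates $T\mapsto T^{-1}$, $S\mapsto T^{-1}S$, $S^{-1}\mapsto S^{-1}T$, $S^2\mapsto(S^{-1}T)^2$, and reads off the sphere identities from these conjugations; you instead compute the induced Heisenberg action $[z,t]\mapsto[-\bar z,-t]$ and match centers and radii against the table, with the identity $2e^{i\theta}-4\cos\theta=-2e^{-i\theta}$ accounting for the index shift in the $\mathcal{I}_k^{-}$ case. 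Both are elementary and complete; the paper's version has the side benefit that the conjugation formulas for $\tau$ on group elements are reused later (e.g.\ in the analysis of the triple intersections and of the points $p_8,\dots,p_{11}$), whereas your version is self-contained given the table of centers. For part (2) you follow essentially the paper's route ($I_2SI_2=S^{-1}$, $I_2TI_2=T^{-1}$) but supply the bookkeeping for the $\diamond$-case that the paper leaves as ``easily obtained''; your identity $(ST^{-1})^2=T(S^{-1}T)^2T^{-1}$, coming from the order of $S^{-1}T$, is exactly the right way to produce the shift to $\mathcal{I}_{-k+1}^{\diamond}$. One small attribution point: the identity $h\,\mathcal{I}(g)=\mathcal{I}(hgh^{-1})$ for $h$ fixing $q_\infty$ is not literally Lemma \ref{lem:goldman} (which asserts $\mathcal{I}(g)=\mathcal{I}(hg)$ for unipotent $h$); it is the standard companion fact the paper already uses implicitly in Definition \ref{def:isometric} when writing $\mathcal{I}(T^kST^{-k})=T^k\mathcal{I}(S)$, so you should either cite that convention or give the one-line equidistance computation.
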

\begin{proof}
(1) Let $\tau : {\mathbb{C}}^3 \longrightarrow {\mathbb{C}}^3$ be given as follows:
$$
\tau : \left[
         \begin{array}{c}
           z_1 \\
           z_2 \\
           z_3 \\
         \end{array}
       \right] \longmapsto
       \left[
         \begin{array}{c}
           \bar{z}_1 \\
           -\bar{z}_2 \\
           \bar{z}_3 \\
         \end{array}
       \right].
$$
Then ${\tau}^2$ is the identity. It is easy to see that $\tau$ fixes the polar vector ${\bf{n}}_3$, and interchanges the polar vectors ${\bf{n}}_1$ and ${\bf{n}}_2$.
Thus $\tau$ conjugates $I_3$ to itself, $I_1$ to $I_2$ and vice versa.
Therefore $\tau$ conjugates $T$ to $T^{-1}$, $S$ to $T^{-1}S$, $S^{-1}$ to $S^{-1}T$, and $S^2$ to $(T^{-1}S)^2=(S^{-1}T)^2$.
This implies that $\tau(\mathcal{I}_{k}^{+})=\mathcal{I}_{-k}^{+}$,
$\tau(\mathcal{I}_{k}^{-})=\mathcal{I}_{-k-1}^{-}$ and $\tau(\mathcal{I}_{k}^{\star})=\mathcal{I}_{-k}^{\diamond}$.

(2) The statement is easily obtained by the facts $I_2SI_2=S^{-1}$ and $I_2TI_2=T^{-1}$.
\end{proof}

Before we consider the intersections of two isometric spheres, we would like to give a useful technical lemma.
Suppose that $q \in \mathcal{I}_{0}^{+}$. Then by (\ref{eq:geog-coor}) the geographic coordinates of $q=q(\alpha,\beta,w)$ is given by the lift
\begin{equation}\label{eq:geog-coor-plus-0}
{\bf{q}}={\bf q}(\alpha,\beta,w)=\left[
           \begin{array}{c}
             -e^{-i\alpha} \\
             \sqrt{2}we^{i(-\alpha/2+\beta)} \\
             1 \\
           \end{array}
         \right]
\end{equation}
where $\alpha\in [-\pi/2,\pi/2]$, $\beta\in [0, \pi)$ and $w\in [-\sqrt{\cos(\alpha)},\sqrt{\cos(\alpha)}]$.

\begin{defn}\label{def:functions}
Let $(\alpha,\beta,w)$ be the geographic coordinates of $\mathcal{I}_{0}^{+}$. We define the following functions.
\begin{equation*}
  f_{0}^{\star}(\alpha,\beta,w)= 2w^2+1+\cos(\alpha)-\sqrt{2}w\cos(-\alpha/2+\beta-\theta)-2\sqrt{2}w\cos(\alpha/2+\beta-\theta),
\end{equation*}
\begin{equation*}
  f_{0}^{-}(\alpha,\beta,w)= 2w^2+1+\cos(\alpha)-\sqrt{2}w\cos(\alpha/2+\beta-\theta)-2\sqrt{2}w\cos(-\alpha/2+\beta-\theta),
\end{equation*}
\begin{equation*}
  f_{-1}^{-}(\alpha,\beta,w)= 2w^2+1+\cos(\alpha)+\sqrt{2}w\cos(\alpha/2+\beta+\theta)+2\sqrt{2}w\cos(-\alpha/2+\beta+\theta).
\end{equation*}
\end{defn}

\begin{lem}\label{lem:functions}
Suppose that $\theta\in[0,\pi/3]$. Let $f_{0}^{\star}(\alpha,\beta,w)$, $f_{0}^{-}(\alpha,\beta,w)$ and $f_{-1}^{-}(\alpha,\beta,w)$ be the functions defined in Definition \ref{def:functions}. Suppose that $q \in \mathcal{I}_{0}^{+}$. Then we have the following properties.
\begin{enumerate}
  \item $q$ lies on $\mathcal{I}_{0}^{\star}$ (resp. in its interior or exterior) if and only if $f_{0}^{\star}(\alpha,\beta,w)=0$ (resp. negative or positive);
  \item $q$ lies on $\mathcal{I}_{0}^{-}$ (resp. in its interior or exterior) if and only if $f_{0}^{-}(\alpha,\beta,w)=0$ (resp. negative or positive);
  \item $q$ lies on $\mathcal{I}_{-1}^{-}$ (resp. in its interior or exterior) if and only if $f_{-1}^{-}(\alpha,\beta,w)=0$ (resp. negative or positive).
\end{enumerate}
\end{lem}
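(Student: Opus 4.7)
The plan is a direct Hermitian-form computation that reduces the three statements to three parallel trigonometric identities. The key reduction is as follows. Since each of $\mathcal{I}_{0}^{\star}$, $\mathcal{I}_{0}^{-}$, $\mathcal{I}_{-1}^{-}$ is a Cygan sphere with center $p_{0}\in\partial\hc$ and radius $r$ (read off the table following Definition \ref{def:isometric}), and since $d_{\textrm{Cyg}}(p,q)=|2\langle\mathbf{p},\mathbf{q}\rangle|^{1/2}$ remains valid when one of the two points lies in $\hc$ and the other on $\partial\hc$, the position of any $q\in\hc\cup\partial\hc$ relative to such a sphere is controlled by the sign of $|2\langle\mathbf{q},\mathbf{p}_{0}\rangle|^{2}-r^{4}$: zero corresponds to $q\in\mathcal{I}$, negative to the interior, positive to the exterior. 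It therefore suffices to evaluate this quantity on the lift $\mathbf{q}={\bf q}(\alpha,\beta,w)$ prescribed by (\ref{eq:geog-coor-plus-0}).

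For each target sphere I would write down the standard lift of its center using (\ref{eq:lift}): for $\mathcal{I}_{0}^{\star}$ the center is $[e^{i\theta},0]$ and $r^{4}=1$; for $\mathcal{I}_{0}^{-}$ it is $[2e^{i\theta},0]$ with $r^{4}=4$; for $\mathcal{I}_{-1}^{-}$ it is $[-4\cos(\theta)+2e^{i\theta},0]=[-2e^{-i\theta},0]$, again with $r^{4}=4$. In each case the pairing $\langle\mathbf{q},\mathbf{p}_{0}\rangle=\mathbf{p}_{0}^{*}H\mathbf{q}$ is a sum of three complex terms; squaring the modulus and expanding the three cross terms via $2\,\mathrm{Re}\,e^{i\phi}=2\cos\phi$ produces exactly the angle combinations $\alpha$, $-\alpha/2+\beta-\theta$, $\alpha/2+\beta-\theta$ (and, for $\mathcal{I}_{-1}^{-}$, with $-\theta$ replaced by $+\theta$) with signs determined by the phases of the centers. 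Dividing by the appropriate positive constant ($4$ in the $\star$ case and $16$ in each of the $-$ cases), the expression $|2\langle\mathbf{q},\mathbf{p}_{0}\rangle|^{2}-r^{4}$ matches $f_{0}^{\star}$, $f_{0}^{-}$, $f_{-1}^{-}$ of Definition \ref{def:functions} term by term, and the three equivalences follow at once.

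The only real obstacle is trigonometric bookkeeping of the nine cross terms; no geometric subtlety is involved beyond the Cygan-sphere description of isometric spheres recorded in Lemma \ref{lem:goldman} and the surrounding discussion. The hypothesis $\theta\in[0,\pi/3]$ plays no role in the identity itself and is merely inherited from the setup of Section \ref{sec:ford}.
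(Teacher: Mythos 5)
Your proposal is correct and follows essentially the same route as the paper: both reduce each statement to the sign of $\left|2\langle \mathbf{q},\mathbf{p}_0\rangle\right|^2-r^4$ for the standard lift of the relevant center, evaluate this on the geographic-coordinate lift (\ref{eq:geog-coor-plus-0}), and identify the result with $4f_0^{\star}$, $16f_0^{-}$, $16f_{-1}^{-}$ respectively. The centers, lifts, and normalizing constants you list all agree with the paper's computation, so this is the intended argument.
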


\begin{proof}
(1) Any point $q \in \mathcal{I}_{0}^{+}$ lies on  $\mathcal{I}_{0}^{\star}$ (resp. in its interior or exterior) if and only if the Cygan distance between $q$ and the center of $ \mathcal{I}_{0}^{\star}$ is 1 (resp. less than 1 or greater than 1).
Using \ref{eq:geog-coor-plus-0}, the difference between the Cygan distance from $q$ to the center of $ \mathcal{I}_{0}^{\star}$ and 1 is
\begin{eqnarray*}
  \lefteqn{\left| 2 \left\langle {\bf{q}}, \left[
                                        \begin{array}{c}
                                          -1/2 \\
                                          e^{i\theta} \\
                                          1 \\
                                        \end{array}
                                      \right] \right\rangle
   \right|^2-1} \\
   &=& 4\left| -e^{-i\alpha}+ \sqrt{2}we^{i(-\alpha/2+\beta-\theta)}-1/2 \right|^2-1 \\
   &=& 4 \left( 2w^2+1+\cos(\alpha)-\sqrt{2}w\cos(-\alpha/2+\beta-\theta)-2\sqrt{2}w\cos(\alpha/2+\beta-\theta) \right) \\
   &=& 4f_{0}^{\star}(\alpha,\beta,w).
\end{eqnarray*}
Hence, $q$ lies on $\mathcal{I}_{0}^{\star}$ (resp. in its interior or exterior) if and only if $f_{0}^{\star}(\alpha,\beta,w)=0$ (resp. negative or positive).

(2) Similarly, the difference between the Cygan distance from $q$ to the center of $ \mathcal{I}_{0}^{-}$ and its radius is
\begin{eqnarray*}
\lefteqn{\left| 2 \left\langle {\bf{q}}, \left[
                                        \begin{array}{c}
                                          -2 \\
                                          2e^{i\theta} \\
                                          1 \\
                                        \end{array}
                                      \right] \right\rangle
   \right|^2-4}\\
 &=& 4\left| -e^{-i\alpha}+2 \sqrt{2}we^{i(-\alpha/2+\beta-\theta)}-2 \right|^2-4 \\
  &=& 16 \left( 2w^2+1+\cos(\alpha)-\sqrt{2}w\cos(\alpha/2+\beta-\theta)-2\sqrt{2}w\cos(-\alpha/2+\beta-\theta) \right) \\
  &=& 16 f_{0}^{-}(\alpha,\beta,w).
\end{eqnarray*}

(3) Similarly, the difference between the Cygan distance from $q$ to the center of $ \mathcal{I}_{-1}^{-}$ and its radius is
\begin{eqnarray*}
  \lefteqn{\left| 2 \left\langle {\bf{q}}, \left[
                                        \begin{array}{c}
                                          -2 \\
                                          -2e^{-i\theta} \\
                                          1 \\
                                        \end{array}
                                      \right] \right\rangle
   \right|^2-4} \\
   &=& 4\left| -e^{-i\alpha}-2 \sqrt{2}we^{i(-\alpha/2+\beta+\theta)}-2 \right|^2-4 \\
   &=& 16 \left( 2w^2+1+\cos(\alpha)+\sqrt{2}w\cos(\alpha/2+\beta+\theta)+2\sqrt{2}w\cos(-\alpha/2+\beta+\theta) \right) \\
   &=& 16 f_{-1}^{-}(\alpha,\beta,w).
\end{eqnarray*}
\end{proof}

Now, we begin to study the intersections of isometric spheres.

\begin{prop}
Suppose that $\theta\in[0,\pi/3]$, then each pair of the isometric spheres in \{ $\mathcal{I}_k^{+} : k\in\mathbb{Z}$ \} are disjoint in $\hc \cup \partial\hc$.
\end{prop}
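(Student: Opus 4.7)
The plan is to exploit the $T$-equivariance of the family. Since $T$ is a Heisenberg translation fixing $q_\infty$ and $\mathcal{I}_k^{+}=T^k\mathcal{I}_0^{+}$, the pair $(\mathcal{I}_j^{+},\mathcal{I}_k^{+})$ is the $T^j$-image of $(\mathcal{I}_0^{+},\mathcal{I}_{k-j}^{+})$. Hence it suffices to show that $\mathcal{I}_0^{+}\cap \mathcal{I}_k^{+}=\emptyset$ for every $k\in\mathbb{Z}\setminus\{0\}$. A further reduction via the antiholomorphic involution $\tau$ of Proposition \ref{prop:symmetry}, which sends $\mathcal{I}_k^{+}$ to $\mathcal{I}_{-k}^{+}$, allows me to assume $k\geq 1$.

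My main tool will be the Cygan-distance disjointness criterion: because $d_{\textrm{Cyg}}$ is a genuine distance that extends to $\hc$ via (\ref{eq:cygan-metric-extend}), the triangle inequality implies that two Cygan balls are disjoint whenever the Cygan distance between their centers strictly exceeds the sum of their radii. From the table of centers and radii I have $p_0=[0,0]$, $p_k=[4k\cos\theta,8k\sin(2\theta)]$, and $r_0=r_k=\sqrt{2}$. A direct application of (\ref{eq:cygan-metric}) gives
\[
d_{\textrm{Cyg}}(p_0,p_k)^{2}=\bigl|\,16k^{2}\cos^{2}\theta+8ik\sin(2\theta)\,\bigr|=16|k|\cos\theta\,\sqrt{k^{2}\cos^{2}\theta+\sin^{2}\theta}.
\]
Comparing with $(r_0+r_k)^{2}=8$, the desired strict inequality reduces to
\[
2|k|\cos\theta\,\sqrt{k^{2}\cos^{2}\theta+\sin^{2}\theta}>1.
\]
Using $\cos\theta\geq 1/2$ on $[0,\pi/3]$, this is immediate for $|k|\geq 2$, and for $|k|=1$ it is equivalent to $2\cos\theta>1$, which holds strictly whenever $\theta<\pi/3$. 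In all these cases the triangle inequality finishes the proof.

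The main obstacle is the borderline case $\theta=\pi/3$ with $|k|=1$, where the centers lie at exactly Cygan distance $2\sqrt{2}=r_0+r_1$, so the triangle-inequality argument is inconclusive: the Cygan metric is not a length metric, and equality in $d_{\textrm{Cyg}}(p_0,p_1)=r_0+r_1$ need not produce a common point. To settle this case I would substitute the explicit Cygan-sphere equation from (\ref{eq:cygan-sphere}) and its Heisenberg translate, write any putative common point as $(z,t,u)$ with $z=x+iy$, and subtract the two equations to eliminate the quartic terms. The resulting linear relation together with one of the sphere equations allows one to eliminate $x,t,u$ and reduces the existence of an intersection point to the solvability of a univariate real quartic of the form $y^{4}+6y^{2}-8\sqrt{3}\,y+9=0$. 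A one-variable calculus argument (its second derivative is strictly positive, so it has a unique critical point, at which the value is still strictly positive) shows the quartic has no real roots, establishing disjointness in the edge case as well and completing the proof.
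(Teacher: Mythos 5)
Your reduction to the pair $(\mathcal{I}_0^{+},\mathcal{I}_k^{+})$ with $k\geq 1$, and your Cygan-distance estimate
$d_{\textrm{Cyg}}(p_0,p_k)^2=16|k|\cos\theta\sqrt{k^2\cos^2\theta+\sin^2\theta}$ compared against $(r_0+r_k)^2=8$, are correct and coincide with the paper's argument; both correctly identify $|k|=1$, $\theta=\pi/3$ as the only case where the triangle inequality is inconclusive.

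The gap is in your treatment of that borderline case. You propose to subtract the two Cygan-sphere equations and then ``eliminate $x,t,u$'' using the resulting linear relation together with one sphere equation, arriving at a single quartic in $y$. This elimination cannot work as described: a putative common point has four real unknowns $(x,y,t,u)$ constrained by only two equations, so the solution set of the system is generically two-dimensional (indeed, by Proposition \ref{prop:connect} the intersection of two Cygan spheres, when nonempty, is a Giraud disk). Two relations cannot remove three variables, so the nonexistence of real roots of a univariate polynomial cannot certify that the full two-parameter system is infeasible; at best you would obtain a condition on $(x,y)$ ranging over a disk (concretely, writing $P=|z|^2+u$, the two equations say that $(P,t)$ lies on two circles of radius $2$ in the $(P,t)$-plane whose centers depend on $(x,y)$, and one must rule out $P\geq x^2+y^2$ over all $(x,y)$ with $(x-1)^2+(y-\sqrt{3})^2\leq 1$ --- a genuinely two-variable feasibility problem). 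The paper handles this by parametrizing $\mathcal{I}_0^{+}$ with geographic coordinates $(\alpha,\beta,w)$, expressing the signed distance to $\mathcal{I}_1^{+}$ as a quadratic $f(\alpha,\beta,w)$ in $w$, showing its vertex lies outside the admissible interval $[-\sqrt{\cos\alpha},\sqrt{\cos\alpha}]$, and then verifying $f(\alpha,\beta,-\sqrt{\cos\alpha})>0$ over the remaining two parameters. You would need to either carry out an analogous two-variable positivity argument or justify rigorously how your system collapses to one variable; as written, the final step does not establish disjointness.
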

\begin{proof}
It suffices to show that $\mathcal{I}_{0}^{+}$ and $\mathcal{I}_k^{+}$ are disjoint for $|k|\geq1$.
Observe that $T$ is a Heisenberg translation associated with $[4\cos(\theta),8\sin(2\theta)]$.
Since the isometric sphere $\mathcal{I}_{0}^{+}$ has center $[0,0]$ and radius $\sqrt{2}$, the isometric sphere
$\mathcal{I}_k^{+}$ has center $[4k\cos(\theta), 8k\sin(2\theta)]$ and radius $\sqrt{2}$.
According to the Cygan metric given in (\ref{eq:cygan-metric}), the Cygan distance  between the centers of $\mathcal{I}_{0}^{+}$ and $\mathcal{I}_k^{+}$ is
$$
4\sqrt{|k| \cos(\theta)}|k\cos(\theta)-i\sin(\theta)|^{1/2}\geq 4\sqrt{\cos(\theta)} \geq 2\sqrt{2}.
$$
Thus the Cygan distance between the centers of $\mathcal{I}_{0}^{+}$ and $\mathcal{I}_k^{+}$ is bigger than the sum of the radii, except when $k=\pm 1$ and $\theta=\pi/3$.
This implies that $\mathcal{I}_{0}^{+}$ and $\mathcal{I}_k^{+}$ are disjoint for all $|k|\geq 2$.

When $k=\pm 1$ and $\theta=\pi/3$, although the Cygan distance between the centers of $\mathcal{I}_{0}^{+}$ and $\mathcal{I}_{\pm 1}^{+}$ is the sum of the radii, we claim that
they are still disjoint.
Using the symmetry $\tau$ in Proposition \ref{prop:symmetry}, we only need to show that $\mathcal{I}_{0}^{+} \cap \mathcal{I}_{1}^{+}=\emptyset$.
Suppose that $q \in \mathcal{I}_{0}^{+}$. Using the geographic coordinates of $q=q(\alpha,\beta,w)$ given in (\ref{eq:geog-coor-plus-0}), we can compute the difference between the Cygan distance of $q$ and the center of $\mathcal{I}_{1}^{+}$ with its radius. That is
\begin{eqnarray*}
  \lefteqn{\left| 2 \left\langle {\bf{q}}, \left[a
                                        \begin{array}{c}
                                          -4e^{-i\pi/3} \\
                                          2 \\
                                          1 \\
                                        \end{array}
                                      \right] \right\rangle
   \right|^2-4} \\
   &=& 4\left| -e^{-i\alpha}+ \sqrt{2}we^{i(-\alpha/2+\beta-\theta)}-4e^{i\pi/3} \right|^2-4 \\
   &=& 32 \left( w^2+\sqrt{2}w\left(\cos(\alpha/2+\beta)/2+2\cos(\alpha/2-\beta+\pi/3)\right)+\cos(\alpha+\pi/3)+2 \right) \\
   &=& 32f(\alpha,\beta,w).
\end{eqnarray*}
Here $f(\alpha,\beta,w)$ can be seen as a quadratic function of $w$. Let
$$B=\sqrt{2}\left(\cos(\alpha/2+\beta)/2+2\cos(\alpha/2-\beta+\pi/3)\right)$$ and $C=\cos(\alpha+\pi/3)+2$.
If $B^2-4C<0$, then it is obvious that  $f(\alpha,\beta,w)>0$. If $B^2-4C\geq 0$, then $B\leq -2\sqrt{C}$ (it is impossible by numerically computation) or $B\geq 2\sqrt{C}$. In this case we have $B-2\sqrt{\cos(\alpha)}\geq B-2\sqrt{C}\geq 0$ since $\cos(\alpha)\leq C$. This means that the symmetry axes of the $f$ lie on the left side of $w=-\sqrt{\cos(\alpha)}$. Besides,
one can compute numerically that $f(\alpha,\beta,-\sqrt{\cos(\alpha)})>0$ on the range of $\alpha$ and $\beta$.
So, we have $f(\alpha,\beta,w)>0$. This means that every point on $\mathcal{I}_{0}^{+}$ lies outside of $\mathcal{I}_{1}^{+}$.
Hence $\mathcal{I}_{0}^{+} \cap \mathcal{I}_{1}^{+}=\emptyset$.
\end{proof}

By a similar argument, we have the following proposition.
\begin{prop}\label{prop:s0plus}
Suppose that $\theta\in[0,\pi/3]$, then
\begin{enumerate}
  \item $\mathcal{I}_{0}^{+}$ and $\mathcal{I}_k^{-}$ are disjoint in $\hc \cup \partial\hc$, except possibly when $k=-1,0$,
  \item $\mathcal{I}_{0}^{+}$ and $\mathcal{I}_k^{\star}$ are disjoint in $\hc \cup \partial\hc$, except possibly when $k=-1,0$,
  \item $\mathcal{I}_{0}^{+}$ and $\mathcal{I}_k^{\diamond}$ are disjoint in $\hc \cup \partial\hc$, except possibly when $k=0,1$.
\end{enumerate}
\end{prop}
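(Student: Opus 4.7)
The plan is to follow the strategy of the preceding proposition verbatim: for each of the three statements, read the centers and radii off Table \ref{tab:isometric spheres}, compute the Cygan distance between $[0,0]$ (the center of $\mathcal{I}_0^+$) and the center of the competing sphere via \eqref{eq:cygan-metric}, and show that this distance strictly exceeds the sum of the two radii for every $k$ in the claimed range and every $\theta\in[0,\pi/3]$. Since $d_{\text{Cyg}}$ is a genuine metric, strict inequality forces the Cygan balls, and hence the isometric spheres, to be disjoint.

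For Part~(1), both centers lie on the slice $t=0$ of $\partial\hc$, so the Cygan distance reduces to the modulus of the horizontal displacement:
\[
 d_{\text{Cyg}}^2 \;=\; |4k\cos\theta+2e^{i\theta}|^2 \;=\; 16\,k(k+1)\cos^2\theta+4.
\]
For $k\notin\{-1,0\}$ one has $k(k+1)\geq 2$, while $\cos^2\theta\geq 1/4$ on $[0,\pi/3]$, whence $d_{\text{Cyg}}^2\geq 12>8=(2\sqrt{2})^2$. For Parts~(2) and~(3) the second center has $t$-coordinate $4k\sin(2\theta)$, and a short computation with \eqref{eq:cygan-metric} gives
\[
 d_{\text{Cyg}}^4 \;=\; \bigl(8\,k(2k\pm 1)\cos^2\theta+1\bigr)^2 + 16\,k^2\sin^2(2\theta),
\]
with $+$ in Part~(2) and $-$ in Part~(3). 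The excluded-index hypotheses force $k(2k+1)\geq 3$ in Part~(2) (minimum at $k=1$) and $k(2k-1)\geq 3$ in Part~(3) (minimum at $k=-1$). In each of these critical cases the right-hand side is a polynomial in $u=\cos^2\theta$ whose derivative is easily seen to be positive on $[0,1]$, so the minimum over $\theta\in[0,\pi/3]$ is attained at $\theta=\pi/3$, yielding $d_{\text{Cyg}}^4\geq 49+12=61$. Since $(\sqrt{2}+1)^4=17+12\sqrt{2}<34<61$, we obtain $d_{\text{Cyg}}>\sqrt{2}+1$, which is the required strict inequality.

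In contrast to the preceding proposition, I expect no tight case to occur, so the refined geographic-coordinate analysis of Lemma~\ref{lem:functions} will not be needed here; its functions $f_0^\star$, $f_0^-$, $f_{-1}^-$ are tailored precisely to the excluded indices $k=-1,0$, which will be analysed in the subsequent propositions. The only real obstacle is the algebraic bookkeeping: correctly identifying the minimum of $k(k+1)$ and $k(2k\pm 1)$ over the relevant index ranges, and verifying monotonicity of the quartic $d_{\text{Cyg}}^4$ in $\cos^2\theta$ so that the worst case is located at $\theta=\pi/3$. Once these are in hand, the three disjointness statements follow uniformly from the triangle inequality.
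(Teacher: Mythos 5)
Your proposal is correct and takes essentially the same approach as the paper: for each pair one compares the Cygan distance between the centers (read off from the table of isometric spheres) with the sum of the radii, using the triangle inequality for the extended Cygan metric. The only cosmetic difference is that in parts (2) and (3) the paper bounds the distance below by the simpler quantity $|4k\pm 1|\cos(\theta)$, whereas you compute $d_{\textrm{Cyg}}^4$ exactly and minimize over $u=\cos^2(\theta)$; both verifications are sound and identify the same worst cases.
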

\begin{proof}
(1) Since $\mathcal{I}_{0}^{-}$ has center $[2e^{i\theta},0]$ and radius $\sqrt{2}$, the isometric sphere $\mathcal{I}_k^{-}$ has center $[4k\cos(\theta)+2e^{i\theta},0]$
and radius $\sqrt{2}$.
The Cygan distance between the centers of $\mathcal{I}_{0}^{+}$ and $\mathcal{I}_k^{-}$ is
$$\left|4k\cos(\theta)+2e^{i\theta}\right|=2\sqrt{\sin^2(\theta)+(2k+1)^2\cos^2(\theta)},$$
which is bigger than $2\sqrt{2}$ when $k\neq -1, 0$.

(2) Since $\mathcal{I}_{0}^{\star}$ has center $[e^{i\theta},0]$ and radius $1$, the isometric sphere $\mathcal{I}_k^{\star}$ has center $[4k\cos(\theta)+e^{i\theta},4k\sin(2\theta)]$
and radius $1$. The Cygan distance between the centers of $\mathcal{I}_{0}^{+}$ and $\mathcal{I}_k^{\star}$ is
$$\left||(4k+1)\cos(\theta)+i\sin(\theta)|^2-i(4k\sin(2\theta))\right|^{1/2}\geq |4k+1|\cos(\theta),$$
which is bigger than $1+\sqrt{2}$ when $k\neq -1, 0$.

(3) Since $\mathcal{I}_{0}^{\diamond}$ has center $[-e^{-i\theta},0]$ and radius $1$, the isometric sphere $\mathcal{I}_k^{\diamond}$ has center $[4k\cos(\theta)-e^{-i\theta},4k\sin(2\theta)]$ and radius $1$. The Cygan distance between the centers of $\mathcal{I}_{0}^{+}$ and $\mathcal{I}_k^{\diamond}$ is $$\left||(4k-1)\cos(\theta)+i\sin(\theta)|^2-i(4k\sin(2\theta))\right|^{1/2}\geq |4k-1|\cos(\theta),$$
which is bigger than $1+\sqrt{2}$ when $k\neq 0, 1$.
\end{proof}

Similarly, we have
\begin{prop}\label{prop:s0star}
Suppose that $\theta\in[0,\pi/3]$, then
\begin{enumerate}
  \item $\mathcal{I}_{0}^{\star}$ and $\mathcal{I}_k^{\star}$ are disjoint in $\hc$. Furthermore, when $\theta=\pi/3$ the closure of $\mathcal{I}_{0}^{\star}$ and $\mathcal{I}_{-1}^{\star}$
  (respectively, $\mathcal{I}_{1}^{\star}$) is tangent at the parabolic fixed point of $T^{-1}S^2$ (respectively, $T(T^{-1}S^2)T^{-1}$) on $\partial \hc$.
  \item $\mathcal{I}_{0}^{\star}$ and $\mathcal{I}_k^{\diamond}$ are disjoint in $\hc \cup \partial\hc$, except possibly when $k=0,1$.
\end{enumerate}
\end{prop}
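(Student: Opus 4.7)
The plan is to mirror the proof of Proposition \ref{prop:s0plus}: use formula (\ref{eq:cygan-metric}) to compute the Cygan distance between the centers of the relevant isometric spheres, compare with the sum of radii, and handle separately the tangency configurations that appear at $\theta=\pi/3$. For part (1), with centers $[e^{i\theta},0]$ and $[4k\cos(\theta)+e^{i\theta},4k\sin(2\theta)]$ of $\mathcal{I}_{0}^{\star}$ and $\mathcal{I}_{k}^{\star}$, the $t-s$ term and the imaginary correction $2\,\mathrm{Im}(z\bar w)$ cancel exactly, producing $d_{\textrm{Cyg}}=4|k|\cos(\theta)$. For $\theta\in[0,\pi/3)$ and $k\neq 0$ this exceeds the sum of radii $2$, and for $\theta=\pi/3$ it still exceeds $2$ whenever $|k|\geq 2$, giving disjointness in $\hc\cup\partial\hc$ in all these cases. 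The only remaining situation is $\theta=\pi/3$ with $k=\pm 1$, where $d_{\textrm{Cyg}}=2$ and the closures of the two spheres meet tangentially; in particular their interiors remain disjoint in $\hc$.

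To identify the tangent point when $k=-1$, observe that $T^{-1}S^{2}=(I_2I_1)^{-1}(I_2I_3)^{2}=I_1I_3I_2I_3$, which by Proposition \ref{prop:trianle} is parabolic precisely when $\theta=\pi/3$. Since $T^{\pm 1}$ are unipotent and fix $q_\infty$, Lemma \ref{lem:goldman} yields $\mathcal{I}(T^{-1}S^{2})=\mathcal{I}(S^{2})=\mathcal{I}_{0}^{\star}$, and, using $(T^{-1}S^{2})^{-1}=S^{2}T=T(T^{-1}S^{2}T)$, also $\mathcal{I}((T^{-1}S^{2})^{-1})=\mathcal{I}(T^{-1}S^{2}T)=T^{-1}\mathcal{I}(S^{2})=\mathcal{I}_{-1}^{\star}$. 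A short standard-lift computation shows that the unique parabolic fixed point of any parabolic $g\in\pu$ with $g(q_\infty)\neq q_\infty$ lies on both $\mathcal{I}(g)$ and $\mathcal{I}(g^{-1})$: from $g\mathbf{p}=\lambda\mathbf{p}$ with $|\lambda|=1$ one gets $|\langle\mathbf{p},g^{-1}\mathbf{q}_\infty\rangle|=|\lambda|\,|\langle\mathbf{p},\mathbf{q}_\infty\rangle|=|\langle\mathbf{p},\mathbf{q}_\infty\rangle|$. Hence the parabolic fixed point of $T^{-1}S^{2}$ lies in $\mathcal{I}_{0}^{\star}\cap\mathcal{I}_{-1}^{\star}$, and since the tangency configuration (two Cygan spheres of equal radius at Cygan distance equal to the sum of radii) forces a single common point, it must be this parabolic fixed point. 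Conjugation by $T$ transports the statement to $\mathcal{I}_{0}^{\star}\cap\mathcal{I}_{1}^{\star}$ with parabolic element $T(T^{-1}S^{2})T^{-1}=S^{2}T^{-1}$.

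For part (2), the analogous calculation with centers $[4k\cos(\theta)-e^{-i\theta},4k\sin(2\theta)]$ of $\mathcal{I}_{k}^{\diamond}$ gives $|z-w|^{2}=4(1-2k)^{2}\cos^{2}(\theta)$, and therefore $d_{\textrm{Cyg}}\geq|z-w|=2|1-2k|\cos(\theta)$ regardless of the sign of the imaginary correction. For $k\notin\{0,1\}$ we have $|1-2k|\geq 3$ and $\cos(\theta)\geq 1/2$ on $[0,\pi/3]$, so $d_{\textrm{Cyg}}\geq 3>2$ strictly exceeds the sum of radii. Consequently $\mathcal{I}_{0}^{\star}\cap\mathcal{I}_{k}^{\diamond}=\emptyset$ in $\hc\cup\partial\hc$ for every such $k$.

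I expect the main subtlety to be the tangency analysis in part (1) rather than the distance bookkeeping. The clean way to package it is via Lemma \ref{lem:goldman}: both tangent spheres $\mathcal{I}_{0}^{\star}$ and $\mathcal{I}_{-1}^{\star}$ arise as the paired isometric spheres of the single parabolic element $T^{-1}S^{2}$, forcing its unique parabolic fixed point to be the point of tangency with no need for explicit computation of its coordinates.
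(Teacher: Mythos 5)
Your proof is correct and follows essentially the same route as the paper's: the same Cygan-distance computation between centers (giving $4|k|\cos(\theta)$ in part (1) and the lower bound $2|2k-1|\cos(\theta)$ in part (2)), the same comparison with the sum of the radii, and the same identification $\mathcal{I}_{0}^{\star}=\mathcal{I}(T^{-1}S^{2})$, $\mathcal{I}_{-1}^{\star}=\mathcal{I}\bigl((T^{-1}S^{2})^{-1}\bigr)$ in the tangency case. The one point of divergence is how the tangency is justified: the paper simply invokes Phillips's theorem (Theorem 6.1 of \cite{phillips}), which says that the paired isometric spheres of a parabolic element are tangent at its fixed point, whereas you rederive the relevant half of that statement directly --- the fixed point lies on both $\mathcal{I}(g)$ and $\mathcal{I}(g^{-1})$ because $\langle \mathbf{p}, g^{-1}\mathbf{q}_{\infty}\rangle=\langle g\mathbf{p},\mathbf{q}_{\infty}\rangle$ --- and obtain disjointness of the interiors from the Cygan triangle inequality. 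That is a clean, self-contained substitute. The only step you assert rather than prove is that two Cygan spheres whose centers lie at Cygan distance exactly the sum of the radii meet in a \emph{single} common point (this requires analyzing the equality case of the Cygan triangle inequality, which is not as automatic as in the Euclidean setting); but that is precisely the content the paper delegates to Phillips's theorem, so your argument is at the same level of completeness as the original.
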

\begin{proof}
(1) $\mathcal{I}_k^{\star}$ is a Cygan sphere with center $[4k\cos(\theta)+e^{i\theta},4k\sin(2\theta)]$ and radius $1$, thus the distance between the centers of
$\mathcal{I}_{0}^{\star}$ and $\mathcal{I}_{k}^{\star}$ is
$$
d_{\rm{Cyg}}([4k\cos(\theta)+e^{i\theta},4k\sin(2\theta)],[e^{i\theta},0])=|4k\cos(\theta)|\geq 2.
$$
The equality holds when $k=\pm 1$ and $\theta=\pi/3$.

When $\theta=\pi/3$, we have known that $I_1I_3I_2I_3=T^{-1}S^2$ is unipotent. Since
$$\mathcal{I}_{0}^{\star}=\mathcal{I}(S^2)=\mathcal{I}(T^{-1}S^{2}),$$
and
$$\mathcal{I}_{-1}^{\star}=\mathcal{I}(T^{-1}S^2T)=\mathcal{I}(S^2T)=\mathcal{I}(S^{-2}T),$$
using Phillips's theorem (Theorem 6.1 of \cite{phillips}), the closure of $\mathcal{I}_{0}^{\star}$ and $\mathcal{I}_{-1}^{\star}$ will be tangent at the fixed point of $T^{-1}S^2$ on $\partial \hc$.
Since
$$\mathcal{I}_{0}^{\star}=T(\mathcal{I}_{-1}^{\star}),\quad \mathcal{I}_{1}^{\star}=T(\mathcal{I}_{0}^{\star}),$$
the closure of $\mathcal{I}_{0}^{\star}$ and $\mathcal{I}_{1}^{\star}$ is tangent at the fixed point of $T(T^{-1}S^2)T^{-1}$ on $\partial \hc$.

(2) The distance between the centers of $\mathcal{I}_{0}^{\star}$ and $\mathcal{I}_k^{\diamond}$ is
\begin{eqnarray*}
  \lefteqn{d_{\rm{Cyg}}([4k\cos(\theta)-e^{-i\theta},4k\sin(2\theta)],[e^{i\theta},0])}\\
   &=& 2\sqrt{\cos(\theta)}\cdot|(2k-1)^2\cos(\theta)-i\sin(\theta)|^{1/2} \\
  &\geq& 2|2k-1|\cos(\theta),
\end{eqnarray*}
which is bigger than $2$, except when $k=0,1$.
\end{proof}

\begin{lem}\label{lem:triple}
Suppose that $\theta\in[0,\pi/3]$, then $\mathcal{I}_{0}^{+} \cap \mathcal{I}_{0}^{\star} \cap \mathcal{I}_{-1}^{-}=\emptyset$ except when $\theta=\pi/3$,
in which case the triple intersection is the point $[e^{i2\pi/3},-\sqrt{3}]\in\partial \hc$. Moreover, this point is the parabolic fixed point of $T^{-1}S^2$.
\end{lem}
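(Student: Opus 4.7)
My plan is to reduce the problem to a single algebraic inequality via a symmetry of the functions from Lemma \ref{lem:functions}, then settle that inequality by combining monotonicity in $\theta$ with a key factorization. By Lemma \ref{lem:functions}, a point $q=q(\alpha,\beta,w)\in\mathcal{I}_0^+$ lies in $\mathcal{I}_0^\star\cap\mathcal{I}_{-1}^-$ iff $f_0^\star(\alpha,\beta,w)=0$ and $f_{-1}^-(\alpha,\beta,w)=0$, with $\alpha\in[-\pi/2,\pi/2]$, $\beta\in[0,\pi)$, $w\in[-\sqrt{\cos\alpha},\sqrt{\cos\alpha}]$. Pairing the four cosine terms and applying $\cos A+\cos B=2\cos(\frac{A+B}{2})\cos(\frac{A-B}{2})$, I first obtain
$$f_{-1}^-(\alpha,\beta,w)-f_0^\star(\alpha,\beta,w)=2\sqrt{2}\,w\cos(\beta)\bigl[\cos(\alpha/2+\theta)+2\cos(\alpha/2-\theta)\bigr].$$
For the RHS to vanish one of three factors vanishes: $w=0$ forces the contradiction $f_0^\star=1+\cos\alpha>0$; the bracket equals $3\cos(\alpha/2)\cos\theta+\sin(\alpha/2)\sin\theta$ and vanishes only when $|\tan(\alpha/2)|\geq\sqrt{3}$ (outside range for $\theta\in(0,\pi/3]$); hence $\beta=\pi/2$.

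With $\beta=\pi/2$, the equation $f_0^\star(\alpha,\pi/2,w)=0$ simplifies via $\cos(\pi/2\pm x)=\mp\sin x$ to the quadratic
$$2w^2+\sqrt{2}\,B(\alpha,\theta)\,w+2\cos^2(\alpha/2)=0,\qquad B(\alpha,\theta):=\sin(\alpha/2)\cos\theta-3\cos(\alpha/2)\sin\theta,$$
whose roots have positive product $\cos^2(\alpha/2)$. Setting
$$H(\alpha,\theta):=(6\cos^2(\alpha/2)-2)^2-2B(\alpha,\theta)^2\cos\alpha=f_0^\star(\alpha,\pi/2,\sqrt{\cos\alpha})\,f_0^\star(\alpha,\pi/2,-\sqrt{\cos\alpha}),$$
the quadratic's vertex $w^\ast=-\sqrt{2}B/4$ satisfies $|w^\ast|^2=B^2/8\geq\cos^2(\alpha/2)\geq\cos\alpha$ whenever the discriminant is non-negative, so the vertex lies outside $[-\sqrt{\cos\alpha},\sqrt{\cos\alpha}]$ and the quadratic is monotone on this interval. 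Hence the triple intersection is nonempty in $\hc\cup\partial\hc$ iff $H(\alpha,\theta)\leq 0$, and lies on $\partial\hc$ iff $H(\alpha,\theta)=0$. The lemma reduces to showing $H\geq 0$ on $[-\pi/2,\pi/2]\times[0,\pi/3]$ with equality only at $(-\pi/3,\pi/3)$.

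I prove this positivity in two moves. For $\alpha\in[-\pi/2,0)$, $B<0$ throughout $[0,\pi/3]$ and $\partial B/\partial\theta=-\sin(\alpha/2)\sin\theta-3\cos(\alpha/2)\cos\theta$ vanishes only at $\tan\theta=-3\cot(\alpha/2)\geq 3$, i.e., outside $[0,\pi/3]$; combined with $\cos\alpha\geq 0$, this gives $\partial H/\partial\theta=-4B(\partial B/\partial\theta)\cos\alpha\leq 0$, so $H(\alpha,\theta)\geq H(\alpha,\pi/3)$. For $\alpha\in[0,\pi/2]$, the alternative expression $B^2=(\sqrt{1-c}\cos\theta-3\sqrt{c}\sin\theta)^2$ (with $c=\cos^2(\alpha/2)$) is bounded above by its $\alpha<0$ counterpart $(\sqrt{1-c}\cos\theta+3\sqrt{c}\sin\theta)^2$ (as $(a-b)^2\leq(a+b)^2$ for $a,b\geq 0$), giving $H(\alpha,\theta)\geq H(-\alpha,\theta)$. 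It therefore suffices to handle $\theta=\pi/3$ with $\alpha\in[-\pi/2,0]$. Writing $s=\sin(\alpha/2)$, $d=\cos(\alpha/2)$, a direct expansion yields the clean factorization
$$2H(\alpha,\pi/3)=(\sqrt{3}\,s+d)^2\,(3s^2-4\sqrt{3}\,sd+5d^2).$$
The second factor has discriminant $(4\sqrt{3}d)^2-4\cdot 3\cdot 5\,d^2=-12d^2<0$, so it is strictly positive. Hence $H(\alpha,\pi/3)\geq 0$ with equality iff $\sqrt{3}\sin(\alpha/2)+\cos(\alpha/2)=0$, i.e., $\alpha=-\pi/3$.

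Finally, at the unique solution $(\alpha,\beta,w)=(-\pi/3,\pi/2,1/\sqrt{2})$, formula (\ref{eq:geog-coor-plus-0}) produces the standard lift $[-e^{i\pi/3},e^{i 2\pi/3},1]^T$, with Heisenberg coordinates $[e^{i 2\pi/3},-\sqrt{3}]$. Since $T^{-1}S^2=I_1I_3I_2I_3$ is unipotent at $\theta=\pi/3$ by Proposition \ref{prop:trianle}, it has a unique fixed point on $\partial\hc$; a direct matrix multiplication using the explicit forms of $S$ and $T$ from Section \ref{sec:parameter} verifies that this lift is fixed by $T^{-1}S^2$, identifying the triple intersection point as the parabolic fixed point. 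The principal obstacle is the factorization $2H(\alpha,\pi/3)=(\sqrt{3}s+d)^2(3s^2-4\sqrt{3}sd+5d^2)$: the factor $(\sqrt{3}s+d)^2$ is forced by knowing $\alpha=-\pi/3$ is a double zero, but explicit polynomial division is needed to extract the definite second factor, and the $\theta$-monotonicity is essential to leverage this endpoint identity across the full parameter range.
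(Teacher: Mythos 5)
Your proof is correct and follows essentially the same route as the paper: the same reduction via Lemma \ref{lem:functions} to $\beta=\pi/2$, the same quadratic $2w^2+\sqrt{2}\,B w+2\cos^2(\alpha/2)$ in $w$, and the same monotonicity-in-$\theta$ reduction to the boundary case $\theta=\pi/3$. The only variation is in the final positivity certificate: the paper shows the single endpoint value $f_0^{\star}(\alpha,\pi/2,\sqrt{\cos\alpha})$ is non-negative via a sum of two squares (an identity that works uniformly in $\alpha$, so no reflection step $\alpha\mapsto-\alpha$ is needed), whereas you show the product $H$ of the two endpoint values is non-negative via the factorization $(\sqrt{3}s+d)^2(3s^2-4\sqrt{3}sd+5d^2)$; both identities check out and locate the unique degenerate point at $(\alpha,\theta)=(-\pi/3,\pi/3)$.
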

\begin{proof}
Suppose that $q \in \mathcal{I}_{0}^{+}$. Using Lemma \ref{lem:functions},
the geographic coordinates $(\alpha,\beta,w)$ of $q \in \mathcal{I}_{0}^{+} \cap \mathcal{I}_{0}^{\star} \cap \mathcal{I}_{-1}^{-}$ should satisfy the following equation
\begin{equation}\label{eq:lemma0}
  2w^2+1+\cos(\alpha)-\sqrt{2}w\cos\left(-\frac{\alpha}{2}+\beta-\theta\right)-2\sqrt{2}w\cos\left(\frac{\alpha}{2}+\beta-\theta\right) = 0,
\end{equation}
\begin{equation}\label{eq:lemma1}
2w^2+1+\cos(\alpha)+\sqrt{2}w\cos\left(\frac{\alpha}{2}+\beta+\theta\right)+2\sqrt{2}w\cos\left(-\frac{\alpha}{2}+\beta+\theta\right) = 0.
\end{equation}
Subtracting the two equations (\ref{eq:lemma0}) and (\ref{eq:lemma1}), we have
$$
2\sqrt{2}w\cos(\beta)\left(\cos(\alpha/2+\theta)+2\cos(\alpha/2-\theta) \right)=0.
$$
This implies that either $w=0$ or $\beta=\pi/2$, since $\left(\cos(\alpha/2+\theta)+2\cos(\alpha/2-\theta) \right)\neq 0$ for $\theta\in [0,\pi/3]$.
We know that the points with $w=0$ lie in the meridian with $\beta=\pi/2$.
Therefore, a necessary condition for $q \in \mathcal{I}_{0}^{+} \cap \mathcal{I}_{0}^{\star} \cap \mathcal{I}_{-1}^{-}$ is that $\beta=\pi/2$.

Substituting $\beta=\pi/2$ into the equation (\ref{eq:lemma0}) and simplifying, we have
\begin{equation}\label{eq3}
  2w^2+2\cos^2(\alpha/2)+\sqrt{2}w\left( \sin(\alpha/2)\cos(\theta)-3\cos(\alpha/2)\sin(\theta) \right)=0.
\end{equation}
Let $b(\alpha,\theta)=\sin(\alpha/2)\cos(\theta)-3\cos(\alpha/2)\sin(\theta)$. It is easy to see that for every $\alpha$, the function $\theta\longmapsto b(\alpha,\theta)$ is decreasing on $[0,\pi/3]$.

The left hand side of the equation (\ref{eq3}) can be seen as a quadratic function of $w$ with positive leading coefficient.
Thus the equation (\ref{eq3}) has at least one solution only if $b^2-8\cos^2(\alpha/2)\geq 0$, that is $b\geq 2\sqrt{2}\cos(\alpha/2)$ (it is impossible since $b\leq b(\alpha,0)=\sin(\alpha/2)$) or $b\leq -2\sqrt{2}\cos(\alpha/2)$. Since $\sqrt{\cos(\alpha)}\leq\cos(\alpha/2)$, we have $b+2\sqrt{2}\sqrt{\cos(\alpha)}\leq b+2\sqrt{2}\cos(\alpha/2)\leq 0$. This means that the symmetry axes of the quadratic function lie on the right hand side of $w=\sqrt{\cos(\alpha)}$.

Besides, one can compute that
\begin{eqnarray*}
  \lefteqn{b\sqrt{\cos(\alpha)}+\sqrt{2}\left( \cos(\alpha)+\cos^2(\alpha/2) \right)} \\
   &\geq& \left( \sin(\alpha/2)\cos(\pi/3)-3\cos(\alpha/2)\sin(\pi/3) \right)\sqrt{\cos(\alpha)}+\sqrt{2}\left( \cos(\alpha)+\cos^2(\alpha/2) \right) \\
   &=& \frac{\sqrt{2}}{2}\left( \frac{\sqrt{\cos(\alpha)}}{2}+\frac{\sqrt{2}}{2}\sin(\alpha/2) \right)^2+\frac{3\sqrt{2}}{2}\left( \frac{\sqrt{3}}{2}\sqrt{\cos(\alpha)}-\frac{\sqrt{2}}{2}\cos(\alpha/2) \right)^2.
\end{eqnarray*}
Then $b\sqrt{\cos(\alpha)}+\sqrt{2}\left( \cos(\alpha)+\cos^2(\alpha/2) \right)\geq 0$. If it is $0$, then $\alpha=-\pi/3$ and $\theta=\pi/3$.
It means that for $w\in [-\sqrt{\cos(\alpha)},\sqrt{\cos(\alpha)}]$ the equation (\ref{eq3}) has no solution except when $\theta=\pi/3$ and $\alpha=-\pi/3$, in which case $w=\sqrt{\cos(\alpha)}=\sqrt{2}/2$.

Hence $q \in \mathcal{I}_{0}^{+} \cap \mathcal{I}_{0}^{\star} \cap \mathcal{I}_{-1}^{-}$ if and only if $\theta=\pi/3$, $\alpha=-\pi/3$ and $w=\sqrt{\cos(\alpha)}=\sqrt{2}/2$.
When $\theta=\pi/3$, $T^{-1}S^2$ is unipotent and its fixed point is the eigenvector with eigenvalue $1$. One can compute that its fixed point is $[e^{i2\pi/3},-\sqrt{3}] \in \partial\hc$, which equals to the point $q(-\pi/3,\pi/2,\sqrt{2}/2)$.
\end{proof}

\begin{figure}
\begin{center}
\begin{tikzpicture}
\node at (-2.5,0){\includegraphics[width=5cm,height=5cm]{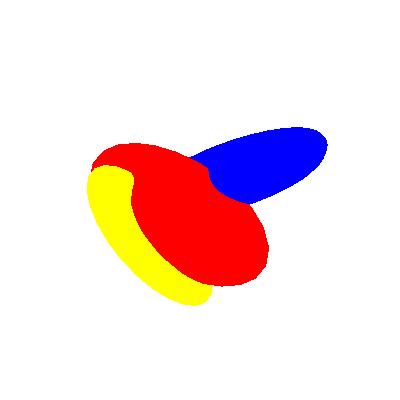}};
\node at (2.5,0){\includegraphics[width=5cm,height=5cm]{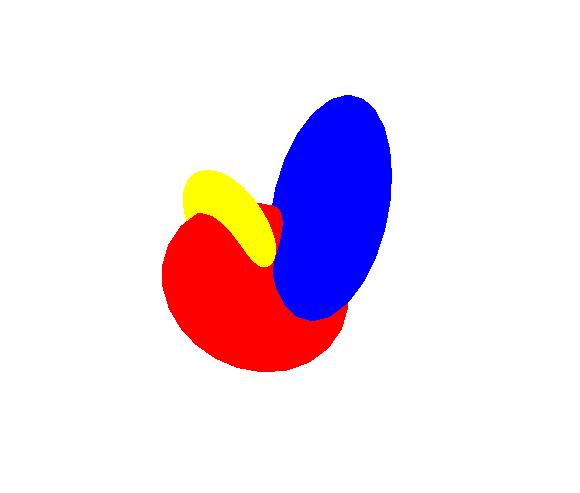}};
\coordinate [label=right:$\mathcal{I}_{0}^{-}$] (q3) at (3.4,0.2);
\coordinate [label=right:$\mathcal{I}_{0}^{+}$] (q3) at (-1.7,-0.9);
\coordinate [label=right:$\mathcal{I}_{0}^{\star}$] (q3) at (1,1);
\coordinate [label=right:$\mathcal{I}_{0}^{-}$] (q3) at (-1.7,1.3);
\coordinate [label=right:$\mathcal{I}_{0}^{+}$] (q3) at (1,-1);
\coordinate [label=right:$\mathcal{I}_{0}^{\star}$] (q3) at (-4,-1.3);
\end{tikzpicture}
\end{center}
  \caption{The ideal boundaries of the three spheres $\mathcal{I}_0^{+}$ , $\mathcal{I}_{-1}^{-}$  and $ \mathcal{I}_0^{\star}$ on $\partial\hc$ (On the left is the case when $\theta=0$ and on the right is $\theta=\pi/3$).}
  \label{fig:double}
\end{figure}

\begin{prop}\label{prop:s0star1}
Suppose that $\theta\in[0,\pi/3]$, then
\begin{enumerate}
\item The intersection $\mathcal{I}_{0}^{\star} \cap \mathcal{I}_{0}^{\diamond}$ lie in the interior of $\mathcal{I}_{0}^{+}$,
\item The intersection $\mathcal{I}_{0}^{\star} \cap \mathcal{I}_{-1}^{-}$ either is empty or lie in the interior of $\mathcal{I}_{0}^{+}$. Furthermore, when $\theta=\pi/3$,
      there is a unique point on the ideal boundary of $\mathcal{I}_{0}^{\star} \cap \mathcal{I}_{-1}^{-}$ on $\partial \hc$, which is fixed by $T^{-1}S^2$,
      lying on the ideal boundary of $\mathcal{I}_{0}^{+}$.
\end{enumerate}
\end{prop}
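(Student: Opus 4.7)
The plan is to reduce both containments to single-point checks by combining the connectedness of Giraud disks (Proposition \ref{prop:connect}) with a triple-intersection analysis in the spirit of Lemma \ref{lem:triple}. In each case, if the relevant Giraud disk can be shown to avoid $\mathcal{I}_0^+$ throughout the interior of $\hc$, then connectedness forces it to lie entirely on one side of $\mathcal{I}_0^+$, and a single test-point calculation identifies which side.

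For part (2), Lemma \ref{lem:triple} already supplies exactly what is needed: the triple intersection $\mathcal{I}_0^+ \cap \mathcal{I}_0^\star \cap \mathcal{I}_{-1}^-$ is empty for $\theta \in [0, \pi/3)$ and consists of the single boundary point $[e^{i2\pi/3}, -\sqrt{3}] \in \partial\hc$ (the parabolic fixed point of $T^{-1}S^2$) when $\theta = \pi/3$. Hence $\mathcal{I}_0^\star \cap \mathcal{I}_{-1}^-$ does not cross $\mathcal{I}_0^+$ in the interior of $\hc$, and I would finish by evaluating the Cygan distance to $[0,0]$ at one explicit point of the intersection (a convenient choice is a point on the $\tau$-invariant meridian $\beta = \pi/2$, or a degenerate limit at $\theta = 0$) and verifying it is strictly less than $\sqrt{2}$. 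The tangency statement at $\theta = \pi/3$ then reads off directly from Lemma \ref{lem:triple}.

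For part (1), I would first establish the analogue of Lemma \ref{lem:triple} for the triple $\mathcal{I}_0^+ \cap \mathcal{I}_0^\star \cap \mathcal{I}_0^\diamond$. The antiholomorphic involution $\tau$ from Proposition \ref{prop:symmetry} fixes $\mathcal{I}_0^+$ and swaps $\mathcal{I}_0^\star$ with $\mathcal{I}_0^\diamond$; a direct calculation on the lift in \eqref{eq:geog-coor-plus-0} shows $\tau$ acts on the geographic coordinates of $\mathcal{I}_0^+$ (up to the standard range conventions) by $(\alpha, \beta, w) \mapsto (-\alpha, \pi - \beta, w)$. This produces a function $f_0^\diamond := f_0^\star \circ \tau$ on $\mathcal{I}_0^+$ which measures position relative to $\mathcal{I}_0^\diamond$ analogously to Definition \ref{def:functions}. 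Imposing $f_0^\star = f_0^\diamond = 0$ and subtracting reduces the triple intersection to a single trigonometric equation, which becomes a quadratic in $w$ whose discriminant and endpoint evaluations can be analyzed exactly as in Lemma \ref{lem:triple} to show that there is no admissible solution with $w \in [-\sqrt{\cos\alpha}, \sqrt{\cos\alpha}]$. Connectedness of the Giraud disk together with one test-point check then puts $\mathcal{I}_0^\star \cap \mathcal{I}_0^\diamond$ in the interior of $\mathcal{I}_0^+$.

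The main obstacle is the new triple-intersection computation in part (1): although the $\tau$-symmetry considerably shortens the work by collapsing two defining equations to one, the resulting quadratic-in-$w$ inequality has the same delicate discriminant behaviour as Lemma \ref{lem:triple} and must be handled uniformly across $\theta \in [0, \pi/3]$, with special attention at the threshold $\theta = \pi/3$ where additional degeneracies related to the parabolicity of $T^{-1}S^2$ might appear. Choosing test points where the relevant Cygan distance expressions factor cleanly—those lying on the spines, or on the $\tau$-invariant Lagrangian meridians—will be essential to keep the final verification tractable.
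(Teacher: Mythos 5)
Your treatment of part (2) is essentially the paper's argument: Lemma \ref{lem:triple} gives that the triple intersection $\mathcal{I}_{0}^{+}\cap\mathcal{I}_{0}^{\star}\cap\mathcal{I}_{-1}^{-}$ is empty for $\theta<\pi/3$ and a single ideal point at $\theta=\pi/3$, connectedness of the Giraud disk (Proposition \ref{prop:connect}) then forces the disk onto one side of $\mathcal{I}_{0}^{+}$, and a test point decides which. One caution: your proposed ``degenerate limit at $\theta=0$'' cannot supply the test point, because at $\theta=0$ the intersection $\mathcal{I}_{0}^{\star}\cap\mathcal{I}_{-1}^{-}$ is empty; the paper instead exhibits an explicit point $p_{0}$ of the Giraud disk at $\theta=\pi/3$ (the parameters $\alpha=\pi/6$, $\beta=0$ in its parametrization), checks $4|z|^{2}<4$ there, and handles $\theta<\pi/3$ by a continuity-in-$\theta$ argument combined with the emptiness of the triple intersection.

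For part (1) you take a genuinely different route. The paper does not prove any triple-intersection lemma here: it parametrizes the Giraud disk $\mathcal{I}_{0}^{\star}\cap\mathcal{I}_{0}^{\diamond}$ directly by the two phases $\alpha,\beta$ of the defining Cygan equations, solves $z=(e^{i\beta}-e^{i\alpha})/(4\cos\theta)$, and establishes the single global inequality $(|z|^{2}+u)^{2}+t^{2}<4$ on the whole disk, which by \eqref{eq:cygan-sphere} is precisely membership in the interior of $\mathcal{I}_{0}^{+}$; no connectedness or test point is needed. Your alternative (show $\mathcal{I}_{0}^{+}\cap\mathcal{I}_{0}^{\star}\cap\mathcal{I}_{0}^{\diamond}=\emptyset$, then invoke connectedness and one test point) is sound in outline and more uniform with Lemma \ref{lem:triple}, and your formula $\tau(\alpha,\beta,w)=(-\alpha,\pi-\beta,w)$ is correct, yielding
$$
f_{0}^{\diamond}(\alpha,\beta,w)=2w^{2}+1+\cos(\alpha)+\sqrt{2}w\cos(-\alpha/2+\beta+\theta)+2\sqrt{2}w\cos(\alpha/2+\beta+\theta).
$$
But the pivotal computation is deferred, and it is not ``exactly as in Lemma \ref{lem:triple}'': subtracting gives
$$
f_{0}^{\diamond}-f_{0}^{\star}=2\sqrt{2}\,w\cos(\theta)\left(3\cos(\alpha/2)\cos(\beta)-\sin(\alpha/2)\sin(\beta)\right),
$$
so the locus is $w=0$ or $\tan(\beta)\tan(\alpha/2)=3$ --- a curve in the $(\alpha,\beta)$-square rather than the fixed meridian $\beta=\pi/2$ that makes the discriminant and endpoint analysis of Lemma \ref{lem:triple} tractable. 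Substituting this $\alpha$-dependent $\beta$ back into $f_{0}^{\star}=0$ still gives a quadratic in $w$, but with coefficients varying along the curve, and that analysis must actually be carried out (uniformly in $\theta\in[0,\pi/3]$) before your part (1) is a proof. As written, this step is a genuine gap; the paper's direct estimate on the Giraud disk sidesteps it entirely.
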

\begin{proof}
(1) Let $p=(z,t,u)\in \mathcal{I}_{0}^{\star} \cap \mathcal{I}_{0}^{\diamond}$, then $p$ satisfies the equations
\begin{eqnarray*}
  \left| |z-e^{i\theta}|^2+u-i\left(t+2{\rm{Im}}(ze^{-i\theta})\right) \right| &=& 1 \\
  \left| |z+e^{-i\theta}|^2+u-i\left(t+2{\rm{Im}}(-ze^{i\theta})\right) \right| &=& 1.
\end{eqnarray*}
Set $z=|z|e^{i\phi}$, by simplifying, we have
\begin{eqnarray*}
  \left| |z|^2+1+u-2|z|\cos(\phi-\theta)-i(t+2|z|\sin(\phi-\theta)) \right| &=& 1 \\
  \left| |z|^2+1+u+2|z|\cos(\phi+\theta)-i(t-2|z|\sin(\phi+\theta)) \right| &=& 1.
\end{eqnarray*}
Now set
\begin{eqnarray}
\label{eq1}   |z|^2+1+u-2|z|\cos(\phi-\theta)-i(t+2|z|\sin(\phi-\theta))  &=& e^{i\alpha} \\
\label{eq2}   |z|^2+1+u+2|z|\cos(\phi+\theta)-i(t-2|z|\sin(\phi+\theta))  &=& e^{i\beta}.
\end{eqnarray}
Since
$$\cos{\alpha}=|z|^2+1+u-2|z|\cos(\phi-\theta) = \left(|z|-\cos(\phi-\theta)\right)^2+\sin^2(\phi-\theta)+u \geq 0$$
and
$$\cos{\beta}=|z|^2+1+u+2|z|\cos(\phi+\theta) =\left(|z|+\cos(\phi+\theta)\right)^2+\sin^2(\phi+\theta)+u \geq 0,$$
we have $-\pi/2 \leq \alpha \leq \pi/2$ and $-\pi/2 \leq \beta \leq \pi/2$. Thus it implies that $\cos(\beta/2-\alpha/2)\geq 0$.
By computing the difference of the equations (\ref{eq1}) and (\ref{eq2}), we have
\begin{equation}\label{eq:coord-z}
  z=\frac{e^{i\beta}-e^{i\alpha}}{4\cos(\theta)}=\pm \frac{\sin(\beta/2-\alpha/2)}{2\cos(\theta)}e^{i(\pm \pi/2 +\beta/2+\alpha/2)}.
\end{equation}
Thus $\phi=\pm \pi/2 +\beta/2+\alpha/2$.
Therefore,
\begin{eqnarray*}
  \lefteqn{
  (|z|^2+u)^2+t^2  } \\
   &=& \left(\cos(\alpha)-1+2|z|\cos(\phi-\theta)\right)^2+\left(\sin(\alpha)+2|z|\sin(\phi-\theta)\right)^2 \\
   &=& 2+4|z|^2-2\cos(\alpha)-4|z|\cos(\phi-\theta)+4|z|\cos(\phi-\theta-\alpha) \\
   &\leq& 2+4|z|^2-2\left(|z|^2+1-2|z|\cos(\phi-\theta)\right)-4|z|\cos(\phi-\theta)+4|z|\cos(\phi-\theta-\alpha) \\
   &=& 2|z|^2+4|z|\cos(\phi-\theta-\alpha) \\
   &=& 2|z|^2+4|z|\cos(\pm \pi/2 -\theta +\beta/2-\alpha/2) \\
   &=& 2|z|^2+4|z|\left( \pm\sin(\theta)\cos(\beta/2-\alpha/2) \mp \cos(\theta)\sin(\beta/2-\alpha/2) \right) \\
   &=& \frac{\sin^2(\beta/2-\alpha/2)}{2\cos^2(\theta)}+\tan(\theta)\sin(\beta-\alpha)-2\sin^2(\beta/2-\alpha/2).
\end{eqnarray*}
Since $\theta\in[0,\pi/3]$, we have $\frac{\sin^2(\beta/2-\alpha/2)}{2\cos^2(\theta)} \leq 2\sin^2(\beta/2-\alpha/2)$ and $\tan(\theta)\sin(\beta-\alpha) \leq \sqrt{3}$. This implies that $(|z|^2+u)^2+t^2 <4$.
It means that the intersection $\mathcal{I}_{0}^{\star} \cap \mathcal{I}_{0}^{\diamond}$ lie in the interior of $\mathcal{I}_{0}^{+}$.

(2) Suppose that $p=(z,t,u)\in \mathcal{I}_{0}^{\star} \cap \mathcal{I}_{-1}^{-}$, then $p$ satisfies the equations
\begin{eqnarray*}
 1 &=& \left| |z-e^{i\theta}|^2+u-i\left(t+2{\rm{Im}}(ze^{-i\theta})\right) \right| = \left| |z|^2+u+1-it-2ze^{-i\theta} \right| \\
 2 &=& \left| |z+2e^{-i\theta}|^2+u-i\left(t+2{\rm{Im}}(-2ze^{i\theta})\right) \right| =  \left| |z|^2+u+4-it+4ze^{i\theta} \right|.
\end{eqnarray*}
Now set
\begin{eqnarray}
\label{eq4}   |z|^2+u+1-it-2ze^{-i\theta}  &=& e^{i\beta} \\
\label{eq5}    |z|^2+u+4-it+4ze^{i\theta}  &=& 2e^{i\alpha}.
\end{eqnarray}
By computing the difference of the equations (\ref{eq4}) and (\ref{eq5}), we have
\begin{eqnarray}\label{eq6}
  z &=& \frac{2e^{i\alpha}-e^{i\beta}-3}{4e^{i\theta}+2e^{-i\theta}}.
\end{eqnarray}
According to equation (\ref{eq4}), we have
\begin{eqnarray}
\label{eq7}  u &=& \cos(\beta)-\left| ze^{-i\theta}-1 \right|^2 \\
\label{eq8}  t &=& -\sin(\beta)-2{\rm{Im}}(ze^{-i\theta}).
\end{eqnarray}
Since
$$\cos{\beta}=u+\left| ze^{-i\theta}-1 \right|^2 \geq 0$$
and
$$2\cos{\alpha}=u+\left| ze^{i\theta}+2 \right|^2 \geq 0,$$
we have $-\pi/2 \leq \alpha \leq \pi/2$ and $-\pi/2 \leq \beta \leq \pi/2$.

Now let us consider the case when $\theta=\pi/3$. Substituting $\alpha=\pi/6$ and $\beta=0$ into the equations (\ref{eq6}), (\ref{eq7}) and (\ref{eq8}), we obtain the point
$$p_0=\left( \left(\frac{\sqrt{3}}{3}-1\right)+i\frac{3+\sqrt{3}}{6},\frac{3-7\sqrt{3}}{6}, \frac{-13+8\sqrt{3}}{6} \right) \in \mathcal{I}_{0}^{\star} \cap \mathcal{I}_{-1}^{-}.$$
One can compute that $p_0$ lies in the interior of $\mathcal{I}_{0}^{+}$,
since
$$
\left||z|^2+u-it\right|^2=\left|e^{i\beta}-1+2ze^{-i\theta}\right|^2=4|z|^2=20/3-2\sqrt{3}<4.
$$
We know that the intersection $\mathcal{I}_{0}^{\star} \cap \mathcal{I}_{-1}^{-}$ is connected from Proposition \ref{prop:connect}. Thus, according to Lemma \ref{lem:triple}, $\mathcal{I}_{0}^{\star} \cap \mathcal{I}_{-1}^{-}$ lie in the interior of $\mathcal{I}_{0}^{+}$ except the point $[e^{i2\pi/3},-\sqrt{3}]$, which lies on the ideal boundary of $\mathcal{I}_{0}^{+}$. See Figure \ref{fig:double}.

Observe that coordinates of the centers of $\mathcal{I}_{0}^{\star}$ and $\mathcal{I}_{-1}^{-}$ are continues on $\theta$. Thus the geometric positions of the spheres $\mathcal{I}_{0}^{\star}$ and $\mathcal{I}_{-1}^{-}$ are continuously moved on $\theta$. When $\theta=0$, since the Cygan distance between the centers of $\mathcal{I}_{0}^{\star}$ and $\mathcal{I}_{-1}^{-}$ is bigger than the sum of their radii, one can see that $\mathcal{I}_{0}^{\star} \cap \mathcal{I}_{-1}^{-}=\emptyset$. When $\theta=\pi/3$, we have shown that $\mathcal{I}_{0}^{\star} \cap \mathcal{I}_{-1}^{-}$ lies in the interior of $\mathcal{I}_{0}^{+}$. We also have $\mathcal{I}_{0}^{+} \cap \mathcal{I}_{0}^{\star} \cap \mathcal{I}_{-1}^{-}=\emptyset$ for $\theta\in[0,\pi/3)$ by Lemma \ref{lem:triple}.
Hence, the intersection $\mathcal{I}_{0}^{\star} \cap \mathcal{I}_{-1}^{-}$ is either empty or contained in the interior of $\mathcal{I}_{0}^{+}$.

\end{proof}

\begin{figure}
\begin{center}
\begin{tikzpicture}
\node at (-2.5,0){\includegraphics[width=5cm,height=5cm]{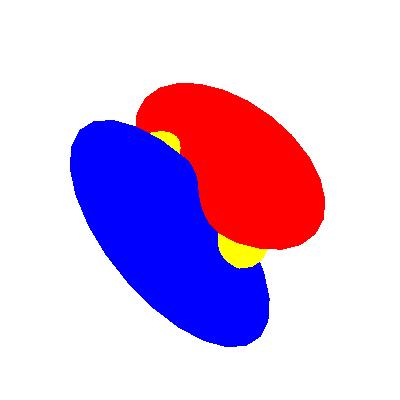}};
\node at (2.5,0){\includegraphics[width=5cm,height=5cm]{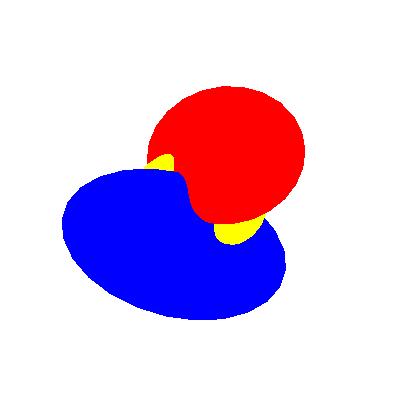}};
\coordinate [label=right:$\mathcal{I}_{0}^{\star}$] (q3) at (3.4,-0.2);
\coordinate [label=right:$\mathcal{I}_{0}^{\star}$] (q3) at (-1.7,-0.9);
\coordinate [label=right:$\mathcal{I}_{0}^{+}$] (q3) at (2,1.6);
\coordinate [label=right:$\mathcal{I}_{0}^{+}$] (q3) at (-1.7,1.3);
\coordinate [label=right:$\mathcal{I}_{0}^{-}$] (q3) at (1,-1.6);
\coordinate [label=right:$\mathcal{I}_{0}^{-}$] (q3) at (-4,-1.3);
\end{tikzpicture}
\end{center}
\caption{The ideal boundaries of the three spheres $\mathcal{I}_0^{+}$ , $\mathcal{I}_{0}^{-}$ and $ \mathcal{I}_0^{\star}$  on $\partial\hc$ (On the left is the case when $\theta=0$ and on the right is $\theta=\pi/3$).}
\label{fig:triple}
\end{figure}

\begin{prop}\label{prop:triple}
Suppose that $\theta\in [0, \pi/3]$. For $k\in\mathbb{Z}$, the three isometric spheres $\mathcal{I}_k^{+}$, $\mathcal{I}_k^{-}$, $\mathcal{I}_k^{\star}$ (respectively, $\mathcal{I}_k^{+}$, $\mathcal{I}_{k-1}^{-}$, $\mathcal{I}_k^{\diamond}$ ) have the following properties.
\begin{itemize}
  \item The intersections $\mathcal{I}_k^{+} \cap \mathcal{I}_k^{-}$, $\mathcal{I}_k^{-} \cap \mathcal{I}_k^{\star}$, and $\mathcal{I}_k^{\star} \cap \mathcal{I}_k^{+}$ (respectively, $\mathcal{I}_k^{+} \cap \mathcal{I}_{k-1}^{-}$, $\mathcal{I}_{k-1}^{-} \cap \mathcal{I}_k^{\diamond}$, $\mathcal{I}_k^{\diamond} \cap \mathcal{I}_k^{+}$) are discs.
  \item The intersection $\mathcal{I}_k^{+}\cap \mathcal{I}_{k}^{-} \cap \mathcal{I}_k^{\star}$ (respectively, $\mathcal{I}_k^{+}\cap \mathcal{I}_{k-1}^{-} \cap \mathcal{I}_k^{\diamond}$ ) is a union of two geodesics which are crossed at the fixed point of $T^{k}ST^{-k}$ (respectively $T^{k}(S^{-1}T)T^{-k}$) in $\hc$ and whose fours endpoints are on $\partial \hc$.
      Moreover, the four rays from the fixed point to the four endpoints are cyclical permuted by $T^{k}ST^{-k}$ (respectively $T^{k}(S^{-1}T)T^{-k}$).
\end{itemize}
\end{prop}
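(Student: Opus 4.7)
The plan is to reduce to $k=0$ via conjugation by $T^{k}$, then to treat both triples uniformly by observing that they take the form $\{\mathcal{I}(g),\mathcal{I}(g^{-1}),\mathcal{I}(g^{2})\}$ for a regular elliptic $g$ of order $4$. For the first triple $g=S$. For the second triple, Lemma \ref{lem:goldman} (applied via the fact that $T$ fixes $q_{\infty}$) gives $\mathcal{I}_{0}^{+}=\mathcal{I}(T^{-1}S)$, $\mathcal{I}_{-1}^{-}=\mathcal{I}(T^{-1}S^{-1}T)=\mathcal{I}(S^{-1}T)$ and $\mathcal{I}_{0}^{\diamond}=\mathcal{I}((S^{-1}T)^{2})$, so $g=S^{-1}T$, which is also of order $4$ by the remark following Definition \ref{def:isometric}.

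The fixed point $p_{0}\in\hc$ of $g$ lies on all three spheres: from $g^{k}p_{0}=p_{0}$ one deduces $|\langle p_{0},q_{\infty}\rangle|=|\langle g^{-k}p_{0},q_{\infty}\rangle|=|\langle p_{0},g^{k}q_{\infty}\rangle|$ for every $k\in\mathbb{Z}$, so $p_{0}\in\mathcal{I}(g^{k})$. In particular all three pairwise intersections contain the interior point $p_{0}$, and by Proposition \ref{prop:connect} together with the Giraud-disc description in the subsequent remark, each pairwise intersection is a topological disc. This handles the first bullet.

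For the triple intersection I would parametrize $\mathcal{I}(g)=\mathcal{I}_{0}^{+}$ in the geographic coordinates $(\alpha,\beta,w)$ of \eqref{eq:geog-coor-plus-0} and, imitating Lemma \ref{lem:functions}, translate membership in the other two spheres into two real-analytic equations $F_{1}=F_{2}=0$. Product-to-sum identities yield a factorization of the form $F_{1}-F_{2}=(\text{const})\cdot w\cdot \sin(\beta-\theta)\cdot \sin(\alpha/2)$, splitting the locus into three subcases. The branch $w=0$ is ruled out since one checks $F_{i}|_{w=0}=1+\cos\alpha>0$ on $[-\pi/2,\pi/2]$. The two surviving branches $\beta=\theta$ (a meridian) and $\alpha=0$ (a slice) are, by Proposition 2.12, respectively a Lagrangian plane and a complex line, hence totally geodesic $2$-planes through $p_{0}$. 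Restricting $F_{2}$ to each branch produces a quadratic in one remaining variable that factors into linear pieces, and the resulting $1$-dimensional arc is identified as a hyperbolic geodesic with its two endpoints on $\partial\hc$. The two geodesics cross transversally at $p_{0}$ because they lie in two distinct totally geodesic planes (one Lagrangian, one complex).

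The cyclic permutation of the four rays follows because $g$ cyclically permutes the four points $q_{\infty},gq_{\infty},g^{2}q_{\infty},g^{-1}q_{\infty}$ and so preserves the set of points Cygan-equidistant from them, i.e.\ the triple intersection. Since $g(p_{0})=p_{0}$ and $g$ has order $4$ with eigenvalues $1,i,-i$ (trace $1$), its differential at $p_{0}$ acts on $T_{p_{0}}\hc$ with eigenvalues $i,-i$; in particular $g^{2}$ acts as $-\mathrm{Id}$, so $g$ acts freely of order $4$ on tangent directions, producing a $4$-cycle on the four rays. The main obstacle is the explicit trigonometric factorization and the identification of each residual $1$-dimensional curve as a geodesic within its totally geodesic $2$-plane; this calculation has to be carried out separately for the two triples, since for $\{\mathcal{I}_{k}^{+},\mathcal{I}_{k-1}^{-},\mathcal{I}_{k}^{\diamond}\}$ the relevant analogues of $f_{0}^{-}$ and $f_{0}^{\star}$ are tailored to $R=S^{-1}T$ rather than to $S$.
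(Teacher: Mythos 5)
Your computational core coincides with the paper's: both parametrize $\mathcal{I}_0^{+}$ by the geographic coordinates \eqref{eq:geog-coor-plus-0}, encode membership in the other two spheres via Lemma \ref{lem:functions}, and obtain exactly the factorization $2w\sin(\alpha/2)\sin(\beta-\theta)$ for the difference of the two equations, leading to the meridian branch $\beta=\theta$ and the slice branch $\alpha=0$. Where you diverge, your choices are mostly gains in economy. (i) You get nonemptiness of the pairwise intersections by showing the fixed point of $g$ lies on every $\mathcal{I}(g^{k})$ using invariance of the Hermitian form and unit-modulus eigenvalues; the paper instead exhibits the explicit point $q(0,\theta,\sqrt{2}/2)$ and checks the equations. (ii) You get the $4$-cycle on the rays abstractly (free action of $dg_{p_0}$, with eigenvalues $i,-i$, on tangent rays), where the paper computes the images of the four endpoint vectors under $S$ directly; your argument is cleaner but tacitly assumes the negative eigenvector of $S$ carries the eigenvalue $1$ rather than $\pm i$ --- true and easy, since the fixed point is polar to both mirrors and each complex involution acts there by $-1$, but it must be said, because with eigenvalue $i$ the differential would have eigenvalues $-1,-i$ and the freeness argument would fail for $g^{2}$. (iii) For the second triple the paper transports everything from the first via the antiholomorphic symmetry $\tau$ of Proposition \ref{prop:symmetry}, which sends $(\mathcal{I}_0^{+},\mathcal{I}_0^{-},\mathcal{I}_0^{\star})$ to $(\mathcal{I}_0^{+},\mathcal{I}_{-1}^{-},\mathcal{I}_0^{\diamond})$; your plan to recompute with $R=S^{-1}T$ works but is longer. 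Two small corrections: on the slice branch the quadratic $2w^{2}-3\sqrt{2}\cos(t)w+2$ does not factor into linear pieces (its discriminant $9\cos^{2}t-8$ is not a square); one solves it and the constraint $w^{2}\le 1$ carves out the two arcs forming one geodesic. And transversality of the crossing does not follow merely from the two branches lying in a complex line and a Lagrangian plane (these can share a tangent direction); it is read off from the explicit parametrizations, which is also where the identification of each branch as a genuine geodesic --- done in the paper via the Cayley transform --- still has to be carried out.
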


\begin{proof}
According to Definition \ref{def:isometric} and Proposition \ref{prop:symmetry}, it suffices to consider the isometric spheres $\mathcal{I}_{0}^{+}$, $\mathcal{I}_{0}^{-}$, $\mathcal{I}_{0}^{\star}$.  See Figure \ref{fig:triple}.

Let $q \in \mathcal{I}_{0}^{+}$. Consider the geographic coordinates $(\alpha,\beta,w)$ of $q$ in (\ref{eq:geog-coor-plus-0}).
By Lemma \ref{lem:functions},
if $q$ lies on $ \mathcal{I}_{0}^{+} \cap \mathcal{I}_{0}^{-}$, then the $\alpha,\beta,w$ should satisfy the equation
\begin{equation}\label{eq:prop0}
   2w^2+1+\cos(\alpha)-\sqrt{2}w\cos(\alpha/2+\beta-\theta)-2\sqrt{2}w\cos(-\alpha/2+\beta-\theta)=0.
\end{equation}
Similarly, if $q$ lies on $\mathcal{I}_{0}^{+}\cap \mathcal{I}_{0}^{\star}$, then the $\alpha,\beta,w$ should satisfy the equation
\begin{equation}\label{eq:prop1}
   2w^2+1+\cos(\alpha)-\sqrt{2}w\cos(-\alpha/2+\beta-\theta)-2\sqrt{2}w\cos(\alpha/2+\beta-\theta)=0
\end{equation}
Thus the intersection $\mathcal{I}_{0}^{+} \cap \mathcal{I}_{0}^{-}$ is the set of solutions of equation (\ref{eq:prop0})
and the intersection $\mathcal{I}_0^{+} \cap \mathcal{I}_0^{\star}$ is the set of solutions of equation (\ref{eq:prop1}).
One can easily verify that the geographic coordinate of the point $q(0,\theta,\sqrt{2}/2)\in\hc$ satisfies the equations (\ref{eq:prop0}) and (\ref{eq:prop1}),
so these intersections  $\mathcal{I}_{0}^{+} \cap \mathcal{I}_{0}^{-}$   and $\mathcal{I}_0^{+} \cap \mathcal{I}_0^{\star}$   are topological discs from Proposition \ref{prop:connect}.

The intersection of these two sets gives the triple intersection $\mathcal{I}_{0}^{+}  \cap \mathcal{I}_{0}^{-} \cap \mathcal{I}_0^{\star}$.
Now let us solve the system of the equations (\ref{eq:prop0}) and (\ref{eq:prop1}).
Let $t=\beta-\theta$. Subtracting the equations (\ref{eq:prop0}) and (\ref{eq:prop1}) and simplifying, we obtain
$$
2w\sin(\alpha/2)\sin(t)=0.
$$
Thus $w=0$ (this is impossible), or $\alpha=0$, or $t=0$. If $t=0$, then setting $\beta=\theta$ in equation (\ref{eq:prop0}), we get
$$
2w^2-3\sqrt{2}\cos\left(\frac{\alpha}{2}\right)w+1+\cos(\alpha)=2\left(w-\frac{\sqrt{2}}{2}\cos\left(\frac{\alpha}{2}\right)\right)\left(w-\sqrt{2}\cos\left(\frac{\alpha}{2}\right)\right)=0.
$$
Note that the solutions of the above equation for $w$ should satisfy $w^2\leq \cos(\alpha)$.
Thus
$$w=\frac{\sqrt{2}}{2}\cos\left(\frac{\alpha}{2}\right) \quad {\rm{with}}\quad \cos(\alpha)\geq \frac{1}{3}.$$
If $\alpha=0$, then equation (\ref{eq:prop0}) becomes to
\begin{eqnarray}\label{eq:prop2}
  2w^2-3\sqrt{2}\cos(t)w+2 &=& 0.
\end{eqnarray}
Note that the solutions of equation (\ref{eq:prop2}) for $w$ should satisfy $w^2\leq\cos(\alpha)=1$. Thus the solutions of equation (\ref{eq:prop2}) are
$$w=\frac{3\cos(t)-\sqrt{9\cos^{2}(t)-8}}{2\sqrt{2}} \quad {\rm{with}} \quad \frac{2\sqrt{2}}{3}\leq \cos(t)\leq 1,$$
and
$$w=\frac{3\cos(t)+\sqrt{9\cos^{2}(t)-8}}{2\sqrt{2}}\quad {\rm{with}} \quad -1 \leq \cos(t) \leq -\frac{2\sqrt{2}}{3}.$$
So, the triple intersection $\mathcal{I}_{0}^{+}  \cap \mathcal{I}_{0}^{-} \cap \mathcal{I}_0^{\star}$ is the union $\mathcal{L}_1 \cup \mathcal{C}_1 \cup \mathcal{C}_2$, where
\begin{equation*}
  \mathcal{L}_1=\left\{ q(\alpha,t+\theta,w)\in \mathcal{I}_{0}^{+} : \cos(\alpha)\geq \frac{1}{3}, t=0, w=\frac{\sqrt{2}}{2}\cos\left(\frac{\alpha}{2}\right) \right\},
\end{equation*}
$$
\mathcal{C}_1=\left\{ q(0,t+\theta,w)\in \mathcal{I}_{0}^{+} :  \frac{2\sqrt{2}}{3}\leq \cos(t)\leq 1, w=\frac{3\cos(t)-\sqrt{9\cos^{2}(t)-8}}{2\sqrt{2}} \right\},
$$
and
$$
\mathcal{C}_2=\left\{ q(0,t+\theta,w)\in \mathcal{I}_{0}^{+} :  -1 \leq \cos(t) \leq -\frac{2\sqrt{2}}{3}, w=\frac{3\cos(t)+\sqrt{9\cos^{2}(t)-8}}{2\sqrt{2}} \right\}.
$$

Note that $\mathcal{L}_1$ lie in a Lagrangian plane of $\mathcal{I}_{0}^{+}$, and $\mathcal{C}_1 \cup \mathcal{C}_2$ lie in a complex line of $\mathcal{I}_{0}^{+}$.
It is obvious that $\mathcal{C}_1$ is an arc. One of its endpoints is $q(0,\theta,\sqrt{2}/2)\in\hc$, which is the fixed point of $S$. The other one is $q(0,\arccos(2\sqrt{2}/3)+\theta,1)\in\partial\hc$. Similarly, $\mathcal{C}_2$ is an arc whose endpoints are $q(0,\theta,\sqrt{2}/2)$ and $q(0,\arccos(-2\sqrt{2}/3)+\theta,-1)\in\partial\hc$. Thus $\mathcal{C}_1 \cup \mathcal{C}_2$ is connected.
The endpoints of $\mathcal{L}_1$ are $q(\arccos(1/3),\theta,\sqrt{3}/3)$ and $q(-\arccos(1/3),\theta,\sqrt{3}/3)$, which are on $\partial \hc$. It is easy to see that $\mathcal{L}_1$ intersects with $\mathcal{C}_1 \cup \mathcal{C}_2$ at the point $q(0,\theta,\sqrt{2}/2)\in\hc$.

\begin{figure}
\begin{center}
\begin{tikzpicture}
\node at (0,0){\includegraphics[width=5cm,height=5cm]{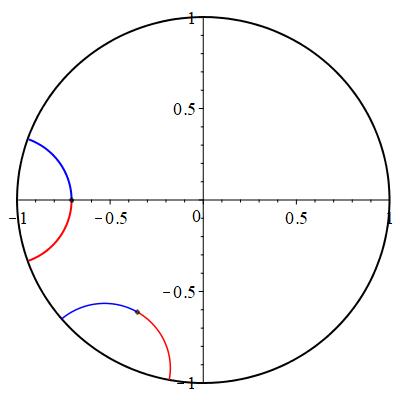}};
\coordinate [label=below:$\mathcal{C}_1$] (c1) at (-1.5,-0.3);
\coordinate [label=below:$\mathcal{C}_2$] (c) at (-1.7,1);
\coordinate [label=left:$\mathcal{L}_1$] (q0) at (-1,0.2);
\fill (-1.62,0) circle (2pt);
\fill (-0.8,-1.4) circle (2pt);
\end{tikzpicture}
\end{center}
  \caption{The triple intersection $\mathcal{I}_0^{+}\cap \mathcal{I}_{0}^{-} \cap \mathcal{I}_0^{\star}$ viewed on the vertical axis in the ball model of $\hc$. The blue curve is $\mathcal{C}_1 $ and the red one is $\mathcal{C}_2$. The black point on the curve is the projection of $\mathcal{L}_1$ on the complex line. The left curve is the case when $\theta=0$ and the one on the lower left is the case when $\theta=\pi/3$.}
  \label{fig:cross}
\end{figure}

Moreover, $\mathcal{C}_1 \cup \mathcal{C}_2$ is a geodesic. In fact, the complex line containing $\mathcal{C}_1 \cup \mathcal{C}_2$ is $\mathcal{C}=\{(-1,z)\in\hc\cup\partial\hc : |z|\leq\sqrt{2}\}$. It is a disc bounded by the circle with center being the origin and radius $\sqrt{2}$.
While $\mathcal{C}_1 \cup \mathcal{C}_2$ lie in the circle with center $3e^{i\theta}/2$ and radius $1/2$ which is orthogonal to the boundary of the complex line.
By the Cayley transform given in Definition \ref{def:cayley}, $\mathcal{C}$ is mapped to the vertical axis $\{(0,z)\in\hc\cup\partial\hc : |z|\leq 1\}$ in the ball model of $\hc$.
Thus $\mathcal{C}$ is isometric to the Poincar\'{e} disc. While $\mathcal{C}_1 \cup \mathcal{C}_2$ is mapped by the  Cayley transform to an arc contained in the circle with center $-3e^{i\theta}/2\sqrt{2}$ and radius $1/2\sqrt{2}$ which is orthogonal to the unit circle. Hence $\mathcal{C}_1 \cup \mathcal{C}_2$ is a geodesic. See Figure \ref{fig:cross}.

By the Cayley transform, $\mathcal{L}_1$ is mapped to $\{(-\tan(\alpha/2)i,-e^{i\theta}/\sqrt{2})\in\hc\cup\partial\hc : \cos(\alpha)\geq 1/3\}$.
Thus $\mathcal{L}_1$ and $\mathcal{C}_1 \cup \mathcal{C}_2$ are crossed at the point $(0,-e^{i\theta}/\sqrt{2})\in\hc$ which is the image of $q(0,\theta,\sqrt{2}/2)$ under the Cayley transform.

All the five points are lifted to the vectors in $\mathbb{C}^3$:
\begin{eqnarray*}
  {\bf q}(\arccos(1/3),\theta,\sqrt{3}/3) &=& \left[
                               \begin{array}{c}
                                 -\left(\frac{1}{3}-i\frac{2\sqrt{2}}{3}\right) \\
                                 \left(\frac{2}{3}-i\frac{\sqrt{2}}{3}\right)e^{i\theta} \\
                                 1 \\
                               \end{array}
                             \right],\\
   {\bf q}(-\arccos(1/3),\theta,\sqrt{3}/3) &=& \left[
                               \begin{array}{c}
                                 -\left(\frac{1}{3}+i\frac{2\sqrt{2}}{3}\right) \\
                                 \left(\frac{2}{3}+i\frac{\sqrt{2}}{3}\right)e^{i\theta} \\
                                 1 \\
                               \end{array}
                             \right],\\
   {\bf q}(0,\arccos(2\sqrt{2}/3)+\theta,1) &=& \left[
                               \begin{array}{c}
                                 -1 \\
                                 \left(\frac{4}{3}+i\frac{\sqrt{2}}{3}\right)e^{i\theta} \\
                                 1 \\
                               \end{array}
                             \right],\\
   {\bf q}(0,\arccos(-2\sqrt{2}/3)+\theta,-1) &=& \left[
                               \begin{array}{c}
                                 -1 \\
                                 \left(\frac{4}{3}-i\frac{\sqrt{2}}{3}\right)e^{i\theta} \\
                                 1 \\
                               \end{array}
                             \right],\\
   {\bf q}(0,\theta,\sqrt{2}/2) &=& \left[
                               \begin{array}{c}
                                 -1 \\
                                 e^{i\theta} \\
                                 1 \\
                               \end{array}
                             \right].
\end{eqnarray*}

Recall that
$$
S=\left[
  \begin{array}{ccc}
    2 & 2e^{-i\theta} & -1 \\
    -2e^{i\theta} & -1 & 0 \\
    -1 & 0 & 0 \\
  \end{array}
\right].
$$
Thus it is easy to check that $S({\bf q}(0,\theta,\sqrt{2}/2))={\bf q}(0,\theta,\sqrt{2}/2)$ and the other four points are cyclical permuted by $S$ as the following
\begin{eqnarray*}
&\left[
                               \begin{array}{c}
                                 -1 \\
                                 \left(\frac{4}{3}+i\frac{\sqrt{2}}{3}\right)e^{i\theta} \\
                                 1 \\
                               \end{array}
                             \right] \underrightarrow{S}
\left[
                               \begin{array}{c}
                                 -\left(\frac{1}{3}-i\frac{2\sqrt{2}}{3}\right) \\
                                 \left(\frac{2}{3}-i\frac{\sqrt{2}}{3}\right)e^{i\theta} \\
                                 1 \\
                               \end{array}
                             \right] \underrightarrow{S}
\left[
                               \begin{array}{c}
                                 -1 \\
                                 \left(\frac{4}{3}-i\frac{\sqrt{2}}{3}\right)e^{i\theta} \\
                                 1 \\
                               \end{array}
                             \right] \underrightarrow{S}\\
&\left[
                               \begin{array}{c}
                                 -\left(\frac{1}{3}+i\frac{2\sqrt{2}}{3}\right) \\
                                 \left(\frac{2}{3}+i\frac{\sqrt{2}}{3}\right)e^{i\theta} \\
                                 1 \\
                               \end{array}
                             \right].
\end{eqnarray*}

Moreover, it is easy to verify that  $\mathcal{C}_1 \cup \mathcal{C}_2=S(\mathcal{L}_1)$, $S^2(\mathcal{L}_1)=\mathcal{L}_1$ and $S^2(\mathcal{C}_1)=\mathcal{C}_2$.
Thus, the four rays from the fixed point to the four endpoints are cyclical permuted by $S$.
\end{proof}

By applying powers of $T$ and the symmetries in Proposition \ref{prop:symmetry} to Proposition \ref{prop:s0plus}, Proposition \ref{prop:s0star} and Proposition \ref{prop:s0star1}, all pairwise intersections of the isometric spheres can be summarized in the following result.
\begin{cor}\label{cor:intersections}
Suppose that $\theta\in [0, \pi/3]$. Let $\mathcal{S}=\{\mathcal{I}_k^{\pm}, \mathcal{I}_k^{\star}, \mathcal{I}_k^{\diamond}: k \in \mathbb{Z} \}$ be the set of all the isometrical spheres. Then for all $k\in\mathbb{Z}$:
\begin{enumerate}
  \item $\mathcal{I}_k^{+}$ is contained in the exterior of all the isometric spheres in $\mathcal{S}$ except $\mathcal{I}_k^{-}$, $\mathcal{I}_{k-1}^{-}$, $\mathcal{I}_k^{\star}$, $\mathcal{I}_{k-1}^{\star}$, $\mathcal{I}_k^{\diamond}$ and $\mathcal{I}_{k+1}^{\diamond}$.
         Moreover, $\mathcal{I}_k^{+} \cap \mathcal{I}_{k-1}^{\star}$ (resp. $\mathcal{I}_k^{+} \cap \mathcal{I}_{k+1}^{\diamond}$) is either empty or contained in the interior of $\mathcal{I}_{k-1}^{-}$ (resp. $\mathcal{I}_{k}^{-}$). When $\theta=\pi/3$, $\mathcal{I}_k^{+} \cap \mathcal{I}_{k-1}^{\star}$ (resp. $\mathcal{I}_k^{+} \cap \mathcal{I}_{k+1}^{\diamond}$) will be tangent with $\mathcal{I}_{k-1}^{-}$ (resp. $\mathcal{I}_{k}^{-}$)  on $\partial \hc$ at the parabolic fixed point of $T^{k}(S^2T)T^{-k}$ (resp. $T^{k}(S^{-1}TS^{-1})T^{-k}$).
  \item $\mathcal{I}_k^{-}$ is contained in the exterior of all the isometric spheres in $\mathcal{S}$ except $\mathcal{I}_k^{+}$, $\mathcal{I}_{k+1}^{+}$,
        $\mathcal{I}_k^{\star}$, $\mathcal{I}_{k+1}^{\star}$, $\mathcal{I}_k^{\diamond}$ and $\mathcal{I}_{k+1}^{\diamond}$.
         Moreover, $\mathcal{I}_k^{-} \cap \mathcal{I}_k^{\diamond}$ (resp. $\mathcal{I}_k^{-} \cap \mathcal{I}_{k+1}^{\star}$) is either empty or contained in the interior of $\mathcal{I}_{k}^{+}$ (resp. $\mathcal{I}_{k+1}^{+}$). When $\theta=\pi/3$, $\mathcal{I}_k^{-} \cap \mathcal{I}_k^{\diamond}$ (resp. $\mathcal{I}_k^{-} \cap \mathcal{I}_{k+1}^{\star}$) will be tangent with $\mathcal{I}_k^{+}$ (resp. $\mathcal{I}_{k+1}^{+}$)  on $\partial \hc$ at the parabolic fixed point of $T^{k}(ST^{-1}S)T^{-k}$ (resp. $T^{k}(S^2T^{-1})T^{-k}$).
  \item $\mathcal{I}_k^{\star}$ is contained in the exterior of all the isometric spheres in $\mathcal{S}$ except $\mathcal{I}_k^{\pm}$,
        $\mathcal{I}_{k+1}^{+}$, $\mathcal{I}_{k-1}^{-}$, $\mathcal{I}_k^{\diamond}$ and $\mathcal{I}_{k+1}^{\diamond}$.
          Moreover, $\mathcal{I}_k^{\star} \cap \mathcal{I}_k^{\diamond}$ (resp. $\mathcal{I}_k^{\star} \cap \mathcal{I}_{k+1}^{\diamond}$) is contained in the interior of $\mathcal{I}_k^{+}$ (resp. $\mathcal{I}_k^{-}$). $\mathcal{I}_k^{\star} \cap \mathcal{I}_{k+1}^{+}$ is described in item (1), and $\mathcal{I}_k^{\star} \cap \mathcal{I}_{k-1}^{-}$ is described in item (2).
         When $\theta=\pi/3$, $\mathcal{I}_k^{\star}$ will be tangent with $\mathcal{I}_{k+1}^{\star}$ (resp. $\mathcal{I}_{k-1}^{\star}$) on $\partial \hc$ at the parabolic fixed point of $T^{k}(S^2T^{-1})T^{-k}$ (resp. $T^{k}(T^{-1}S^2)T^{-k}$).
  \item $\mathcal{I}_k^{\diamond}$ is contained in the exterior of all the isometric spheres in $\mathcal{S}$ except $\mathcal{I}_k^{\pm}$,
        $\mathcal{I}_{k-1}^{\pm}$, $\mathcal{I}_k^{\star}$ and $\mathcal{I}_{k-1}^{\star}$.
          Moreover, $\mathcal{I}_k^{\diamond} \cap \mathcal{I}_k^{\star}$ and $\mathcal{I}_{k}^{\diamond} \cap \mathcal{I}_{k-1}^{\star}$ are described in item (3). $\mathcal{I}_k^{\diamond} \cap \mathcal{I}_k^{-}$ is described in item (2) and $\mathcal{I}_{k}^{\diamond} \cap \mathcal{I}_{k-1}^{+}$ is described in item (1).
         When $\theta=\pi/3$, $\mathcal{I}_k^{\diamond}$ will be tangent with $\mathcal{I}_{k+1}^{\diamond}$ (resp. $\mathcal{I}_{k-1}^{\diamond}$) on $\partial \hc$ at the parabolic fixed point of $T^{k}(T^{-1}S^2)T^{-k}$ (resp. $T^{k}(S^2T^{-1})T^{-k}$).
\end{enumerate}
\end{cor}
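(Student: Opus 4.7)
The plan is to reduce every subclaim of the corollary to the three base results Propositions~\ref{prop:s0plus}, \ref{prop:s0star}, \ref{prop:s0star1} (together with Lemma~\ref{lem:triple} and the triple--intersection analysis of Proposition~\ref{prop:triple}), by exploiting the three symmetries of the isometric sphere arrangement: conjugation by powers of $T$, the antiholomorphic involution $\tau$, and the complex involution $I_2$, both from Proposition~\ref{prop:symmetry}. Since $\mathcal{I}_k^{\bullet}=T^k\mathcal{I}_0^{\bullet}$ by Definition~\ref{def:isometric}, the exterior/interior position of $\mathcal{I}_k^{\bullet}$ relative to $\mathcal{I}_\ell^{\circ}$ is the $T^k$-translate of the relation between $\mathcal{I}_0^{\bullet}$ and $\mathcal{I}_{\ell-k}^{\circ}$, so it suffices to analyze the case $k=0$.

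For item~(1) the disjointness of $\mathcal{I}_0^{+}$ from all $\mathcal{I}_k^{+}$ with $k\neq 0$ was established in the first proposition after Definition~\ref{def:isometric}, while Proposition~\ref{prop:s0plus} pins down exactly the six remaining spheres that can fail to lie in the exterior of $\mathcal{I}_0^{+}$. The refinements that $\mathcal{I}_0^{+}\cap\mathcal{I}_{-1}^{\star}$ and $\mathcal{I}_0^{+}\cap\mathcal{I}_{1}^{\diamond}$ are empty or contained in the interior of the stated $\mathcal{I}_k^{-}$ are obtained by applying a power of $T$ and the symmetry $\tau$ to Proposition~\ref{prop:s0star1}(2), and the tangency at $\theta=\pi/3$ is exactly Lemma~\ref{lem:triple}, which identifies the tangent point as the parabolic fixed point of $T^{-1}S^{2}$. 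Item~(2) follows from item~(1) by conjugating with $I_2$, using the pairings $I_2\colon \mathcal{I}_k^{+}\leftrightarrow\mathcal{I}_{-k}^{-}$ and $\mathcal{I}_k^{\diamond}\leftrightarrow\mathcal{I}_{-k+1}^{\diamond}$ of Proposition~\ref{prop:symmetry}(2) together with $I_2SI_2=S^{-1}$ and $I_2TI_2=T^{-1}$; every parabolic word in item~(2) is then the $I_2$-conjugate of the corresponding word in item~(1). Item~(3) combines Proposition~\ref{prop:s0star} (disjointness and tangency among the $\mathcal{I}_k^{\star}$) with Proposition~\ref{prop:s0star1}(1) (for $\mathcal{I}_0^{\star}\cap\mathcal{I}_0^{\diamond}$) and items~(1)--(2) for the pairings with $\mathcal{I}_k^{\pm}$; item~(4) is deduced from item~(3) by conjugating with $\tau$, which interchanges $\mathcal{I}_k^{\star}$ and $\mathcal{I}_{-k}^{\diamond}$.

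The main obstacle I expect is purely combinatorial bookkeeping: one has to verify, for each of the twelve refined intersection statements, that the chosen composition $T^{k}\circ\tau^{a}\circ I_{2}^{b}$ indeed maps the pair of spheres in the corollary to the pair appearing in one of the base propositions, and that the conjugate of $T^{-1}S^{2}$ (or of $S^{-1}TS^{-1}$, depending on the face) under this composition is exactly the parabolic element named in the corollary. This amounts to checking that the orbit of the fixed point of $T^{-1}S^{2}$ under the group generated by $T$, $\tau$, and $I_2$ is precisely the set $\{T^{k}(S^{2}T)T^{-k},\, T^{k}(S^{-1}TS^{-1})T^{-k},\, T^{k}(ST^{-1}S)T^{-k},\, T^{k}(S^{2}T^{-1})T^{-k},\, T^{k}(T^{-1}S^{2})T^{-k}\}$ listed, using the relations $\tau S\tau=T^{-1}S$, $\tau T\tau=T^{-1}$, $I_{2}SI_{2}=S^{-1}$, $I_{2}TI_{2}=T^{-1}$; unipotence of these conjugate elements is conjugation-invariant and hence automatic at $\theta=\pi/3$. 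No new geometric computation is required beyond what is already in the preceding propositions.
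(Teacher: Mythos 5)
Your proposal is correct and is essentially the paper's own argument: the paper derives the corollary in one sentence by applying powers of $T$ and the symmetries of Proposition \ref{prop:symmetry} to Propositions \ref{prop:s0plus}, \ref{prop:s0star} and \ref{prop:s0star1}, exactly the reduction you describe, and your observation that the named parabolic elements are the conjugates of $T^{-1}S^{2}$ under words in $T$, $\tau$, $I_{2}$ (using $\tau S\tau=T^{-1}S$, $I_{2}SI_{2}=S^{-1}$, etc.) is the right way to pin down the tangency points. One small correction to your bookkeeping: the refinement $\mathcal{I}_{0}^{+}\cap\mathcal{I}_{-1}^{\star}\subset\operatorname{int}\mathcal{I}_{-1}^{-}$ in item (1) is obtained from Proposition \ref{prop:s0star1}(2) by conjugating with $I_{2}$ and then $T^{-1}$ (not with $\tau$, which sends $\star$-spheres to $\diamond$-spheres and instead produces the item (2) statement $\mathcal{I}_{0}^{-}\cap\mathcal{I}_{0}^{\diamond}\subset\operatorname{int}\mathcal{I}_{0}^{+}$), but this is exactly the kind of verification your last paragraph already anticipates.
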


\begin{defn}\label{domain:D}
Let $D$ be the intersection of the closures of the exteriors of all the isometric spheres $\mathcal{I}_k^{+}$,  $\mathcal{I}_k^{-}$, $\mathcal{I}_k^{\star}$ and $\mathcal{I}_k^{\diamond}$, for $k\in\mathbb{Z}$.
\end{defn}

\begin{defn}
For $k\in\mathbb{Z}$,
let $s_k^{+}$ (respectively, $s_k^{-}$, $s_k^{\star}$, and $s_k^{\diamond}$) be the side of $D$ contained in the isometric sphere $\mathcal{I}_k^{+}$ (respectively, $\mathcal{I}_k^{-}$, $\mathcal{I}_k^{\star}$ and $\mathcal{I}_k^{\diamond}$).
\end{defn}

\begin{defn}
A \emph{ridge} is defined to be the 2-dimensional connected intersections of two sides.
\end{defn}

By Corollary \ref{cor:intersections}, the ridges are $s_{k}^{+} \cap s_{k}^{-}$, $s_{k}^{+} \cap s_{k}^{\star}$, $s_{k}^{+} \cap s_{k-1}^{-}$, $s_{k}^{+} \cap s_{k}^{\diamond}$, $s_{k}^{-} \cap s_{k}^{\star}$ and  $s_{k-1}^{-} \cap s_{k}^{\diamond}$ for $k\in\mathbb{Z}$, and the sides and ridges are related as follows:
\begin{itemize}
\item The side $s_k^{+}$ is bounded by the ridges $s_k^{+} \cap s_k^{-}$, $s_k^{+} \cap s_k^{\star}$, $s_k^{+} \cap s_{k-1}^{-}$ and $s_k^{+} \cap s_k^{\diamond}$.
\item The side $s_k^{-}$ is bounded by the ridges $s_k^{+} \cap s_k^{-}$, $s_k^{-} \cap s_k^{\star}$, $s_k^{-} \cap s_{k+1}^{+}$ and $s_0^{-} \cap s_{k+1}^{\diamond}$.
\item The side $s_k^{\star}$ is bounded by the ridges $s_k^{+} \cap s_k^{\star}$ and $s_k^{-} \cap s_k^{\star}$.
\item The side $s_k^{\diamond}$ is bounded by the ridges $s_k^{\diamond} \cap s_{k-1}^{-}$ and $s_k^{+} \cap s_k^{\diamond}$.
\end{itemize}

\begin{prop}\label{prop:ridges}
The ridges $s_{k}^{+} \cap s_{k}^{-}$, $s_{k}^{+} \cap s_{k}^{\star}$, $s_{k}^{+} \cap s_{k-1}^{-}$, $s_{k}^{+} \cap s_{k}^{\diamond}$, $s_{k}^{-} \cap s_{k}^{\star}$ and  $s_{k-1}^{-} \cap s_{k}^{\diamond}$ for $k\in\mathbb{Z}$ are all topologically the union of two sectors.
\end{prop}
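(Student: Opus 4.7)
The plan is to reduce, for each of the six ridge types, the problem to a single triple-intersection analysis and then invoke Proposition \ref{prop:triple}. By the symmetries in Proposition \ref{prop:symmetry} it suffices to treat one representative in each orbit: the six ridges split into two orbits of three. The first, containing $s_0^+\cap s_0^-$, $s_0^+\cap s_0^\star$ and $s_0^-\cap s_0^\star$, lives on the three pairwise intersections of $\mathcal{I}_0^+$, $\mathcal{I}_0^-$, $\mathcal{I}_0^\star$; the second, containing $s_0^+\cap s_{-1}^-$, $s_0^+\cap s_0^\diamond$ and $s_{-1}^-\cap s_0^\diamond$, lives on the pairwise intersections of $\mathcal{I}_0^+$, $\mathcal{I}_{-1}^-$, $\mathcal{I}_0^\diamond$.

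I would first do $s_0^+\cap s_0^-$ in detail. Its carrier $\mathcal{I}_0^+\cap\mathcal{I}_0^-$ is a topological disk by (the proof of) Proposition \ref{prop:triple}, and a point on it lies in the ridge iff it is in the closed exterior of every other isometric sphere in $\mathcal{S}$. Corollary \ref{cor:intersections} makes most of these constraints vacuous: any sphere $\mathcal{I}_k^{\bullet}$ disjoint from $\mathcal{I}_0^+$ or $\mathcal{I}_0^-$ contributes no cut, and any sphere whose intersection with $\mathcal{I}_0^+$ is contained in the strict interior of $\mathcal{I}_0^-$ (or vice versa) is disjoint from the disk. The residual candidates $\mathcal{I}_0^\diamond$ and $\mathcal{I}_1^\diamond$ fall in the latter category by Corollary \ref{cor:intersections}, so they impose a uniform sign on the disk, which one checks at the fixed point of $S$ (which lies on the disk and in the exterior of both). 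The only surviving cut comes from $\mathcal{I}_0^\star$, and Proposition \ref{prop:triple} identifies this cut as the union of two geodesic arcs, one Lagrangian and one complex, crossing transversally at the fixed point of $S$ with their other endpoints on $\partial\hc$.

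Using the function $f_0^\star$ of Definition \ref{def:functions} as a signed defining function on the disk, its zero set is exactly this transverse $X$, and adjacent sectors of the complement carry opposite signs (the crossing is a non-degenerate saddle, as can be read off from the explicit parametrization in the proof of Proposition \ref{prop:triple}). Hence $\{f_0^\star\ge 0\}$ is the closure of two opposite sectors, and $s_0^+\cap s_0^-$ is topologically a union of two sectors as claimed. The ridges $s_0^+\cap s_0^\star$ and $s_0^-\cap s_0^\star$ are handled by the very same $X$ viewed on the carriers $\mathcal{I}_0^+\cap\mathcal{I}_0^\star$ and $\mathcal{I}_0^-\cap\mathcal{I}_0^\star$, with the analogous defining functions measuring membership in the third sphere. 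The second orbit is treated identically, now using the triple intersection $\mathcal{I}_0^+\cap\mathcal{I}_{-1}^-\cap\mathcal{I}_0^\diamond$ of Proposition \ref{prop:triple} around the fixed point of $S^{-1}T$.

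The main obstacle will be the vacuity step: ruling out, for each carrier disk, every sphere outside the anticipated triple. This is a finite bookkeeping exercise using Corollary \ref{cor:intersections}, but it is delicate at the parabolic parameter $\theta=\pi/3$, where several pairs of isometric spheres that are disjoint inside $\hc$ become tangent on $\partial\hc$. One must match each such tangency with either an ideal endpoint of the two crossing geodesics from Proposition \ref{prop:triple} or with a parabolic fixed point already identified in Corollary \ref{cor:intersections}, so that no additional transverse cut appears on the disk and the count of sectors remains exactly two.
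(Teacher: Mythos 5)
Your proposal is correct and follows essentially the same route as the paper: reduce via the symmetries to one representative per triple, view each ridge inside the Giraud disk cut by the cross of Proposition \ref{prop:triple} into four sectors, and use the sign of the defining function of the third sphere (the other spheres being ruled out by Corollary \ref{cor:intersections}) to select one opposite pair. The only cosmetic difference is how the sign pattern is certified: the paper evaluates $f_0^{\star}$ on the explicit $I_2$-invariant curve $C_2\cap\Delta$ (obtaining $1-\cos(\alpha)\ge 0$), whereas you appeal to the saddle behaviour of $f_0^{\star}$ on the disk, which indeed holds since on $\{f_0^{-}=0\}$ one has $f_0^{\star}=2\sqrt{2}\,w\sin(\alpha/2)\sin(\beta-\theta)$ with $w>0$.
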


\begin{proof}
The ridge $s_{k}^{+} \cap s_{k}^{-}$ is contained in $\mathcal{I}_k^{+} \cap \mathcal{I}_k^{-}$. According to Proposition \ref{prop:triple}, $\mathcal{I}_k^{+} \cap \mathcal{I}_k^{-}$ is topologically a disc and $\mathcal{I}_k^{+} \cap \mathcal{I}_k^{-} \cap \mathcal{I}_k^{\star}$ is the union of two crossed geodesics. The two crossed geodesics divide the disc into four sectors and one opposite pair of which will lie in the interior of the isometric sphere $\mathcal{I}_k^{\star}$. Thus $s_{k}^{+} \cap s_{k}^{-}$ is the other opposite pair of the four sectors in the disc.
More precisely, up to the powers of $T$, let us consider $s_{0}^{+} \cap s_{0}^{-}$. Let $\Delta$ be the disc $\mathcal{I}_0^{+} \cap \mathcal{I}_0^{-}$ described in the equation (\ref{eq:prop0}) and the two crossed geodesics $\mathcal{L}_1\cup \mathcal{C}_1 \cup \mathcal{C}_2$ are described in Proposition \ref{prop:triple}.
By Proposition \ref{prop:symmetry}, the complex involution $I_2$ preserves $\Delta$ and $\mathcal{L}_1\cup \mathcal{C}_1 \cup \mathcal{C}_2$. Recall that $I_2$ fixing the complex line $C_2$ with polar vector $\bf{n}_2$ described in Section \ref{sec:parameter}. One can compute that the intersection $C_2 \cap \Delta$ is the curve
\begin{equation}\label{eq:ridge}
  C_2 \cap \Delta=\{q(\alpha,\alpha/2 +\theta,\sqrt{2}/2)\in\mathcal{I}_0^{+}: \cos(\alpha)\geq 1/3\}.
\end{equation}
Of course $C_2 \cap \Delta$ intersects with $\mathcal{L}_1\cup \mathcal{C}_1 \cup \mathcal{C}_2$ at the fixed point of $S$, and divides $\Delta$ into two parts. $I_2$ fixes $C_2 \cap \Delta$, and interchanges $\mathcal{L}_1$ and $\mathcal{C}_1\cup \mathcal{C}_2$. Thus $C_2 \cap \Delta$ is contained in the union of two opposite sectors. By Lemma \ref{lem:functions}, $C_2 \cap \Delta$ lie on the closure of the exterior of $\mathcal{I}_0^{\star}$, since $f_0^{\star}(\alpha,\alpha/2+\theta,\sqrt{2}/2)=1-\cos(\alpha)\geq 0$. Therefore, the union of two opposite sectors containing $C_2 \cap \Delta$ is the ridge $s_{0}^{+} \cap s_{0}^{-}$. Moreover, this ridge is preserved by $I_2$.  By using the parametrization of the Giraud disk in \cite{dpp}, it can be instructive to draw figure of the Giraud disk $\mathcal{I}_{0}^{+} \cap \mathcal{I}_{0}^{-}$ and their intersection with the isometric spheres $\mathcal{I}_{-1}^{-}$, $\mathcal{I}_0^{\star}$, $\mathcal{I}_{-1}^{\star}$, $\mathcal{I}_0^{\diamond}$ and $\mathcal{I}_{1}^{\diamond}$. See Figure \ref{fig:sector}.

The other ridges can be described by a similar argument.
\end{proof}


\begin{figure}[ht]
\centering
\includegraphics[scale=0.3]{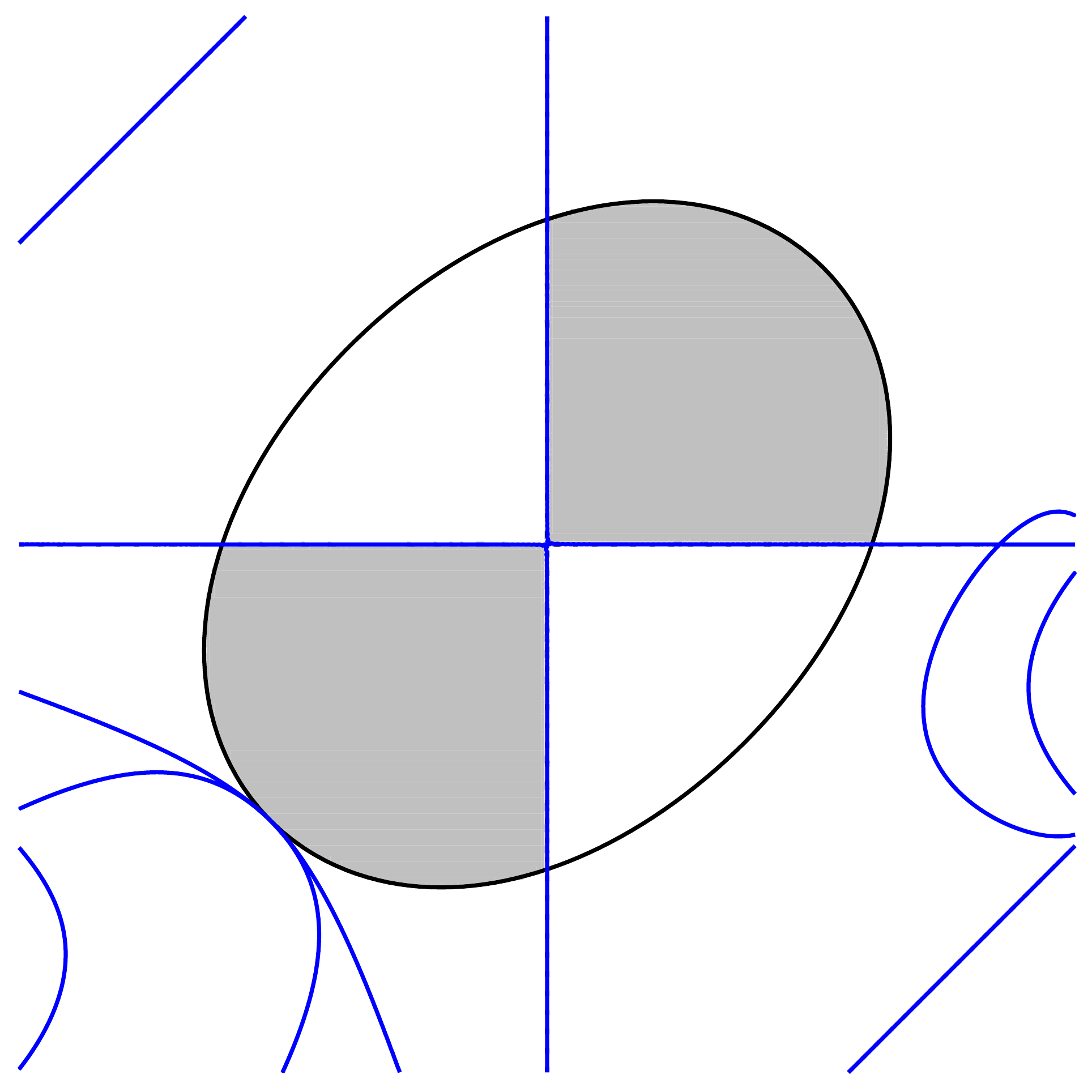}
\caption{ The figure  shows the  ridge $s_{0}^{+} \cap s_{0}^{-}$(the shaded region in the intersection of $\mathcal{I}_{0}^{+} \cap \mathcal{I}_{0}^{-}$ ) in the plane with spinal coordinates introduced in \cite{dpp}. The triple intersection $\mathcal{I}_0^{+} \cap \mathcal{I}_0^{-} \cap \mathcal{I}_0^{\star}$ is the two mutually perpendicular lines.   Compare to Fig.16 in \cite{dpp}.}
\label{fig:sector}
\end{figure}

\begin{prop}\label{prop:sides}
\begin{enumerate}
  \item The side $s_{k}^{+}$ (resp. $s_{k}^{-}$) is a topological solid cylinder in $\hc \cup \partial\hc$. The intersection of $\partial s_{k}^{+}$ (resp. $\partial s_{k}^{-}$) with $\hc$ is the disjoint union of two topological discs.
  \item The side $s_{k}^{\star}$ (resp. $s_{k}^{\diamond}$) is a topological solid light cone in $\hc \cup \partial\hc$. The intersection of $\partial s_{k}^{\star}$ (resp. $\partial s_{k}^{\diamond}$) with $\hc$ is the light cone.
\end{enumerate}
\end{prop}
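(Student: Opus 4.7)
The plan is to use the symmetries $\tau$ and $I_2$ of Proposition \ref{prop:symmetry} to reduce to analyzing $s_0^+$ for part (1) and $s_0^\star$ for part (2); the general $k$ is then recovered by conjugating with powers of $T$, and $s_k^-$, $s_k^\diamond$ are obtained from $s_{-k}^+$, $s_{-k}^\star$ via $\tau$. My starting observation is that each Cygan sphere viewed in $\hc\cup\partial\hc$ is a topological closed $3$-ball whose $S^2$-boundary is its spinal sphere on $\partial\hc$; this follows directly from the geographic-coordinate parametrization, where the $S^1$-fiber over the half-disc of $(\alpha,w)$ degenerates exactly on the spine $w=0$ and the ideal boundary corresponds to $w=\pm\sqrt{\cos\alpha}$.

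For part (1), I analyze $s_0^+$ as follows. By Corollary \ref{cor:intersections}(1), the only isometric spheres whose interiors carve out a full-dimensional piece of $\mathcal{I}_0^+$ are $\mathcal{I}_0^-$, $\mathcal{I}_{-1}^-$, $\mathcal{I}_0^\star$, and $\mathcal{I}_0^\diamond$; all others either miss $\mathcal{I}_0^+$ or meet it only in a set contained in the interior of one of these four. Each of the four meets $\mathcal{I}_0^+$ in a Giraud disk, and Proposition \ref{prop:triple} groups these four discs into two \emph{coupled pairs}: $(\mathcal{I}_0^+\cap\mathcal{I}_0^-,\ \mathcal{I}_0^+\cap\mathcal{I}_0^\star)$ are tangent at the fixed point $p_S$ of $S$ and meet there in two crossed geodesics, and $(\mathcal{I}_0^+\cap\mathcal{I}_{-1}^-,\ \mathcal{I}_0^+\cap\mathcal{I}_0^\diamond)$ are tangent at the fixed point $p_{S^{-1}T}$ of $S^{-1}T$ and meet there in two crossed geodesics. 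The two pairs are disjoint on $\mathcal{I}_0^+$. Within each pair, the union of the two corresponding interiors, restricted to $\mathcal{I}_0^+$, is a single open topological $3$-ball whose boundary in $\hc$ is the topological disc assembled from the four ridge sectors meeting at the central fixed point (two sectors from each of the two ridges described in Proposition \ref{prop:ridges} fit together around the common vertex into a pie-slice disc). Excising these two disjoint open $3$-balls from the closed $3$-ball $\mathcal{I}_0^+$ leaves a topological solid cylinder $D^2\times I$: the two caps $D^2\times\{0,1\}$ are the two four-sector discs at $p_S$ and $p_{S^{-1}T}$, and the lateral annulus $S^1\times I$ is the piece of the spinal sphere between their boundary circles, lying on $\partial\hc$.

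For part (2), I treat $s_0^\star$ similarly. It is bounded in $\hc$ only by the two ridges $s_0^+\cap s_0^\star$ and $s_0^-\cap s_0^\star$, both of which meet at $p_S$ by Proposition \ref{prop:triple}; by Propositions \ref{prop:s0star}--\ref{prop:s0star1}, the intersections of $\mathcal{I}_0^\star$ with the more distant spheres $\mathcal{I}_{-1}^-$ and $\mathcal{I}_0^\diamond$ are either empty or lie in the interior of $\mathcal{I}_0^+$, and so contribute nothing to $\partial s_0^\star\cap\hc$. The two Giraud disks $\mathcal{I}_0^\star\cap\mathcal{I}_0^+$ and $\mathcal{I}_0^\star\cap\mathcal{I}_0^-$ are tangent at $p_S$ and meet in two crossed geodesics, and they divide a neighborhood of $p_S$ in $\mathcal{I}_0^\star$ into four wedges, three of which lie in the interiors of $\mathcal{I}_0^+$ or $\mathcal{I}_0^-$ and one lying outside both. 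Globalizing, the union of the two interiors restricted to $\mathcal{I}_0^\star$ is a single open topological $3$-ball, and its complement in the closed $3$-ball $\mathcal{I}_0^\star$ is therefore a topological solid cone: the apex is $p_S\in\hc$, the base is a disc on $\partial\hc$, and the lateral surface in $\hc$ is the four-sector configuration at $p_S$, which is exactly the ``light cone'' of the statement.

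The main obstacle in both parts is the claim that the union of the two neighboring interiors, intersected with the current Cygan sphere, forms a single connected open topological $3$-ball. This reduces to a local check at each central fixed point ($p_S$ or $p_{S^{-1}T}$), where one must use the concrete Lagrangian-plane and complex-line identifications of the two crossed geodesics carried out in the proof of Proposition \ref{prop:triple} to see that the two tangent Giraud disks really separate a neighborhood of the central point into exactly four wedges with the right distribution of interior versus exterior; one then has to propagate this local picture outward to the spinal sphere using the explicit parametrizations from Propositions \ref{prop:s0plus}--\ref{prop:s0star1}. The combinatorial distinction between having two such excision sites (on $\mathcal{I}_0^+$, giving a cylinder) versus one (on $\mathcal{I}_0^\star$, giving a cone) is what ultimately yields the two different topological types stated.
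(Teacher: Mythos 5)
Your overall strategy coincides with the paper's: reduce to $k=0$ via powers of $T$ and the symmetries $\tau$, $I_2$, view each isometric sphere as a closed $3$-ball bounded by its spinal sphere, and recognize $\partial s_0^{+}\cap\hc$ (resp.\ $\partial s_0^{\star}\cap\hc$) as unions of four ridge sectors glued along the crosses of Proposition \ref{prop:triple}. However, there is a genuine gap at exactly the point where the two cases diverge. Locally at the fixed point of $S$ (or of $S^{-1}T$) the two configurations are identical: four sectors patched together along two crossed geodesics. Whether such a configuration is a disc or a light cone is a \emph{global} question, decided by whether the ideal boundaries of the four sectors close up into one simple closed curve or into two disjoint ones on the spinal sphere. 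The paper settles this by computation: one circle for $\triangle_1=\bigl(s_0^{+}\cap s_0^{-}\bigr)\cup\bigl(s_0^{+}\cap s_0^{\star}\bigr)$, hence a disc; two disjoint circles for $\triangle_3=\bigl(s_0^{+}\cap s_0^{\star}\bigr)\cup\bigl(s_0^{-}\cap s_0^{\star}\bigr)$, hence a light cone. Your proposal never performs, or even isolates, this computation: you assert ``pie-slice disc'' in case (1) and ``light cone'' in case (2), and you attribute the cylinder-versus-cone dichotomy to having two excision sites rather than one. That is not the correct mechanism: a single excised region whose frontier in $\hc$ were a properly embedded disc would leave a $3$-ball, not a solid light cone, so the number of sites alone cannot distinguish the two outcomes.

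Relatedly, your intermediate claim that ``the union of the two neighboring interiors, intersected with the current Cygan sphere, forms a single connected open topological $3$-ball'' is false in case (2) and inconsistent with your own conclusion there: if the frontier of the excised region in $\hc$ is the light cone (two discs joined at the apex, with two disjoint boundary circles on the spinal sphere), then the excised region is a solid torus, and only then is its complement the union of two solid cones meeting at the apex --- a solid light cone whose base consists of \emph{two} discs on $\partial\hc$, consistent with Proposition \ref{prop:side-star}, not the single base disc you describe. Finally, you should address the degenerate case $\theta=\pi/3$, where $\triangle_1$ and $\triangle_2$ are no longer disjoint but meet at two parabolic fixed points on $\partial\hc$; the paper records this explicitly, and the resulting pinching of the cylinder is used later in Section \ref{sec:manifold}.
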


\begin{proof}
(1) The side $s_{k}^{+}$ is contained in the isometric sphere $\mathcal{I}_k^{+}$. By Corollary \ref{cor:intersections}, $s_{k}^{+}$ intersects possibly with the sides contained in the isometric spheres $\mathcal{I}_k^{-}$, $\mathcal{I}_{k-1}^{-}$, $\mathcal{I}_k^{\star}$, $\mathcal{I}_{k-1}^{\star}$, $\mathcal{I}_k^{\diamond}$ and $\mathcal{I}_{k+1}^{\diamond}$.

Let $\triangle_1$ be the union of the ridges $s_{k}^{+} \cap s_{k}^{-}$ and $s_{k}^{+} \cap s_{k}^{\star}$, and $\triangle_2$ be the union of the ridges $s_{k}^{+} \cap s_{k-1}^{-}$ and $s_{k}^{+} \cap s_{k}^{\diamond}$. By Proposition \ref{prop:triple}, $\triangle_1$ contains the cross $\mathcal{I}_{k}^{+}\cap\mathcal{I}_{k}^{-}\cap\mathcal{I}_{k}^{\star}$.
By Proposition \ref{prop:ridges}, $\triangle_1$ is a union of four sectors which are patched together along the cross. Hence, $\triangle_1$ is topologically either a disc or a light cone. By a straight computation, the ideal boundary of $\triangle_1$ on $\hc$ is a simple closed curve on the ideal boundary of $\mathcal{I}_{k}^{+}$. See Figure \ref{fig:triple}. Thus $\triangle_1$ is a topological disc. By a similar argument, $\triangle_2$ is a topological disc.

Since $\mathcal{I}_k^{+} \cap \mathcal{I}_k^{\star} \cap \mathcal{I}_{k-1}^{-}=\emptyset$ except when $\theta=\pi/3$ in which case it is a point on $\partial\hc$, $\triangle_1$ and $\triangle_2$ are disjoint except when $\theta=\pi/3$ in which case they intersect at two points on $\partial\hc$. See Figure \ref{figure:cylinder} and Figure \ref{figure:fd}. Note that isometric spheres are topological balls and their pairwise intersections are connected. So, $s_{k}^{+}$ is a topological solid cylinder. See Figure \ref{fig:sideplus}.
$s_{k}^{-}$ can be described by a similar argument.

(2) The side $s_{k}^{\star}$ is contained in the isometric sphere $\mathcal{I}_k^{\star}$. According to Corollary \ref{cor:intersections}, $s_k^{\star}$ only intersects with $s_{k}^{+}$ and $s_{k}^{-}$. Let $\triangle_3$ be the union of $s_{k}^{+} \cap s_{k}^{\star}$ and $s_{k}^{-} \cap s_{k}^{\star}$. By Proposition \ref{prop:triple} and Proposition \ref{prop:ridges}, $\triangle_3$ is a union of four sectors which are patched together along the cross $\mathcal{I}_k^{+} \cap \mathcal{I}_k^{-} \cap \mathcal{I}_k^{\star}$. By a computation for the case $k=0$, one can see that the ideal boundary of $\triangle_3$ is a union of two disjoint simple closed curves in the ideal boundary of $\mathcal{I}_k^{\star}$. See Figure \ref{fig:triple}. Thus $\triangle_3$ is a light cone. Hence, $s_{k}^{\star}$ is topologically a solid light cone. See Figure \ref{fig:sidestar}.
$s_{k}^{\diamond}$ can be described by a similar argument.
\end{proof}

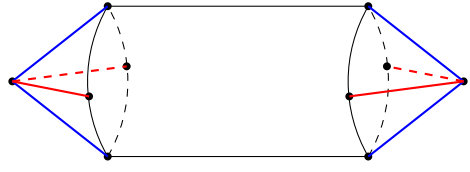
\begin{figure}
\begin{center}
\begin{tikzpicture}
\fill (30:2) circle (1.5pt) (30*5:2) circle (1.5pt) (30*7:2) circle (1.5pt) (30*11:2) circle (1.5pt);
\fill (3,0) circle (1.5pt) (-3,0) circle (1.5pt);
\fill (1.98,0.2) circle (1.5pt) (-1.98,-0.2) circle (1.5pt);
\fill (1.48,-0.2) circle (1.5pt) (-1.48,0.2) circle (1.5pt);
\draw (30:2) -- (30*5:2);
\draw (30*7:2) -- (30*11:2);
\draw (30*5:2) arc (30*5:30*7:2);
\draw[dashed] (30*11:2) arc (30*11:30*13:2);
\draw (30*1:2) arc (30*5:30*7:2);
\draw[dashed] (30*7:2) arc (30*11:30*13:2);
\draw[thick,blue] (3,0) -- (30:2);
\draw[thick,blue] (3,0) -- (30*11:2);
\draw[thick,blue] (-3,0) -- (30*5:2);
\draw[thick,blue] (-3,0) -- (30*7:2);
\draw[thick,red] (3,0) -- (1.48,-0.2);
\draw[thick,dashed,red] (3,0) -- (1.98,0.2);
\draw[thick,red] (-3,0) -- (-1.98,-0.2);
\draw[thick,dashed,red] (-3,0) -- (-1.48,0.2);
\end{tikzpicture}
\end{center}
\caption{A schematic view of the side $s_k^{+}$ ($s_k^{-}$).}
\label{fig:sideplus}
\end{figure}

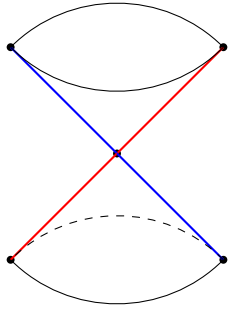
\begin{figure}
\begin{center}
\begin{tikzpicture}
\fill (45:2) circle (1.5pt) (45*3:2) circle (1.5pt) (45*5:2) circle (1.5pt) (45*7:2) circle (1.5pt);
\fill (0,0) circle (1.5pt);
\draw[thick,red] (0,0) -- (45:2);
\draw[thick,blue] (0,0) -- (45*3:2);
\draw[thick,red] (0,0) -- (45*5:2);
\draw[thick,blue] (0,0) -- (45*7:2);
\draw (45:2) arc (45*1:45*3:2);
\draw (45*3:2) arc (45*5:45*7:2);
\draw (45*5:2) arc (45*5:45*7:2);
\draw[dashed] (45*7:2) arc (45*1:45*3:2);
\end{tikzpicture}
\end{center}
\caption{A schematic view of the side $s_k^{\star}$ ($s_k^{\diamond}$).}
\label{fig:sidestar}
\end{figure}

By applying a Poincar\'{e} polyhedron theorem in $\hc$ as stated for example in \cite{par-will2}, \cite{dpp} or \cite{mostow} (see \cite{b} for a version in the hyperbolic plane), we have our main result as follows.
\begin{thm}
Suppose that $\theta\in [0, \pi/3]$. Let $D$ be as in Definition \ref{domain:D}. Then $D$ is a fundamental domain for the cosets of $\langle T \rangle$ in $\Gamma$.
Moreover, $\Gamma$ is discrete and has the presentation
$$
\Gamma=\langle S, T | S^4=(T^{-1}S)^4=id \rangle.
$$
\end{thm}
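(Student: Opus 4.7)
The plan is to apply a Poincar\'{e} polyhedron theorem for $\hc$, as formulated in \cite{par-will2, dpp, mostow}. I first set up the side pairings. By Lemma \ref{lem:goldman}, for each $k\in\mathbb{Z}$ the element $T^kST^{-k}$ maps $\mathcal{I}_k^+$ to $\mathcal{I}_k^-$ and hence pairs the side $s_k^+$ to $s_k^-$. Since $S^2$ and $(S^{-1}T)^2$ both have order two, their isometric spheres coincide with those of their inverses, so $T^kS^2T^{-k}$ and $T^k(S^{-1}T)^2T^{-k}$ act as order-two self-pairings (foldings) of $s_k^\star$ and $s_k^\diamond$ respectively. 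No further generators are required as pairings, since $T$ fixes $q_\infty$ and produces no isometric sphere; instead $T$ will enter the presentation through the identification of cosets.

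Next I verify the cycle condition at each ridge. By Proposition \ref{prop:ridges} and Corollary \ref{cor:intersections}, the ridges of $D$ fall into six $\langle T\rangle$-orbits, represented by $s_0^+\cap s_0^-$, $s_0^+\cap s_0^\star$, $s_0^+\cap s_{-1}^-$, $s_0^+\cap s_0^\diamond$, $s_0^-\cap s_0^\star$ and $s_{-1}^-\cap s_0^\diamond$. The ridge $s_0^+\cap s_0^\star$ contains the interior fixed point of $S$, and by Proposition \ref{prop:triple} the four local sectors around that point are cyclically permuted by $S$; the Poincar\'{e} cycle at this ridge closes after four pairings and produces the relation $S^4=\mathrm{id}$. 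Analogously, the ridge $s_0^+\cap s_0^\diamond$ meets the fixed point of $S^{-1}T$ and yields $(S^{-1}T)^4=\mathrm{id}$, equivalently $(T^{-1}S)^4=\mathrm{id}$. The remaining four ridges yield trivial cycles: each returns to itself after at most two pairings with composition equal to the identity, as can be confirmed using the antiholomorphic symmetry $\tau$ and the involution $I_2$ of Proposition \ref{prop:symmetry}.

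For completeness I check the horospherical condition at each parabolic fixed point on $\partial D$. At $q_\infty$ the stabilizer in $\Gamma$ is $\langle T\rangle$, and $D$ is $\langle T\rangle$-invariant by construction; a sufficiently high horoball based at $q_\infty$ intersected with $D$ projects to a compact horospherical cross-section in $D/\langle T\rangle$. For $\theta\in[0,\pi/3)$ this is the only boundary parabolic of $D$, and the consistency check is immediate. When $\theta=\pi/3$, Corollary \ref{cor:intersections} produces additional tangencies of isometric spheres at the parabolic fixed points of $T^{-1}S^2$, $S^2T^{-1}$, $ST^{-1}S$ and $T^{-1}ST^{-1}ST$; at each such point I exhibit a Cygan horoball whose boundary meets only the finitely many tangent sides, arranged symmetrically under the parabolic stabilizer, to conclude that the corresponding cusp cross-section is complete.

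The main obstacle will be the cycle verification at the two ridges $s_0^+\cap s_0^\star$ and $s_0^+\cap s_0^\diamond$ where order-four elliptic fixed points live: I must confirm that the composition of pairings around the ridge closes after exactly four steps, with the composed word giving the stated relation and no shorter or additional one. Once this combinatorial step and the horospherical checks are in place, the Poincar\'{e} polyhedron theorem delivers simultaneously that $D$ tessellates $\hc$ under $\Gamma$ modulo $\langle T\rangle$, hence $D$ is a fundamental domain for the cosets of $\langle T\rangle$ in $\Gamma$, that $\Gamma$ is discrete, and that $\Gamma$ admits the presentation $\langle S, T\mid S^4=(T^{-1}S)^4=\mathrm{id}\rangle$.
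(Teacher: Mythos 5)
Your overall strategy (Poincar\'{e} polyhedron theorem, with $T^kST^{-k}$ pairing $s_k^+$ to $s_k^-$ and $T^kS^2T^{-k}$, $T^k(S^{-1}T)^2T^{-k}$ as order-two self-pairings of $s_k^\star$, $s_k^\diamond$) is exactly the paper's, and the two relations you extract are the right ones. But your description of the ridge cycles is combinatorially wrong, and the verification you propose would fail if carried out. The six $\langle T\rangle$-orbits of ridges do not split into two ``special'' ridges carrying the relations plus four ridges with trivial cycles. They organize into exactly two cycles of length three each: $s_0^+\cap s_0^-\xrightarrow{S} s_0^\star\cap s_0^-\xrightarrow{S^2} s_0^+\cap s_0^\star\xrightarrow{S} s_0^+\cap s_0^-$, with cycle transformation $S\cdot S^2\cdot S=S^4$, and the analogous length-three cycle through $s_0^+\cap s_{-1}^-$, $s_0^\diamond\cap s_{-1}^-$, $s_0^+\cap s_0^\diamond$ with cycle transformation $(T^{-1}S)^4$. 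In particular the ``remaining four ridges'' are precisely the other members of these two cycles; none of them ``returns to itself after at most two pairings with composition the identity.'' For instance $S$ sends $s_0^+\cap s_0^-$ to the different ridge $s_0^-\cap s_0^\star$ (one must check, as the paper does by tracking the point $q(\pi/3,\pi/6+\theta,\sqrt{2}/2)$, that $S$ sends the correct opposite pair of sectors to the correct opposite pair). Your count of ``four pairings'' for the $S^4$ cycle is likewise off: the cycle closes after three pairings, one of which is the order-two folding $S^2$. If the four remaining ridges really had trivial length-$\le 2$ cycles, the local tessellation would put only two copies of $D$ around them, contradicting the three copies that the geometry forces; so this is not a cosmetic slip but a statement that is false as written.

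Two smaller points. The fixed point of $S$ is not interior to the single ridge $s_0^+\cap s_0^\star$; it is the crossing point of the two geodesics $\mathcal{I}_0^+\cap\mathcal{I}_0^-\cap\mathcal{I}_0^\star$ and lies on the boundary of all three ridges of that cycle, which is why the cyclic permutation of the four rays by $S$ governs the whole cycle rather than one ridge. For the consistency condition at the accidental parabolics when $\theta=\pi/3$, the paper does not build explicit Cygan horoballs; it traces the cycle of the ideal vertices $T^k(p_2)$, $T^k(q_2)$ under the side-pairings and checks that the resulting vertex cycle transformations ($S\cdot S\cdot S^{-2}=id$ and $T^{-1}ST\cdot(S^{-1}T)^{-2}\cdot S=(T^{-1}S^2)^2$) are non-loxodromic. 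Your horoball construction could be made to work but is the harder route, and as stated it is only a promise rather than an argument.
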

\begin{proof}
The sides of $D$ are $s_k^{+}$  $s_k^{-}$, $s_k^{\star}$ and $s_k^{\diamond}$.
The ridges of $D$ are $s_{k}^{+} \cap s_{k}^{-}$, $s_{k}^{+} \cap s_{k}^{\star}$, $s_{k}^{+} \cap s_{k-1}^{-}$, $s_{k}^{+} \cap s_{k}^{\diamond}$, $s_{k}^{-} \cap s_{k}^{\star}$ and  $s_{k-1}^{-} \cap s_{k}^{\diamond}$.
To obtain the side-pairing maps and ridge cycles, by applying powers of $T$, it suffices to consider the case where $k=0$.

{\textbf{The side-pairing maps:}}
$s_0^{+}$ is contained in the isometric sphere $\mathcal{I}(S)$ and $s_0^{-}$ in the isometric sphere $\mathcal{I}(S^{-1})$. The ridge $s_0^{+} \cap s_0^{-}$ is contained in the disc $\mathcal{I}(S) \cap \mathcal{I}(S^{-1})$, which is defined by the triple equality
$$
|\langle {\bf z}, {\bf{q}_{\infty}} \rangle|=|\langle {\bf z}, S^{-1}({\bf{q}_{\infty}}) \rangle|=|\langle {\bf z}, S({\bf{q}_{\infty}}) \rangle|.
$$
And the ridge $s_0^{-} \cap s_0^{\star}$ is contained in the disc $\mathcal{I}(S^{-1}) \cap \mathcal{I}(S^2)$, which is defined by the triple equality
$$
|\langle {\bf z}, {\bf{q}_{\infty}} \rangle|=|\langle {\bf z}, S({\bf{q}_{\infty}}) \rangle|=|\langle {\bf z}, S^{-2}({\bf{q}_{\infty}}) \rangle|.
$$

Since $S$ maps $q_{\infty}$ to $S(q_{\infty})$, $S^{-1}(q_{\infty})$ to $q_{\infty}$ and $S(q_{\infty})$ to $S^2(q_{\infty})=S^{-2}(q_{\infty})$,
$S$ maps the disc $\mathcal{I}(S) \cap \mathcal{I}(S^{-1})$ to the disc $\mathcal{I}(S^{-1}) \cap \mathcal{I}(S^2)$.
Note that $\mathcal{I}(S) \cap \mathcal{I}(S^{-1}) \cap \mathcal{I}(S^2)$ is the union of two crossed geodesics (see Proposition \ref{prop:triple}) whose four rays are cyclical permutated by $S$. Since the ridge $s_0^{+} \cap s_0^{-}$  lies in the closure of the exterior of the isometric sphere $\mathcal{I}(S^2)$, according to the equation (\ref{eq:ridge}), the point $q(\pi/3, \pi/6+\theta, \sqrt{2}/2)$ is contained in $s_0^{+} \cap s_0^{-}$. One can easily verify that $S(q(\pi/3, \pi/6+\theta, \sqrt{2}/2))$ lies in the exterior of $\mathcal{I}(S)$. Thus $S(q(\pi/3, \pi/6+\theta, \sqrt{2}/2))$ is contained in $s_0^{-} \cap s_0^{\star}$, which lie in the closure of the exterior of the isometric sphere $\mathcal{I}(S)$.

Hence $S$ maps the ridge $s_0^{+} \cap s_0^{-}$ to the ridge $s_0^{-} \cap s_0^{\star}$. Similarly, $S$ maps $s_0^{+} \cap s_0^{\star}$ to $s_0^{+} \cap s_0^{-}$. See Figure \ref{fig:ridgecycle}.
Since $S$ maps $\mathcal{I}_0^{+} \cap \mathcal{I}_{-1}^{-} \cap \mathcal{I}_0^{\diamond}$ to $\mathcal{I}_0^{-} \cap \mathcal{I}_{1}^{+} \cap \mathcal{I}_1^{\diamond}$, a similar argument shows that $S$ maps $s_0^{+} \cap s_{-1}^{-}$ to $s_0^{-} \cap s_{1}^{\diamond}$ and $s_0^{+} \cap s_0^{\diamond}$ to $s_0^{-} \cap s_{1}^{+}$.

\begin{figure}
\begin{center}
\begin{tikzpicture}
\fill (45:2) circle (1.5pt) (45*3:2) circle (1.5pt) (45*5:2) circle (1.5pt) (45*7:2) circle (1.5pt);
\fill (0,0) circle (1.5pt);
\draw[thick,red] (0,0) -- (45:2);
\draw[thick,blue] (0,0) -- (45*3:2);
\draw[thick,red] (0,0) -- (45*5:2);
\draw[thick,blue] (0,0) -- (45*7:2);
\draw (45:2) arc (45*1:45*3:2);
\draw (45*3:2) arc (45*5:45*7:2);
\draw (45*5:2) arc (45*5:45*7:2);
\draw[dashed] (45*7:2) arc (45*1:45*3:2);
\draw (45:2) -- (45*7:2);
\draw (45*3:2) -- (45*5:2);
\coordinate [label=right:$s_0^{+}\cap s_0^{-}$] (r1) at (-1.4,0);
\coordinate [label=right:$s_0^{+}\cap s_0^{-}$] (r2) at (0.2,0);
\coordinate [label=right:$s_0^{+}\cap s_0^{\star}$] (r3) at (-4,0);
\coordinate [label=right:$s_0^{-}\cap s_0^{\star}$] (r4) at (3,0);
\draw[->] (0,0.7) -- (-2.5,0);
\draw[->] (0,-1.6) -- (-2.5,-0.2);
\draw[->,dashed] (0,1.6) -- (3,0.2);
\draw[->,dashed] (0,-0.5) -- (3,-0.2);
\end{tikzpicture}
\end{center}
\caption{A schematic view of the ridges $s_0^{+}\cap s_0^{-}$, $s_0^{+}\cap s_0^{\star}$ and $s_0^{-}\cap s_0^{\star}$. The thick cross is the triple intersection $\mathcal{I}_{0}^{+}\cap\mathcal{I}_{0}^{-}\cap\mathcal{I}_{0}^{\star}$, that is the union $\mathcal{L}_1\cup \mathcal{C}_1 \cup \mathcal{C}_2$ described in Proposition \ref{prop:triple}.}
\label{fig:ridgecycle}
\end{figure}
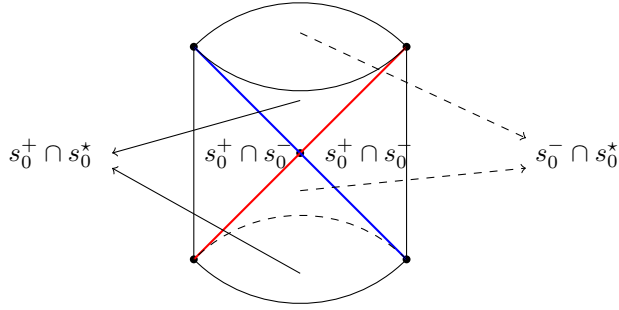

By a similar argument, $s_0^{\star}$ (respectively, $s_0^{\diamond}$) is mapped to itself by the elliptic element of order two $S^2$ (respectively, $(T^{-1}S)^2=(S^{-1}T)^2$), which sends $s_0^{+} \cap s_0^{\star}$ to $s_0^{-} \cap s_0^{\star}$ (respectively, $s_0^{\diamond} \cap s_{-1}^{-}$ to $s_0^{+} \cap s_0^{\diamond}$) and vice-versa.

Hence, the side-pairing maps are:
\begin{eqnarray*}
  T^{k}ST^{-k}: & s_k^{+}\longrightarrow s_k^{-} \\
  T^{k}S^2T^{-k}: & s_k^{\star}\longrightarrow s_k^{\star} \\
  T^{k}(T^{-1}S)^2T^{-k}: & s_k^{\diamond}\longrightarrow s_k^{\diamond}.
\end{eqnarray*}

\textbf{The cycle transformations:}
According to the side-pairing maps, the ridge cycles are:
\begin{eqnarray*}
&(s_k^{+} \cap s_k^{-}, s_k^{+} , s_k^{-}) \xrightarrow{T^{k}ST^{-k}}
(s_k^{\star} \cap s_k^{-}, s_k^{\star} , s_k^{-})  \xrightarrow{T^{k}S^2T^{-k}} \\
&(s_k^{+} \cap s_k^{\star}, s_k^{+} , s_k^{\star})  \xrightarrow{T^{k}ST^{-k}}
(s_k^{+} \cap s_k^{-}, s_k^{+} , s_k^{-}),
\end{eqnarray*}
and
\begin{eqnarray*}
  &(s_k^{+} \cap s_{k-1}^{-}, s_k^{+} , s_{k-1}^{-})  \xrightarrow{T^{k}(T^{-1}S)T^{-k}}
(s_k^{\diamond} \cap s_{k-1}^{-}, s_k^{\diamond} , s_{k-1}^{-})  \xrightarrow{T^{k}(T^{-1}S)^{2}T^{-k}} \\
  &(s_k^{+} \cap s_{k}^{\diamond}, s_k^{+} , s_{k}^{\diamond})  \xrightarrow{T^{k}(T^{-1}S)T^{-k}}
(s_k^{+} \cap s_{k-1}^{-}, s_k^{+} , s_{k-1}^{-}).
\end{eqnarray*}

Thus the cycle transformations are
$$
T^{k}ST^{-k}\cdot T^{k}S^2T^{-k}\cdot T^{k}ST^{-k} =T^{k}S^{4}T^{-k},
$$
and
$$
T^{k}(T^{-1}S)T^{-k}\cdot T^{k}(T^{-1}S)^{2}T^{-k}\cdot T^{k}(T^{-1}S)T^{-k} =T^{k}(T^{-1}S)^4T^{-k},
$$
which are equal to the identity map, since we have $S^4=id$ and $(T^{-1}S)^4=id$.

\textbf{The local tessellation}:
There are exactly two copies of $D$ along each side, since the sides are contained in isometrical spheres and the side-pairing maps send the exteriors to the interiors.
Thus there is nothing to verify for the points in the interior of every side.

According to the ridge cycles and cycle transformations, there are exactly three copies of $D$ along each ridge.
\begin{itemize}
  \item $s_k^{+} \cap s_k^{-}$, $s_k^{\star} \cap s_k^{-}$ and $s_k^{+} \cap s_k^{\star}$: These three ridges are in one cycle. Thus we only need to consider the ridge $s_k^{+} \cap s_k^{-}$. Since the cycle transformation of $s_k^{+} \cap s_k^{-}$ is
      $$T^{k}ST^{-k}\cdot T^{k}S^2T^{-k}\cdot T^{k}ST^{-k} =id,$$
      the three copies of $D$ along $s_k^{+} \cap s_k^{-}$ are $D$, $T^{k}S^{-1}T^{-k}(D)$ and $T^{k}ST^{-k}(D)$.
      We know that $s_k^{+} \cap s_k^{-}$ is contained in $\mathcal{I}(T^{k}ST^{-k}) \cap \mathcal{I}(T^{k}S^{-1}T^{-k})$, which is defined by the triple equality
$$
|\langle {\bf z}, {\bf{q}_{\infty}} \rangle|=|\langle {\bf z}, T^kS^{-1}T^{-k}({\bf{q}_{\infty}}) \rangle|=|\langle {\bf z}, T^kST^{-k}({\bf{q}_{\infty}}) \rangle|.
$$

For $z$ in the neighborhoods of $s_k^{+} \cap s_k^{-}$ in $D$, $|\langle {\bf z}, {\bf{q}_{\infty}} \rangle|$ is the smallest of the three quantities in the above triple equality.

For $z$ in the neighborhoods of $s_k^{\star} \cap s_k^{-}$ in $D$,  $|\langle {\bf z}, {\bf{q}_{\infty}} \rangle|$ is no more than $|\langle {\bf z},  T^kST^{-k}({\bf{q}_{\infty}}) \rangle|$ and $|\langle {\bf z}, T^kS^{-2}T^{-k}({\bf{q}_{\infty}}) \rangle|$. Applying $T^{k}S^{-1}T^{-k}$ gives neighborhood of $s_k^{+} \cap s_k^{-}$ in $T^{k}S^{-1}T^{-k}(D)$
where $|\langle {\bf z}, T^kS^{-1}T^{-k}({\bf{q}_{\infty}}) \rangle|$ is the smallest of the three quantities in the above triple equality.

For $z$ in the neighborhoods of $s_k^{+} \cap s_k^{\star}$ in $D$, $|\langle {\bf z}, {\bf{q}_{\infty}} \rangle|$ is no more than $|\langle {\bf z},  T^kS^{-1}T^{-k}({\bf{q}_{\infty}}) \rangle|$ and $|\langle {\bf z}, T^kS^{-2}T^{-k}({\bf{q}_{\infty}}) \rangle|$. Applying $T^{k}ST^{-k}$ gives neighborhood of $s_k^{+} \cap s_k^{-}$ in $T^{k}ST^{-k}(D)$
where $|\langle {\bf z}, T^kST^{-k}({\bf{q}_{\infty}}) \rangle|$ is the smallest of the three quantities in the above triple equality.

      Thus the union of $D$, $T^{k}S^{-1}T^{-k}(D)$ and $T^{k}ST^{-k}(D)$ forms a regular neighborhood of each point in $s_k^{+} \cap s_k^{-}$.
  \item $s_k^{+} \cap s_{k-1}^{-}$, $s_k^{\diamond} \cap s_{k-1}^{-}$ and $s_k^{+} \cap s_{k}^{\diamond}$: We only need to consider the ridge $s_k^{+} \cap s_{k-1}^{-}$. Since the cycle transformation of $s_k^{+} \cap s_{k-1}^{-}$ is
      $$T^{k}(T^{-1}S)T^{-k}\cdot T^{k}(T^{-1}S)^{2}T^{-k}\cdot T^{k}(T^{-1}S)T^{-k}=id,$$
      the union of $D$, $T^{k}(T^{-1}S)^{-1}T^{-k}(D)$ and $T^{k}(T^{-1}S)T^{-k}(D)$ forms a regular neighborhood of each point in $s_k^{+} \cap s_{k-1}^{-}$, by a similar argument as in the first item.
\end{itemize}

\textbf{Consistent system of horoballs}:
When $\theta=\frac{\pi}{3}$, there are accidental ideal vertices on $D$. The sides $s_k^{\star}$ and $s_{k+1}^{\star}$ will be asymptotic on $\partial \hc$ at the fixed point of the parabolic element $T^k(S^2T^{-1})T^{-k}$, and the sides $s_k^{\diamond}$ and $s_{k+1}^{\diamond}$ will be asymptotic on $\partial \hc$ at the fixed point of the parabolic element $T^k(ST^{-1}S)T^{-k}$. To show that there will be consistent system of horoballs, it suffices to show that all the cycle transformations fixing a given cusp is non-loxodromic.

Let $p_2$ be the fixed point of $T^{-1}S^2$ and $q_2$ be the fixed point of $(T^{-1}S)^2T$ (the coordinates of $p_2$ and $q_2$ are given in Definition \ref{def:points}, or see Figure \ref{figure:cylinder}). Then all the accidental ideal vertices $\{T^k(p_2)\}$ and $\{T^{k}(q_2)\}$ are related by the side-pairing maps as the following:
$$
\xymatrix{
\ar[r] & T^{-1}(q_2) \ar[d] \ar[r] & q_2 \ar[d]|-{T^{-1}ST} \ar[r]^{(S^{-1}T)^2}
                & T(q_2) \ar[d]^{S}  \ar[r] & T^2(q_2) \ar[d] \ar[r] &  \\
\ar[r] & T^{-1}(p_2)\ar[ur] \ar[r] & p_2 \ar[ur]_{S} \ar[r]_{S^2}
                & T(p_2) \ar[ur] \ar[r] & T^{2}(p_2)  \ar[r]    &     }
$$
Thus, up to powers of $T$, all the cycle transformations are $S\cdot S\cdot S^{-2} =id$ and
$$
T^{-1}ST\cdot (S^{-1}T)^{-2}\cdot S= (T^{-1}S^2)^2 = (I_1I_3I_2I_3)^2,
$$
which is parabolic.
This means that $p_2$ is fixed by the parabolic element $(T^{-1}S^2)^2$.

Therefore, $D$ is a fundamental domain for the cosets of $\langle T \rangle$ in $\Gamma$. The side-pairing maps and $T$ will generate the group $\Gamma$. The reflection relations are $(T^k S^2 T^{-k})^2=id$ and $(T^k(T^{-1}S)^2T^{-k})^2=id$. The cycle relations are $T^kS^4T^{-k}=id$ and $T^k(T^{-1}S)^4T^{-k}=id$. Thus $\Gamma$ is discrete and has the presentation
$$
\Gamma=\langle S, T | S^4=(T^{-1}S)^4=id \rangle.
$$
\end{proof}

Since $\Gamma$ is a subgroup of $\langle I_1, I_2, I_3 \rangle$ of index 2, as a corollary, we have
\begin{cor}
Let $\langle I_1, I_2, I_3 \rangle$ be a complex hyperbolic $(4,4,\infty)$ triangle group as in Proposition \ref{prop:trianle}.
Then $\langle I_1, I_2, I_3 \rangle$ is discrete and faithful if and only if $I_1I_3I_2I_3$ is non-elliptic.
\end{cor}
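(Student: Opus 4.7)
My plan is to deduce the corollary by combining the preceding theorem with Proposition~\ref{prop:trianle} and a presentation-matching argument. Write $\rho\colon\Delta_{4,4,\infty}\to\pu$ for the representation $\sigma_i\mapsto I_i$, and let $\Delta^{+}\leq\Delta_{4,4,\infty}$ denote the even subgroup, generated by $s=\sigma_2\sigma_3$ and $t=\sigma_2\sigma_1$. A routine Reidemeister--Schreier computation, using the identity $t^{-1}s=\sigma_1\sigma_3$, yields the abstract presentation $\Delta^{+}=\langle s,t\mid s^{4}=(t^{-1}s)^{4}=1\rangle$, which coincides with the presentation of $\Gamma=\rho(\Delta^{+})=\langle S,T\rangle$ supplied by the theorem above.

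For the sufficient direction, I assume $I_1I_3I_2I_3$ is non-elliptic. Proposition~\ref{prop:trianle} then forces $\theta\in[0,\pi/3]$, and the theorem yields discreteness of $\Gamma$. Since $[\langle I_1,I_2,I_3\rangle:\Gamma]\leq 2$, the full group is discrete as well. The match of presentations exhibits $\rho|_{\Delta^{+}}$ as an isomorphism onto $\Gamma$, so $\ker\rho$ meets $\Delta^{+}$ trivially. Any would-be non-trivial element of $\ker\rho$ lying in $\Delta_{4,4,\infty}\setminus\Delta^{+}$ has the form $\sigma_i w$ with $w\in\Delta^{+}$, forcing $I_i\in\Gamma$; this is ruled out by a direct Ford-domain check, since every defining relation of $\langle I_1,I_2,I_3\rangle$ has even length in $I_1,I_2,I_3$ and hence word-parity is an invariant of the image. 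Therefore $\rho$ is faithful.

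For the necessary direction, assume $I_1I_3I_2I_3$ is elliptic, so $\theta\in(\pi/3,\pi/2)$. The abstract element $\sigma_1\sigma_3\sigma_2\sigma_3=t^{-1}s^{2}\in\Delta^{+}$ has infinite order in $\Delta_{4,4,\infty}$—either by standard Coxeter group theory, or \emph{a posteriori} from the sufficient direction at $\theta=\pi/3$, where its image $T^{-1}S^{2}$ is parabolic in a representation that is already known to be faithful on the even subgroup. If $I_1I_3I_2I_3$ has finite order, then $\rho$ has non-trivial kernel and fails to be faithful. If instead it is elliptic of infinite order, then its iterates accumulate on a compact torus of rotations in $\pu$, so the cyclic subgroup $\langle I_1I_3I_2I_3\rangle$, and hence $\langle I_1,I_2,I_3\rangle$, cannot be discrete. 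Either way the conjunction of discreteness and faithfulness fails.

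The principal obstacle is the faithfulness step in the sufficient direction: one must recognise the Reidemeister--Schreier presentation of $\Delta^{+}$, align it with the presentation of $\Gamma$ produced by the Poincar\'{e} polyhedron theorem, and then confirm that no $I_i$ slips into $\Gamma$ to furnish a kernel element on the odd coset. The remaining bookkeeping—discreteness transfer via finite index, and the dichotomy on elliptic isometries in the necessary direction—is comparatively routine.
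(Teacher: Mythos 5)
Your overall route is the one the paper itself takes (the corollary is stated there as an immediate consequence of the Ford-domain theorem via the index-two relationship), but you supply the details the paper leaves implicit: the Reidemeister--Schreier presentation $\Delta^{+}=\langle s,t\mid s^{4}=(t^{-1}s)^{4}=1\rangle$, its identification with the presentation of $\Gamma$ produced by the Poincar\'e polyhedron theorem, the transfer of discreteness to a finite-index overgroup, and the elliptic dichotomy (finite order kills faithfulness, infinite order kills discreteness) for the converse, which the paper never writes down at all. All of that is correct, including the \emph{a posteriori} argument that $t^{-1}s^{2}$ has infinite order in $\Delta^{+}$ because its image at $\theta=\pi/3$ is parabolic in a representation already known to be faithful on $\Delta^{+}$.

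The one step that does not stand as written is your justification that no odd word lies in $\ker\rho$. You assert that ``every defining relation of $\langle I_1,I_2,I_3\rangle$ has even length in $I_1,I_2,I_3$ and hence word-parity is an invariant of the image,'' but the defining relations of the image subgroup of $\pu$ are exactly what is not yet known: if the kernel contained an odd element there would be an odd-length relation, so this argument assumes what it is meant to prove. The ``direct Ford-domain check'' you gesture at is the correct repair and is available from the paper's own lemmas: by Proposition \ref{prop:symmetry}(2) the involution $I_2$ permutes the family of isometric spheres defining $D$, hence $I_2(D)=D$; if $I_2$ lay in $\Gamma$, the Poincar\'e polyhedron theorem (with $D$ a fundamental domain for the cosets of $\langle T\rangle$ in $\Gamma$) would force $I_2\in\langle T\rangle$, which is impossible since $I_2$ has order two while $\langle T\rangle\cong\mathbb{Z}$ is torsion-free. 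Since $I_1=I_2T$ and $I_3=I_2S$, the same conclusion follows for the other two generators, and an odd kernel element $\sigma_i w$ would force $I_i=\rho(w)^{-1}\in\Gamma$. (Alternatively, a normal subgroup of order two meeting $\Delta^{+}$ trivially would be central, and the hyperbolic triangle reflection group $\Delta_{4,4,\infty}$ has trivial center.) With that substitution your proof is complete.
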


This answers Conjecture \ref{conj:schwartz} on the complex hyperbolic $(4,4,\infty)$ triangle group.

\section{The manifold at infinity}\label{sec:manifold}

In this section, we study the group $\Gamma$ in the case when $\theta=\pi/3$. That is the group $\Gamma=\langle S, T \rangle=\langle I_2I_3, I_2I_1 \rangle$ with $T^{-1}S^2=I_1I_3I_2I_3$ being parabolic.

In this case, the Ford domain $D$ has additional ideal vertices on $\partial\hc$, which are parabolic fixed points corresponding to the conjugators of $T^{-1}S^2$. By intersecting a fundamental domain for $\langle T \rangle$ acting on $\partial\hc$ with the ideal boundary of $D$, we obtain a fundamental domain for $\Gamma$ acting on its discontinuity region $\Omega(\Gamma)$.

Topologically, this fundamental domain is the  unknotted cylinder cross a ray (see Proposition \ref{prop:cylinder}). By cutting and gluing we obtain two polyhedra $\mathcal{P}_{+}$ and $\mathcal{P}_{-}$ (see Proposition \ref{prop:polyhedra}). Gluing $\mathcal{P}_{-}$ to $\mathcal{P}_{+}$ by $S^{-1}$, we obtain a polyhedron $\mathcal{P}$. By studying the combinatorial properties of $\mathcal{P}$, we show that the quotient $\Omega(\Gamma)/\Gamma$ is homeomorphic to the two-cusped hyperbolic 3-manifold $s782$.

\begin{rem}
We will use $\tilde{s}_k^{+}$ (respectively, $\tilde{s}_k^{-}$, $\tilde{s}_k^{\star}$, and $\tilde{s}_k^{\diamond}$) to denote the ideal boundary of the side of $D$ contained in the ideal boundary of the isometric sphere $\mathcal{I}_k^{+}$ (respectively, $\mathcal{I}_k^{-}$, $\mathcal{I}_k^{\star}$ and $\mathcal{I}_k^{\diamond}$).
\end{rem}

\begin{defn}\label{def:points}
In the Heisenberg coordinate, we define the  points
 \begin{eqnarray*}
 q_2&=& \left[(-3+i\sqrt{3})/2, -\sqrt{3}\right],\\
 q_3&=&\left[(1+i\sqrt{3})/2, \sqrt{3}\right],\\
 p_2&=&\left[(-1+i\sqrt{3})/2, -\sqrt{3}\right],\\
 p_3&=&\left[(3+i\sqrt{3})/2, \sqrt{3}\right],\\
 p_4&=&\left[\frac{1}{6}\left(4+\sqrt{6}+i(4\sqrt{3}-\sqrt{2})\right), 0 \right],\\
 p_5&=&\left[\frac{1}{6}\left(4-\sqrt{6}+i(4\sqrt{3}+\sqrt{2})\right), 0 \right], \\
 p_6&=&\left[\frac{1}{6}\left(2-\sqrt{6}+i(2\sqrt{3}+\sqrt{2})\right), -\frac{4\sqrt{2}}{3} \right],\\
 p_7&=&\left[\frac{1}{6}\left(2+\sqrt{6}+i(2\sqrt{3}-\sqrt{2})\right), \frac{4\sqrt{2}}{3} \right],\\
 p_8 &=& \left[\frac{1}{6}\left(-2+\sqrt{6}+i(2\sqrt{3}+\sqrt{2})\right), \frac{4\sqrt{2}}{3} \right],\\
 p_9&=& \left[\frac{1}{6}\left(-2-\sqrt{6}+i(2\sqrt{3}-\sqrt{2})\right), -\frac{4\sqrt{2}}{3} \right],\\
p_{10}&=& \left[\frac{1}{6}\left(-4+\sqrt{6}+i(4\sqrt{3}+\sqrt{2})\right), 0 \right],\\
p_{11}&=& \left[\frac{1}{6}\left(-4-\sqrt{6}+i(4\sqrt{3}-\sqrt{2})\right), 0 \right],
 \end{eqnarray*}
and  the other points
$$p_{12}=T(p_9), \quad p_{13}=T(p_8), \quad p_{14}=T(p_{11}), \quad p_{15}=T(p_{10}).$$

\end{defn}

By Proposition \ref{prop:triple} and Corollary \ref{cor:intersections}, we have the following.
\begin{prop}\label{prop:points}
The points defined in Definition \ref{def:points} have the following properties.
\begin{itemize}
  \item $p_4,p_5,p_6,p_7$ are the four points on the ideal boundary of $\mathcal{I}_{0}^{+} \cap \mathcal{I}_{0}^{-} \cap \mathcal{I}_{0}^{\star}$, which are described in Proposition \ref{prop:triple};
  \item $p_8,p_9,p_{10},p_{11}$ are the four points on the ideal boundary of $\mathcal{I}_{0}^{+} \cap \mathcal{I}_{-1}^{-} \cap \mathcal{I}_{0}^{\diamond}$;
  \item $p_{12}, p_{13},p_{14},p_{15}$ are the four points on the ideal boundary of $\mathcal{I}_{1}^{+} \cap \mathcal{I}_{0}^{-} \cap \mathcal{I}_{1}^{\diamond}$;
  \item $p_2$ (resp. $p_3$) is the parabolic fixed point of $T^{-1}S^2$ (resp. $S^2T^{-1}$), which is the intersection of four isometric spheres $\mathcal{I}_{0}^{+} \cap \mathcal{I}_{-1}^{-} \cap \mathcal{I}_{0}^{\star} \cap \mathcal{I}_{-1}^{\star}$ (resp. $\mathcal{I}_{1}^{+} \cap \mathcal{I}_{0}^{-} \cap \mathcal{I}_{0}^{\star} \cap \mathcal{I}_{1}^{\star}$);
  \item $q_3$ (resp. $q_2$) is the parabolic fixed point of $ST^{-1}S$ (resp. $T^{-1}ST^{-1}ST$), which is the intersection of the four isometric spheres $\mathcal{I}_{0}^{+} \cap \mathcal{I}_{0}^{-} \cap \mathcal{I}_{0}^{\diamond} \cap \mathcal{I}_{1}^{\diamond}$ (resp. $\mathcal{I}_{-1}^{+} \cap \mathcal{I}_{-1}^{-} \cap \mathcal{I}_{0}^{\diamond} \cap \mathcal{I}_{-1}^{\diamond}$).
\end{itemize}
\end{prop}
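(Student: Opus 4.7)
The proof is essentially a direct verification guided by the symmetries and the triple--intersection computation already established. For the boundary points $p_4, p_5, p_6, p_7$ of $\mathcal{I}_0^+ \cap \mathcal{I}_0^- \cap \mathcal{I}_0^\star$, Proposition \ref{prop:triple} explicitly lists the four endpoints of this triple intersection in geographic coordinates as $q(\pm \arccos(1/3), \theta, \sqrt{3}/3)$ and $q(0, \arccos(\pm 2\sqrt{2}/3)+\theta, \pm 1)$. Substituting $\theta = \pi/3$ into the standard lift (\ref{eq:geog-coor}) and converting to Heisenberg coordinates via (\ref{eq:lift}) yields precisely the four points $p_4, p_5, p_6, p_7$ after a careful but routine calculation (the radicals $\pm \sqrt{2}/3$ and the $\sqrt{3}$--terms in Definition \ref{def:points} arise from $\cos(\pi/3)$ and $\sin(\pi/3)$).

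For $p_8, p_9, p_{10}, p_{11}$ on $\mathcal{I}_0^+ \cap \mathcal{I}_{-1}^- \cap \mathcal{I}_0^\diamond$, I would invoke the antiholomorphic involution $\tau$ from Proposition \ref{prop:symmetry}, which satisfies $\tau(\mathcal{I}_0^+) = \mathcal{I}_0^+$, $\tau(\mathcal{I}_0^-) = \mathcal{I}_{-1}^-$ and $\tau(\mathcal{I}_0^\star) = \mathcal{I}_0^\diamond$. Thus $\tau$ sends the first triple intersection bijectively onto the second, and applying $\tau$ to the four points of item~(1) and re--expressing in Heisenberg coordinates recovers the formulas in Definition \ref{def:points}. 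The third set $p_{12}, p_{13}, p_{14}, p_{15}$ is then immediate from the definitions $p_{12}=T(p_9)$, etc., together with the identity $T \cdot \bigl(\mathcal{I}_0^+ \cap \mathcal{I}_{-1}^- \cap \mathcal{I}_0^\diamond\bigr) = \mathcal{I}_1^+ \cap \mathcal{I}_0^- \cap \mathcal{I}_1^\diamond$.

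For the parabolic fixed points, observe that each of $T^{-1}S^2$, $S^2T^{-1}$, $ST^{-1}S$, $T^{-1}ST^{-1}ST$ is parabolic when $\theta = \pi/3$: the first is $I_1I_3I_2I_3$ by construction (Proposition \ref{prop:trianle}), and the others are obtained from it by cyclic conjugation. Their unique boundary fixed points can be read off as eigenvectors for eigenvalue $1$; standardizing the last entry and reading off the Heisenberg coordinates via (\ref{eq:lift}) gives the four points $p_2, p_3, q_2, q_3$. That each such fixed point lies on the claimed quadruple intersection of isometric spheres follows from Corollary \ref{cor:intersections}: for example, that corollary asserts that $\mathcal{I}_0^+$ becomes tangent to $\mathcal{I}_{-1}^\star$, and $\mathcal{I}_0^\star$ becomes tangent to $\mathcal{I}_{-1}^\star$, precisely at the fixed point of $T^{-1}S^2$; combined with Lemma \ref{lem:goldman} (which gives $\mathcal{I}(S^2) = \mathcal{I}(T^{-1}S^2)$ since $T^{-1}$ is unipotent and fixes $q_\infty$), one obtains $p_2 \in \mathcal{I}_0^+ \cap \mathcal{I}_{-1}^- \cap \mathcal{I}_0^\star \cap \mathcal{I}_{-1}^\star$. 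The points $p_3, q_2, q_3$ are handled in exactly the same way.

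The main obstacle is purely bookkeeping: one must juggle three coordinate systems (the $\mathrm{SU}(2,1)$--lifts, geographic coordinates on Cygan spheres, and Heisenberg coordinates on $\partial \hc$), keep track of sign conventions in the parabolic eigenvector computations, and verify that the resulting radical expressions match the (somewhat intricate) entries $(4\pm\sqrt{6})/6$, $\pm 4\sqrt{2}/3$, etc., appearing in Definition \ref{def:points}. No single step is conceptually deep, but the length of the verification is nontrivial and is best organized by exploiting the symmetries $\tau$, $T$, and the complex involution $I_2$ from Proposition \ref{prop:symmetry} so that only one representative in each orbit has to be computed by hand.
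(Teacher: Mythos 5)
Your proposal is correct and follows essentially the same route as the paper: identify $p_4,\dots,p_7$ by writing standard lifts and matching them with the four boundary points computed in Proposition \ref{prop:triple}, transport them by the symmetries $\tau$ and $T$ to get the second and third items, and identify the parabolic fixed points with the relevant multiple intersections via the tangency statements already recorded (the paper quotes Lemma \ref{lem:triple} for the fact that $p_2$ lies on the triple intersection $\mathcal{I}_0^{+}\cap\mathcal{I}_{-1}^{-}\cap\mathcal{I}_0^{\star}$ and then Corollary \ref{cor:intersections} for the fourth sphere, which is the same content as your eigenvector-plus-tangency argument).
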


\begin{proof}
As described in Proposition \ref{prop:triple}, all of the triple intersections $\mathcal{I}_{0}^{+} \cap \mathcal{I}_{0}^{-} \cap \mathcal{I}_{0}^{\star}$, $\mathcal{I}_{0}^{+} \cap \mathcal{I}_{-1}^{-} \cap \mathcal{I}_{0}^{\diamond}$ and $\mathcal{I}_{1}^{+} \cap \mathcal{I}_{0}^{-} \cap \mathcal{I}_{1}^{\diamond}$ have exactly four points lying on $\partial\hc$. When write the standard lifts of $p_4,p_5,p_6,p_7$, one can see that they are the four points in the proof of Proposition \ref{prop:triple}. Thus the first item is proved.

By Proposition \ref{prop:symmetry}, the four points of $\mathcal{I}_{0}^{+} \cap \mathcal{I}_{-1}^{-} \cap \mathcal{I}_{0}^{\diamond}$ are the images of $p_4,p_5,p_6,p_7$ under the antiholomorphic involution $\tau$, which are $p_{11},p_{10},p_8,p_9$.

The second item and the fact that $\mathcal{I}_{1}^{+} \cap \mathcal{I}_{0}^{-} \cap \mathcal{I}_{1}^{\diamond}=T(\mathcal{I}_{0}^{+} \cap \mathcal{I}_{-1}^{-} \cap \mathcal{I}_{0}^{\diamond})$ imply the third item.

We will only prove the statement for $p_2$ in the last two items. The others can be described by similar arguments. By Lemma \ref{lem:triple}, $p_2$ is the parabolic fixed point of $T^{-1}S^2$ and is the triple intersection $\mathcal{I}_{0}^{+} \cap \mathcal{I}_{-1}^{-} \cap \mathcal{I}_{0}^{\star}$. By Corollary \ref{cor:intersections} (3), $\mathcal{I}_{0}^{\star}$ is tangent with $\mathcal{I}_{-1}^{\star}$ at $p_2$. This completes the proof.
\end{proof}

\begin{figure}
\begin{center}
\begin{tikzpicture}
\node at (0,0) {\includegraphics[width=9cm,height=8cm]{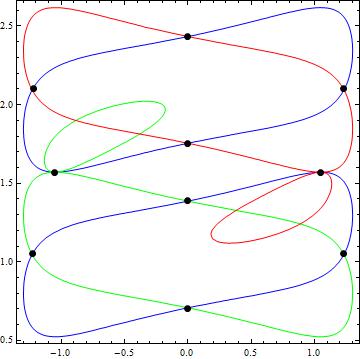}};
\coordinate [label=left:$p_9$] (S) at (-3.05,2);
\coordinate [label=above:$p_{11}$] (S) at (0.15,3.4);
\coordinate [label=above:$p_{10}$] (S) at (0.15,1);
\coordinate [label=above:$p_{5}$] (S) at (0.15,-1);
\coordinate [label=above:$p_{4}$] (S) at (0.15,-3.4);
\coordinate [label=below:$p_{2}$] (S) at (-3,0.1);
\coordinate [label=left:$p_{6}$] (S) at (-3,-1.7);
\coordinate [label=left:$p_{7}$] (S) at (4,-1.7);
\coordinate [label=left:$p_{8}$] (S) at (4.1,2);
\coordinate [label=above:$q_{3}$] (S) at (3.6,0.2);
\coordinate [label=above:$\alpha$] (S) at (-0.5,-4.5);
\coordinate [label=above:$\beta$] (S) at (-4.8,0.1);;
\end{tikzpicture}
\end{center}
  \caption{ Intersections of the isometric spheres $\mathcal{I}_0^{-}$, $\mathcal{I}_{-1}^{-}$, $\mathcal{I}_0^{\ast}$, $\mathcal{I}_{-1}^{\ast}$, $\mathcal{I}_0^{\diamond}$, $\mathcal{I}_1^{\diamond}$ with $\mathcal{I}_0^{+}$ in $\partial\hc$, viewed in geographical coordinates. Here $\alpha \in [-\pi/2,\pi/2]$ in the vertical coordinate and  $\beta \in [0,\pi]$ in the horizontal one. The region exterior to  the six  Jordan closed curves has two connect components. One of them is the topological octagon  with vertices $p_2$, $p_6$, $p_4$, $p_7$, $q_3$, $p_8$, $p_{11}$, $p_9$.  The other one is a topological quadrilateral  with vertices $p_2$, $p_5$, $q_3$, $p_{10}$. }
  \label{figure:ideal-side-plus}
\end{figure}

Now we study the combinatorial properties of the sides. See Figure \ref{figure:ideal-side-plus} and Figure \ref{figure:cylinder}.
\begin{prop}\label{prop:side-plus}
The interior of the side $\tilde{s}_0^{+}$ has two connected components.
\begin{itemize}
  \item One of them is an octagon, denoted by $\mathcal{O}_0^{+}$, whose vertices are $p_2$, $p_6$, $p_4$, $p_7$, $q_3$, $p_8$, $p_{11}$ and $p_9$.
  \item The other one is a quadrilateral, denoted by $\mathcal{Q}_0^{+}$, whose vertices are $p_2$, $p_5$, $q_3$ and $p_{10}$.
\end{itemize}
\end{prop}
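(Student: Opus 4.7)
The plan is to study the ideal boundary of the Cygan sphere $\mathcal{I}_0^{+}$, a topological $2$-sphere in $\partial\hc$ that I shall denote $\partial\mathcal{I}_0^{+}$, parametrized by geographic coordinates $(\alpha,\beta,w)$ with $\alpha\in[-\pi/2,\pi/2]$, $\beta\in[0,\pi)$ and $w=\pm\sqrt{\cos\alpha}$. By Corollary \ref{cor:intersections}(1) the only isometric spheres that can meet $\mathcal{I}_0^{+}$ are the six spheres $\mathcal{I}_0^{-},\mathcal{I}_{-1}^{-},\mathcal{I}_0^{\star},\mathcal{I}_{-1}^{\star},\mathcal{I}_0^{\diamond},\mathcal{I}_1^{\diamond}$, so $\tilde s_0^{+}$ is the region of $\partial\mathcal{I}_0^{+}$ lying in the exterior of all six of these Cygan balls. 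First I would derive the defining equation of each of these six Cygan spheres on $\partial\mathcal{I}_0^{+}$: three of them (for $\mathcal{I}_0^{-},\mathcal{I}_{-1}^{-},\mathcal{I}_0^{\star}$) are given by setting $w=\pm\sqrt{\cos\alpha}$ in the functions $f_0^{-},f_{-1}^{-},f_0^{\star}$ of Lemma \ref{lem:functions}, and the remaining three follow either by parallel Cygan distance computations or, more efficiently, via the antiholomorphic involution $\tau$ of Proposition \ref{prop:symmetry}(1), which preserves $\mathcal{I}_0^{+}$ and sends $\mathcal{I}_0^{\star}\mapsto\mathcal{I}_0^{\diamond}$ and $\mathcal{I}_{-1}^{\star}\mapsto\mathcal{I}_1^{\diamond}$. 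By Proposition \ref{prop:connect} each equation cuts out a single Jordan closed curve on $\partial\mathcal{I}_0^{+}$, giving the six closed curves drawn in Figure \ref{figure:ideal-side-plus}.

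Next I would locate all intersection points of these six Jordan curves. By Proposition \ref{prop:points}, the points $p_4,p_5,p_6,p_7$ are the four points of $\partial\mathcal{I}_0^{+}\cap\partial\mathcal{I}_0^{-}\cap\partial\mathcal{I}_0^{\star}$, the points $p_8,p_9,p_{10},p_{11}$ are the four points of $\partial\mathcal{I}_0^{+}\cap\partial\mathcal{I}_{-1}^{-}\cap\partial\mathcal{I}_0^{\diamond}$, and the parabolic fixed points $p_2$ and $q_3$ are \emph{quadruple} intersection points: $p_2\in\partial\mathcal{I}_0^{+}\cap\partial\mathcal{I}_{-1}^{-}\cap\partial\mathcal{I}_0^{\star}\cap\partial\mathcal{I}_{-1}^{\star}$, with $\partial\mathcal{I}_0^{\star}$ tangent to $\partial\mathcal{I}_{-1}^{\star}$ at $p_2$ by Corollary \ref{cor:intersections}(3), and symmetrically $q_3\in\partial\mathcal{I}_0^{+}\cap\partial\mathcal{I}_0^{-}\cap\partial\mathcal{I}_0^{\diamond}\cap\partial\mathcal{I}_1^{\diamond}$. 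Substituting the Heisenberg coordinates from Definition \ref{def:points} into the six defining functions confirms the incidence pattern of Figure \ref{figure:ideal-side-plus}, and by Corollary \ref{cor:intersections} no further intersection is visible on $\partial\mathcal{I}_0^{+}$, since all remaining pairwise intersections of the six relevant spheres lie in the interior of $\mathcal{I}_0^{+}$.

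Finally, I would exploit the $\tau$-symmetry together with the cyclic orders on the six Jordan curves (read off from the explicit equations) to trace the boundary of each connected component of the exterior region. This yields the octagon $\mathcal{O}_0^{+}$ with cyclic vertex order $p_2,p_6,p_4,p_7,q_3,p_8,p_{11},p_9$ and the quadrilateral $\mathcal{Q}_0^{+}$ with cyclic vertex order $p_2,p_5,q_3,p_{10}$. The main obstacle is the local analysis at the parabolic vertices $p_2$ and $q_3$, where four Jordan curves meet with two pairs mutually tangent: a naive transverse-crossing picture is insufficient there, and I would use the tangency data from Corollary \ref{cor:intersections}(3)--(4) together with a sign check on the six defining functions at points slightly perturbed from $p_2$ and $q_3$ to confirm that precisely two of the local sectors at each parabolic vertex lie in the exterior of all six Cygan balls, one of them belonging to the octagon and the other to the quadrilateral.
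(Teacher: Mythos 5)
Your plan is sound and reaches the correct conclusion, but it organizes the argument differently from the paper. You reconstruct $\tilde{s}_0^{+}$ from scratch as the exterior, on the spinal $2$-sphere $\partial_\infty\mathcal{I}_0^{+}$, of the arrangement of six Jordan curves cut out by the neighbouring isometric spheres, and then trace components using the full incidence data. The paper instead leans on Proposition \ref{prop:sides}, already proved for all $\theta\in[0,\pi/3]$: for $\theta<\pi/3$ the ideal boundary $\tilde{s}_0^{+}$ is an annulus whose two boundary circles are the ideal boundaries of the ridge-unions $\triangle_1$ and $\triangle_2$; at $\theta=\pi/3$ these two circles meet at exactly the two points $p_2$ and $q_3$ (Lemma \ref{lem:triple} and Corollary \ref{cor:intersections}), so the annulus pinches and its interior splits into exactly two components. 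The vertex lists are then read off the two boundary circles via Proposition \ref{prop:points}, and the symmetry $\tau$ sorts which component is the quadrilateral and which the octagon --- a step you also use. What each approach buys: the paper's degeneration argument gets the count of two components essentially for free and avoids having to analyse the curves $\partial_\infty\mathcal{I}_{-1}^{\star}$ and $\partial_\infty\mathcal{I}_{1}^{\diamond}$ at all (they are nested inside the $\partial_\infty\mathcal{I}_{-1}^{-}$ and $\partial_\infty\mathcal{I}_{0}^{-}$ curves and contribute only the points $p_2$, $q_3$); your route is more self-contained but correspondingly heavier, since you must verify the complete intersection pattern of six curves and, as you rightly flag, do a careful local analysis at the two parabolic vertices where tangent pairs of curves meet --- a point the paper's proof passes over in silence. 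One small caution: Proposition \ref{prop:connect} concerns the connectedness of the Giraud disk in $\hc\cup\partial\hc$, so you should say explicitly that the single Jordan curve you use is the ideal boundary of that disk rather than citing the proposition as if it directly asserted connectedness of the trace on $\partial_\infty\mathcal{I}_0^{+}$.
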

\begin{proof}
By Proposition \ref{prop:sides}, when $\theta< \pi/3$, the side $\tilde{s}_0^{+}$ is topologically an annulus bounded by two disjoint simple closed curves which are the union of
the ideal boundaries of the ridges $s_{0}^{+} \cap s_{-1}^{-}$ and $s_{0}^{+}  \cap s_{0}^{\diamond}$, respectively $s_{0}^{+} \cap s_{0}^{-}$ and $s_{0}^{+}  \cap s_{0}^{\star}$.
When $\theta=\pi/3$, these two curves intersect at two points, which divide $\tilde{s}_0^{+}$ into two parts. That is to say the interior of the side $\tilde{s}_0^{+}$ has two connected components.

By Proposition \ref{prop:points}, the ideal boundary of the ridge $s_{0}^{+} \cap s_{-1}^{-}$ (resp. $s_{0}^{+}  \cap s_{0}^{\diamond}$) is a union of two disjoint Jordan arcs $[p_9,p_{10}]$ and $[p_8, p_{11}]$ (resp. $[p_{10},p_8]$ and $[p_{11},p_9]$), the ideal boundary of the ridge $s_{0}^{+} \cap s_{0}^{-}$ (resp. $s_{0}^{+}  \cap s_{0}^{\star}$) is a union of two disjoint Jordan arcs $[p_5,p_7]$ and $[p_4,p_6]$ (resp. $[p_7,p_4]$ and $[p_6,p_5]$).
Since $p_2$ is the intersection of four isometric spheres $\mathcal{I}_{0}^{+} \cap \mathcal{I}_{-1}^{-} \cap \mathcal{I}_{0}^{\star} \cap \mathcal{I}_{-1}^{\star}$, it lies on $[p_9,p_{10}]$ and $[p_6,p_5]$. Similarly, $q_3$ lies on $[p_5,p_7]$ and $[p_{10},p_8]$.

By Proposition \ref{prop:symmetry}, the antiholomorphic involution $\tau$ preserves $\tilde{s}_0^{+}$ and interchanges its boundaries.
It is easy to check that $\tau$ interchanges $p_2$ and $q_3$, $p_5$ and $p_{10}$, $p_4$ and $ p_{11}$, $p_6$ and $p_{8}$, $p_7$ and $p_9$.
Thus one part of $\tilde{s}_0^{+}$ is a quadrilateral with vertices $p_2$, $p_5$, $q_3$ and $p_{10}$, denoted by $\mathcal{O}_0^{+}$.
The other one is an octagon with vertices $p_2$, $p_6$, $p_4$, $p_7$, $q_3$, $p_8$, $p_{11}$ and $p_9$, denoted by $\mathcal{Q}_0^{+}$.
Both of them are preserved by $\tau$.
\end{proof}

According to the symmetry $I_2$ in Proposition \ref{prop:symmetry}, which interchanges $\mathcal{I}_{0}^{+} $ and $ \mathcal{I}_{0}^{-}$, we have the following.
\begin{prop}\label{prop:side-minus}
The interior of the side $\tilde{s}_0^{-}$ has two connected components.
\begin{itemize}
  \item One of them is an octagon, denoted by $\mathcal{O}_0^{-}$, whose vertices are $p_5$, $p_6$, $p_4$, $p_3$, $p_{15}$, $p_{13}$, $p_{14}$ and $q_3$.
  \item The other is a quadrilateral, denoted by $\mathcal{Q}_0^{-}$, whose vertices are $p_3$, $p_7$, $q_3$ and $p_{12}$.
\end{itemize}
\end{prop}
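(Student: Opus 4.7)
The plan is to transport the decomposition of $\tilde{s}_0^+$ obtained in Proposition \ref{prop:side-plus} onto $\tilde{s}_0^-$ by means of the holomorphic involution $I_2$ from Proposition \ref{prop:symmetry}(2). Since $I_2$ interchanges $\mathcal{I}_0^+$ and $\mathcal{I}_0^-$ and permutes the entire family of isometric spheres $\{\mathcal{I}_k^\pm,\mathcal{I}_k^\star,\mathcal{I}_k^\diamond\}$ bounding $D$, it restricts to a homeomorphism from $\tilde{s}_0^+$ onto $\tilde{s}_0^-$. Hence the interior of $\tilde{s}_0^-$ breaks up into exactly two connected components, the $I_2$-images of the octagon $\mathcal{O}_0^+$ and the quadrilateral $\mathcal{Q}_0^+$; these are the sought $\mathcal{O}_0^-$ and $\mathcal{Q}_0^-$.

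To identify the vertex labels, I would compute the action of $I_2$ on the special points listed in Proposition \ref{prop:points}. Using $I_2 S I_2 = S^{-1}$ and $I_2 T I_2 = T^{-1}$ (which follow from $S=I_2I_3$, $T=I_2I_1$), conjugation gives
\[
I_2(T^{-1}S^2)I_2 = TS^{-2} = (S^2T^{-1})^{-1},\qquad I_2(ST^{-1}S)I_2 = (ST^{-1}S)^{-1},
\]
so $I_2$ sends $p_2$ to $p_3$ and fixes $q_3$. By Proposition \ref{prop:symmetry}(2), $I_2$ preserves $\mathcal{I}_0^+\cap\mathcal{I}_0^-\cap\mathcal{I}_0^\star$ setwise, so it permutes $\{p_4,p_5,p_6,p_7\}$; moreover, it sends $\mathcal{I}_0^+\cap\mathcal{I}_{-1}^-\cap\mathcal{I}_0^\diamond$ to $\mathcal{I}_0^-\cap\mathcal{I}_1^+\cap\mathcal{I}_1^\diamond$, hence maps $\{p_8,p_9,p_{10},p_{11}\}$ bijectively onto $\{p_{12},p_{13},p_{14},p_{15}\}$. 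A direct evaluation of $I_2$ on the standard Heisenberg lifts (or, equivalently, matching the cyclic orderings of the boundary arcs of the two components using that $I_2$ conjugates the cyclic $S$-action of Proposition \ref{prop:triple} to its inverse) yields explicit pairings such as $p_4\leftrightarrow p_6$, $p_5\leftrightarrow p_7$, $p_{10}\leftrightarrow p_{12}$ (note that $p_{10}\leftrightarrow p_{12}$ is automatic from $p_9\leftrightarrow p_{15}$ and $p_{12}=T(p_9)$, $p_{15}=T(p_{10})$), from which substitution into the vertex lists of $\mathcal{O}_0^+$ and $\mathcal{Q}_0^+$ produces precisely the lists claimed for $\mathcal{O}_0^-$ and $\mathcal{Q}_0^-$.

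The main obstacle is purely combinatorial bookkeeping in the last step: one must verify that the $I_2$-images of the ordered vertices of $\mathcal{O}_0^+$ and $\mathcal{Q}_0^+$ reassemble in the correct cyclic order on $\tilde{s}_0^-$. This is a routine but careful computation, most easily carried out by evaluating $I_2$ directly on the explicit Heisenberg coordinates of $p_4,\ldots,p_{11}$ given in Definition \ref{def:points} and cross-checking that the parabolic fixed points $p_3$ and $q_3$ sit in the positions demanded by the statement.
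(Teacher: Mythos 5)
Your proposal is correct and follows essentially the same route as the paper: the paper likewise invokes the complex involution $I_2$ of Proposition \ref{prop:symmetry}(2) to carry $\tilde{s}_0^{+}$ homeomorphically onto $\tilde{s}_0^{-}$ and then records the explicit vertex correspondence $(q_3,p_5,p_2,p_{10},p_8,p_{11},p_9,p_6)\leftrightarrow(q_3,p_7,p_3,p_{12},p_{14},p_{13},p_{15},p_4)$, which matches the pairings you derive (including $I_2(q_3)=q_3$, $I_2(p_2)=p_3$, and the $T$-conjugation bookkeeping for $p_{12},\dots,p_{15}$).
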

\begin{proof}
Note that side $s_0^{-}$ is bounded by the ridges $s_{0}^{-} \cap s_{1}^{+}$, $s_{0}^{-}  \cap s_{1}^{\diamond}$, $s_{0}^{-} \cap s_{0}^{+}$ and $s_{0}^{-}  \cap s_{0}^{\star}$.
By Proposition \ref{prop:symmetry}, the side $s_0^{-}$ is isometric to $s_0^{+}$ under the complex involution $I_2$. Thus its ideal boundary $\tilde{s}_0^{-}$ will be also isometric to $\tilde{s}_0^{+}$. This implies that $\tilde{s}_0^{-}$ has the same combinatorial properties as $\tilde{s}_0^{+}$.
One can check that
$$
I_2 : (q_3,p_5,p_2,p_{10},p_8, p_{11}, p_9, p_6) \leftrightarrow (q_3, p_7, p_3,p_{12}, p_{14}, p_{13}, p_{15}, p_4).
$$
Thus one part of $\tilde{s}_0^{-}$ is an octagon, denoted by $\mathcal{O}_0^{-}$, whose vertices are $p_5$, $p_6$, $p_4$, $p_3$, $p_{15}$, $p_{13}$, $p_{14}$ and $q_3$.
The other one is a quadrilateral, denoted by $\mathcal{Q}_0^{-}$, whose vertices are $p_3$, $p_7$, $q_3$ and $p_{12}$. See Figure \ref{figure:cylinder}.
\end{proof}

\begin{rem}
$q_3$ lies on the $\mathbb{C}$-circle associated to $I_2$, that is the ideal boundary of the complex line fixed by $I_2$.
One can also observe that $p_2$ is fixed by $I_1$.
\end{rem}

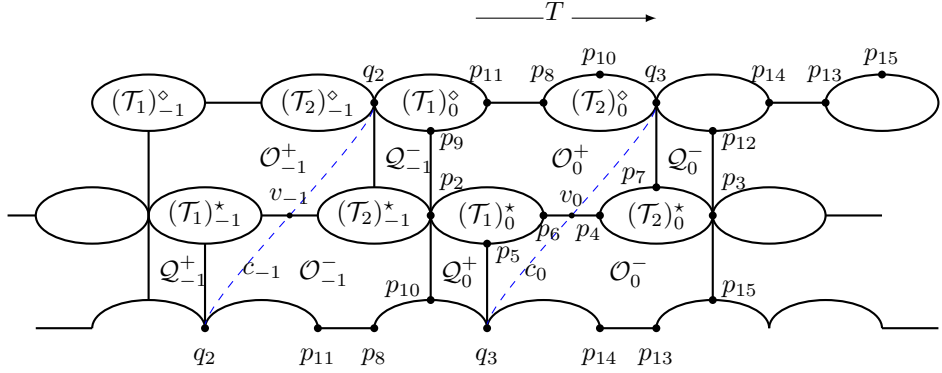
\begin{figure}
\begin{center}
\begin{tikzpicture}[scale=0.75]
\draw (-2.2,3.5) -- (-1,3.5);
\coordinate [label=above:$T$] (T) at (-0.8,3.3);
\draw[-latex] (-0.5,3.5) -- (1,3.5);

\draw[thick] (-9,0) ellipse (1 cm and 0.5 cm);
\draw[thick] (-7,0) ellipse (1 cm and 0.5 cm);
\draw[thick] (-4,0) ellipse (1 cm and 0.5 cm);
\draw[thick] (-2,0) ellipse (1 cm and 0.5 cm);
\draw[thick] (1,0) ellipse (1 cm and 0.5 cm);
\draw[thick] (3,0) ellipse (1 cm and 0.5 cm);
\draw[thick] (-3,2) ellipse (1 cm and 0.5 cm);
\draw[thick] (-5,2) ellipse (1 cm and 0.5 cm);
\draw[thick] (-8,2) ellipse (1 cm and 0.5 cm);
\draw[thick] (0,2) ellipse (1 cm and 0.5 cm);
\draw[thick] (2,2) ellipse (1 cm and 0.5 cm);
\draw[thick] (5,2) ellipse (1 cm and 0.5 cm);
\draw[thick] (-2,-2) arc (0:180:1 cm and 0.5 cm);
\draw[thick] (0,-2) arc (0:180:1 cm and 0.5 cm);
\draw[thick] (3,-2) arc (0:180:1 cm and 0.5 cm);
\draw[thick] (5,-2) arc (0:180:1 cm and 0.5 cm);
\draw[thick] (-5,-2) arc (0:180:1 cm and 0.5 cm);
\draw[thick] (-7,-2) arc (0:180:1 cm and 0.5 cm);
\draw[thick] (-3,1.5) -- (-3,0) -- (-3,-1.5);
\draw[thick] (-8,1.5) -- (-8,0) -- (-8,-1.5);
\draw[thick] (2,1.5) -- (2,0) -- (2,-1.5);
\draw[thick] (-2,-0.5) -- (-2,-2);
\draw[thick] (-4,0.5) -- (-4,2);
\draw[thick] (-7,-0.5) -- (-7,-2);
\draw[thick] (1,0.5) -- (1,2);
\draw[thick] (-1,0) -- (0,0);
\draw[thick] (-6,0) -- (-5,0);
\draw[thick] (4,0) -- (5,0);
\draw[thick] (-10.5,0) -- (-10,0);
\draw[thick] (-2,2) -- (-1,2);
\draw[thick] (-7,2) -- (-6,2);
\draw[thick] (0,-2) -- (1,-2);
\draw[thick] (3,2) -- (4,2);
\draw[thick] (-5,-2) -- (-4,-2);
\draw[thick] (-10,-2) -- (-9,-2);
\draw[thick] (5,-2) -- (6,-2);

\draw[dashed,blue] (-7,-2) .. controls (-7.1,-1.5) and (-3.9,1.5) .. (-4,2);
\draw[dashed,blue] (-2,-2) .. controls (-2.1,-1.5) and (1.1,1.5) .. (1,2);

\fill (-4,2) circle (2pt);
\fill (-3,1.5) circle (2pt);
\fill (-2,2) circle (2pt);
\fill (-1,2) circle (2pt);
\fill (0,2.5) circle (2pt);
\fill (1,2) circle (2pt);
\fill (2,1.5) circle (2pt);
\fill (3,2) circle (2pt);
\fill (4,2) circle (2pt);
\fill (5,2.5) circle (2pt);
\fill (-3,0) circle (2pt);
\fill (-2,-0.5) circle (2pt);
\fill (-1,0) circle (2pt);
\fill (0,0) circle (2pt);
\fill (1,0.5) circle (2pt);
\fill (2,0) circle (2pt);
\fill (-3,-1.5) circle (2pt);
\fill (-2,-2) circle (2pt);
\fill (0,-2) circle (2pt);
\fill (1,-2) circle (2pt);
\fill (2,-1.5) circle (2pt);
\fill (-7,-2) circle (2pt);
\fill (-5,-2) circle (2pt);
\fill (-4,-2) circle (2pt);
\coordinate [label=above:$q_2$] (q2) at (-4,2.2);
\coordinate [label=right:$p_9$] (p9) at (-3,1.3);
\coordinate [label=above:$p_{11}$] (p11) at (-2,2.2);
\coordinate [label=above:$p_8$] (p8) at (-1,2.2);
\coordinate [label=above:$p_{10}$] (p10) at (0,2.5);
\coordinate [label=above:$q_3$] (q3) at (1,2.2);
\coordinate [label=right:$p_{12}$] (p12) at (2,1.3);
\coordinate [label=above:$p_{14}$] (p14) at (3,2.2);
\coordinate [label=above:$p_{13}$] (p13) at (4,2.2);
\coordinate [label=above:$p_{15}$] (p15) at (5,2.5);
\coordinate [label=right:$p_2$] (p2) at (-3,0.6);
\coordinate [label=right:$p_5$] (p5) at (-2,-0.7);
\coordinate [label=below:$p_6$] (p6) at (-0.9,0);
\coordinate [label=above:$v_0$] (v0) at (-0.5,0);
\fill (-0.5,0) circle (1.5pt);
\coordinate [label=below:$p_4$] (p4) at (-0.2,0);
\coordinate [label=left:$p_7$] (p7) at (1,0.7);
\coordinate [label=right:$p_3$] (p3) at (2,0.6);
\coordinate [label=below:$q_2$] (q2) at (-7,-2.2);
\coordinate [label=below:$p_{11}$] (p11) at (-5,-2.2);
\coordinate [label=below:$p_8$] (p8) at (-4,-2.2);
\coordinate [label=left:$p_{10}$] (p10) at (-3,-1.3);
\coordinate [label=below:$q_3$] (q3) at (-2,-2.2);
\coordinate [label=below:$p_{14}$] (p14) at (0,-2.2);
\coordinate [label=below:$p_{13}$] (p13) at (1,-2.2);
\coordinate [label=right:$p_{15}$] (p15) at (2,-1.3);
\coordinate [label=left:$\mathcal{O}^{+}_{0}$] (O^{+}_0) at (0,1);
\coordinate [label=right:$\mathcal{O}^{-}_{0}$] (O^{-}_0) at (0,-1);
\coordinate [label=right:$\mathcal{Q}^{+}_{0}$] (Q^{+}_0) at (-3,-1);
\coordinate [label=right:$\mathcal{Q}^{-}_{0}$] (Q^{-}_0) at (1,1);
\coordinate [label=above:$(\mathcal{T}_1)^{\star}_{0}$] (T_1^{1}) at (-2,-0.4);
\coordinate [label=above:$(\mathcal{T}_2)^{\star}_{0}$] (T_2^{1}) at (1,-0.4);
\coordinate [label=below:$(\mathcal{T}_1)^{\diamond}_{0}$] (T_1^{2}) at (-3,2.4);
\coordinate [label=below:$(\mathcal{T}_2)^{\diamond}_{0}$] (T_2^{2}) at (0,2.4);

\coordinate [label=left:$\mathcal{O}^{+}_{-1}$] (O^{+}_{-1}) at (-5,1);
\coordinate [label=right:$\mathcal{O}^{-}_{-1}$] (O^{-}_{-1}) at (-5.5,-1);
\coordinate [label=right:$\mathcal{Q}^{+}_{-1}$] (Q^{+}_{-1}) at (-8,-1);
\coordinate [label=above:$v_{-1}$] (v_{-1}) at (-5.5,0);
\fill (-5.5,0) circle (1.5pt);
\coordinate [label=right:$\mathcal{Q}^{-}_{-1}$] (Q^{-}_{-1}) at (-4,1);
\coordinate [label=above:$(\mathcal{T}_1)^{\star}_{-1}$] (T_1^{-1}) at (-7,-0.4);
\coordinate [label=above:$(\mathcal{T}_2)^{\star}_{-1}$] (T_2^{-1}) at (-4,-0.4);
\coordinate [label=below:$(\mathcal{T}_1)^{\diamond}_{-1}$] (T_1^{-2}) at (-8,2.4);
\coordinate [label=below:$(\mathcal{T}_2)^{\diamond}_{-1}$] (T_2^{-2}) at (-5,2.4);

\coordinate [label=right:$c_{-1}$] (c_{-1}) at (-6.5,-1);
\coordinate [label=right:$c_{0}$] (c_{0}) at (-1.5,-1);
\end{tikzpicture}
\end{center}
\caption{A combinatorial picture of $\partial U$. The top and bottom curves are identified. $\mathcal{O}_0^{\pm}$ (resp. $\mathcal{O}_{-1}^{\pm}$) is divided by $c_0$ (resp. $c_{-1}$) into a quadrilateral $\mathcal{Q'}_0^{\pm}$ (resp. $\mathcal{Q'}_{-1}^{\pm}$) and a heptagon $\mathcal{H}_{0}^{\pm}$ (resp. $\mathcal{H}_{-1}^{\pm}$). $v_0$ is the intersection of $c_0$ with the arc $[p_4,p_6]$. $v_{-1}$ is the intersection of $c_{-1}$ with the arc $[T^{-1}(p_4),T^{-1}(p_6)]$.}
\label{figure:cylinder}
\end{figure}

\begin{prop}\label{prop:side-star}
The interior of side $\tilde{s}_0^{\star}$ is a union of two disjoint triangles, denoted by $(\mathcal{T}_{1})_0^{\star}$ and $(\mathcal{T}_{2})_0^{\star}$,
whose vertices are $p_2$, $p_5$, $p_6$ and respectively, $p_3$, $p_4$, $p_7$.
\end{prop}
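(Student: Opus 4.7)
The plan is to leverage Proposition~\ref{prop:sides}(2), which already identifies $s_0^\star$ as a topological solid light cone with a two-component ideal boundary, and to refine that description by incorporating the tangent structure at $\theta=\pi/3$ to identify each component combinatorially as a triangle.

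First, I would use Corollary~\ref{cor:intersections}(3) to reduce the boundary analysis to two isometric spheres. The spheres in $\mathcal{S}$ that meet $\mathcal{I}_0^\star$ are $\mathcal{I}_0^\pm$, $\mathcal{I}_1^+$, $\mathcal{I}_{-1}^-$, $\mathcal{I}_0^\diamond$, $\mathcal{I}_1^\diamond$, together with $\mathcal{I}_{\pm 1}^\star$ tangent at $p_3$ and $p_2$ respectively. Every such intersection except those with $\mathcal{I}_0^+$ and $\mathcal{I}_0^-$ lies in the interior of one of $\mathcal{I}_0^+$ or $\mathcal{I}_0^-$. Consequently, $\tilde{s}_0^\star$ equals the set of points on $\partial \mathcal{I}_0^\star$ lying in the closure of the exterior of both $\mathcal{I}_0^+$ and $\mathcal{I}_0^-$.

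Next, I would study the two simple closed curves $C_\pm^\star := \partial \mathcal{I}_0^\pm \cap \partial \mathcal{I}_0^\star$ on the 2-sphere $\partial \mathcal{I}_0^\star$. By Proposition~\ref{prop:triple} and Proposition~\ref{prop:points}(1), they cross transversely at $\{p_4, p_5, p_6, p_7\}$. The proof of Proposition~\ref{prop:side-plus} already identifies the arcs of $C_+^\star$ lying in the exterior of $\mathcal{I}_0^-$: one from $p_5$ to $p_6$ and one from $p_4$ to $p_7$. The antiholomorphic symmetry $I_2$ from Proposition~\ref{prop:symmetry}, which preserves $\mathcal{I}_0^\star$, swaps $\mathcal{I}_0^+ \leftrightarrow \mathcal{I}_0^-$, and sends $p_2 \leftrightarrow p_3$, $p_5 \leftrightarrow p_7$, $p_6 \leftrightarrow p_4$, then yields the arcs of $C_-^\star$ in the exterior of $\mathcal{I}_0^+$: one from $p_7$ to $p_4$ and one from $p_6$ to $p_5$. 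These four arcs together bound two disjoint open disks on $\partial \mathcal{I}_0^\star$.

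Finally, I would locate the tangent points on the boundary of each disk. According to Proposition~\ref{prop:side-plus}, $p_2$ is the pinch point on $\partial \mathcal{I}_0^+$ joining $\mathcal{O}_0^+$ (which has edge $[p_2, p_6]$ on $C_+^\star$) and $\mathcal{Q}_0^+$ (which has edge $[p_2, p_5]$ on $C_+^\star$), so the arc of $C_+^\star$ from $p_5$ to $p_6$ passes through $p_2$; by $I_2$-symmetry, the arc of $C_-^\star$ from $p_7$ to $p_4$ passes through $p_3$. Hence the first disk is a topological triangle with vertices $\{p_2, p_5, p_6\}$ and edges $[p_5,p_2]$, $[p_2,p_6]$ on $C_+^\star$ and $[p_6,p_5]$ on $C_-^\star$; the second is a triangle with vertices $\{p_3, p_4, p_7\}$, edges obtained by $I_2$. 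The main obstacle is verifying the cyclic position of $p_2$ on $C_+^\star$, which reduces to the already-established combinatorics of $\tilde{s}_0^+$ in Proposition~\ref{prop:side-plus}, as the edges on $C_+^\star$ bounding $\mathcal{Q}_0^+$ and $\mathcal{O}_0^+$ at the pinch point $p_2$ uniquely determine the arc of $C_+^\star$ containing $p_2$.
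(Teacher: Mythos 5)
Your argument is correct and follows essentially the same route as the paper's proof: both identify $\tilde{s}_0^{\star}$ as the two disc components of $\partial\mathcal{I}_0^{\star}$ exterior to $\mathcal{I}_0^{+}$ and $\mathcal{I}_0^{-}$ (via Proposition \ref{prop:sides} and Corollary \ref{cor:intersections}), use the four crossing points $p_4,p_5,p_6,p_7$ from Proposition \ref{prop:triple} to see two bigons, and then insert the tangency points $p_2$ and $p_3$ as the third vertices. One small slip worth fixing: $I_2$ is the holomorphic complex involution of Proposition \ref{prop:symmetry}(2) (the antiholomorphic symmetry is $\tau$), though the mapping properties you actually invoke for it are the correct ones.
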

\begin{proof}
By Proposition \ref{prop:sides}, the side $\tilde{s}_0^{\star}$ is the union of two disjoint discs, which are bounded by the ideal boundary of the ridges $s_{0}^{+}  \cap s_{0}^{\star}$ and $s_{0}^{-}  \cap s_{0}^{\star}$.

As stated in Proposition \ref{prop:points}, the ideal boundary of $\mathcal{I}_{0}^{+} \cap \mathcal{I}_{0}^{-} \cap \mathcal{I}_{0}^{\star}$ contains the four points $p_4,p_5,p_6,p_7$. Thus $\tilde{s}_0^{\star}$ is the union of two disjoint bigons whose vertices are $p_5, p_6$, and respectively $p_4,p_7$.
Proposition \ref{prop:points} also tells us that $p_2$ and $p_3$ lie on different component of the boundaries of the two bigons.

Therefore, both of the components are triangles, denoted by $(\mathcal{T}_{1})_0^{\star}$ and $(\mathcal{T}_{2})_0^{\star}$, whose vertices are $p_2$, $p_5$, $p_6$ and respectively, $p_3$, $p_4$, $p_7$.
\end{proof}

According to the symmetry $\tau$ in Proposition \ref{prop:symmetry}, the side $\tilde{s}_0^{\diamond}$ has the same topological properties as the side $\tilde{s}_0^{\star}$.
Thus by a similar argument, we have the following.
\begin{prop}\label{prop:side-diamond}
The interior of side $\tilde{s}_0^{\diamond}$ is a union of two disjoint triangles, denoted by $(\mathcal{T}_{1})_0^{\diamond}$ and $(\mathcal{T}_{2})_0^{\diamond}$,
whose vectors are $q_2$, $p_9$, $p_{11}$ and respectively, $q_3$, $p_8$, $p_{10}$.
\end{prop}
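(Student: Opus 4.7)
The plan is to follow exactly the proof strategy of Proposition \ref{prop:side-star}, with the isometric sphere $\mathcal{I}_0^{\star}$ replaced by $\mathcal{I}_0^{\diamond}$ throughout. The key observation that lets me import that argument is the antiholomorphic involution $\tau$ from Proposition \ref{prop:symmetry}: because $\tau$ conjugates $S^2$ to $(S^{-1}T)^2$, it sends $\mathcal{I}_0^{\star}$ to $\mathcal{I}_0^{\diamond}$ and preserves the Ford domain $D$, so it carries the side $\tilde{s}_0^{\star}$ homeomorphically to $\tilde{s}_0^{\diamond}$ together with its combinatorial subdivision.

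In detail, first I would invoke Proposition \ref{prop:sides}(2) to conclude that $\tilde{s}_0^{\diamond}$ consists of two disjoint topological discs, each bounded by a simple closed curve made from arcs of the two ridges $s_0^{+}\cap s_0^{\diamond}$ and $s_{-1}^{-}\cap s_0^{\diamond}$. By Proposition \ref{prop:points}, the ideal triple intersection $\mathcal{I}_0^{+}\cap \mathcal{I}_{-1}^{-}\cap \mathcal{I}_0^{\diamond}$ consists of the four points $p_8, p_9, p_{10}, p_{11}$, which are precisely the points at which the two bounding ridges meet on $\partial \hc$ and hence are the corners of the two bigons. The ridge $s_0^{+}\cap s_0^{\diamond}$ contributes the two arcs $[p_{10},p_8]$ and $[p_{11},p_9]$ already computed in the proof of Proposition \ref{prop:side-plus}, and an analogous computation (or application of $\tau$) produces the two arcs of $s_{-1}^{-}\cap s_0^{\diamond}$ which pair up the endpoints to form bigons with vertex sets $\{p_9,p_{11}\}$ and $\{p_8,p_{10}\}$.

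Finally, Proposition \ref{prop:points} locates the parabolic fixed points on $\mathcal{I}_0^{\diamond}$: $q_3\in \mathcal{I}_0^{+}\cap \mathcal{I}_0^{\diamond}$ lies on the ridge $s_0^{+}\cap s_0^{\diamond}$ and $q_2\in \mathcal{I}_{-1}^{-}\cap \mathcal{I}_0^{\diamond}$ lies on the ridge $s_{-1}^{-}\cap s_0^{\diamond}$. Each of these points lies on the boundary of exactly one of the two bigons, and promoting it to a vertex converts that bigon into a triangle, yielding $(\mathcal{T}_1)_0^{\diamond}$ with vertices $q_2, p_9, p_{11}$ and $(\mathcal{T}_2)_0^{\diamond}$ with vertices $q_3, p_8, p_{10}$. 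The main point requiring care is the bookkeeping that matches each parabolic fixed point to the correct bigon; this is settled by reading off from Proposition \ref{prop:points} which isometric spheres contain $q_2$ and $q_3$, and is compatible with the symmetry $\tau$ (which is easily checked to interchange $p_2\leftrightarrow q_3$ and $p_3\leftrightarrow q_2$).
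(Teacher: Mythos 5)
Your proposal is correct and follows exactly the paper's route: the paper disposes of this proposition by noting that the symmetry $\tau$ of Proposition \ref{prop:symmetry} carries $\tilde{s}_0^{\star}$ to $\tilde{s}_0^{\diamond}$ and then repeating the argument of Proposition \ref{prop:side-star}, which is precisely what you do (and your bookkeeping of the vertices via Proposition \ref{prop:points}, including $\tau(p_2)=q_3$ and $\tau(p_3)=q_2$, matches the paper's labelling of $(\mathcal{T}_1)_0^{\diamond}$ and $(\mathcal{T}_2)_0^{\diamond}$). You have simply written out the details that the paper leaves as ``a similar argument.''
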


Let $U$ be the ideal boundary of $D$ on $\partial \hc$. Then the union of all the sides $\{\tilde{s}_{k}^{+}\}$, $\{\tilde{s}_{k}^{-}\}$, $\{\tilde{s}_{k}^{\star}\}$ and $\{\tilde{s}_{k}^{\diamond}\}$ for $k\in\mathbb{Z}$ form the boundary of $U$.

\begin{figure}
\begin{center}
\begin{tikzpicture}
\node at (0,0) {\includegraphics[width=8cm,height=8cm]{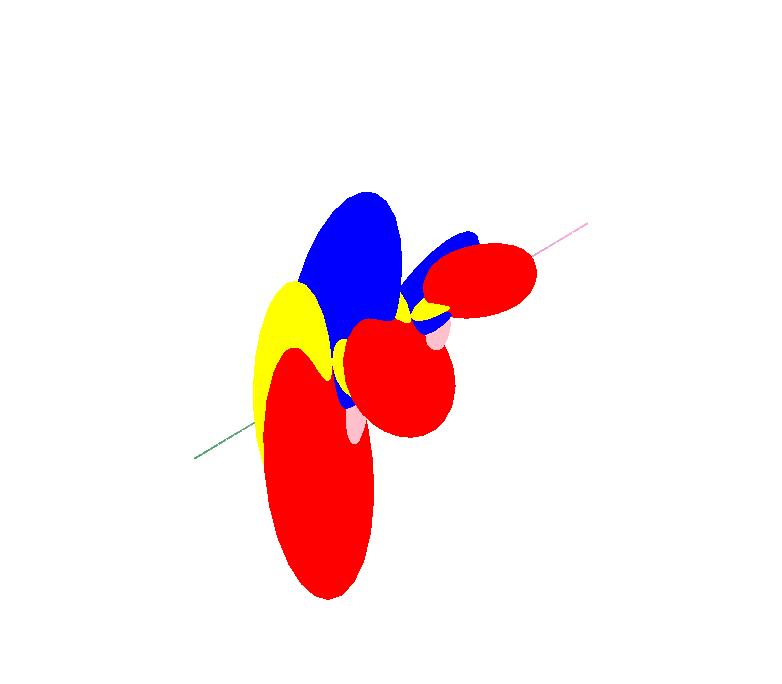}};
\coordinate [label=left:$\mathcal{I}_{-1}^{\diamond}$] (S) at (-1.2,0);

\coordinate [label=right:$L$] (S) at (2,1.5);

\coordinate [label=above:$\mathcal{I}_{-1}^{-}$] (q3) at (-1,1.5);
\coordinate [label=above:$\mathcal{I}_{0}^{-}$] (q3) at (0.5,1.2);

\coordinate [label=right:$\mathcal{I}_{0}^{\star}$] (q3) at (0.5,0);
\coordinate [label=right:$\mathcal{I}_{-}^{\star}$] (q3) at (-0.3,-1.4);

\coordinate [label=right:$\mathcal{I}_0^{+}$] (v1) at (0.5,-1);
\coordinate [label=below:$\mathcal{I}_{-1}^{+}$] (v1) at (-1.5,-2);
\coordinate [label=right:$\mathcal{I}_1^{+}$] (v1) at (1.2,0.5);
\end{tikzpicture}
\end{center}
  \caption{ A realistic picture of the ideal boundaries of the isometric spheres: $\mathcal{I}_{0}^{+}$, $\mathcal{I}_{1}^{+}$ and $\mathcal{I}_{-1}^{+}$ (red); $\mathcal{I}_{0}^{-}$ and $\mathcal{I}_{-1}^{-}$ (blue); $\mathcal{I}_{0}^{\star}$ and $\mathcal{I}_{-1}^{\star}$ (pink); $\mathcal{I}_{0}^{\diamond}$, $\mathcal{I}_{-1}^{\diamond}$ and $\mathcal{I}_{1}^{\diamond}$ (yellow). The line $L$ is the $T$-invariant $\mathbb{R}$-circle.}
  \label{figure:fd}
\end{figure}

\begin{prop}\label{prop:r-circle}
Let $L=\{ [x+i\sqrt{3}/2, \sqrt{3}x]\in\mathcal{N} : x\in \mathbb{R} \}$, then $L$ is a $T$-invariant $\mathbb{R}$-circle. Furthermore, $L$ is contained in the complement of $D$.
\end{prop}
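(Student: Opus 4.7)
The plan is to establish the three claims in sequence: that $L$ is an $\mathbb{R}$-circle, that $L$ is $T$-invariant, and that $L$ lies in the complement of $D$.

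For the first two claims I would work directly with the Heisenberg group structure. The $x$-axis $\{[x,0] : x \in \mathbb{R}\}$ in $\mathcal{N}$ is the ideal boundary of $\mathbf{H}^{2}_{\mathbb{R}}$, hence a straight $\mathbb{R}$-circle. Applying the Heisenberg translation $T_{[i\sqrt{3}/2,\, 0]}$ via the product formula $[z_0,t_0]\cdot[z,t] = [z_0+z,\, t_0+t-2\mathrm{Im}(\bar{z_0}z)]$ sends $[x,0]$ to $[x+i\sqrt{3}/2,\, \sqrt{3}x]$, so $L$ is the image of an $\mathbb{R}$-circle under an element of $\pu$, hence itself an $\mathbb{R}$-circle. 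At $\theta=\pi/3$ the generator $T$ is the Heisenberg translation $T_{[2,\,4\sqrt{3}]}$, and applying the product formula to a generic point of $L$ gives $T([x+i\sqrt{3}/2,\,\sqrt{3}x]) = [(x+2)+i\sqrt{3}/2,\,\sqrt{3}(x+2)] \in L$. Hence $L$ is $T$-invariant, and a fundamental interval for $\langle T\rangle$ acting on $L$ is $\{x \in [0,2)\}$.

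For the third claim I would cover this fundamental interval by interiors of isometric spheres from Definition \ref{def:isometric}. The natural candidates are $\mathcal{I}_k^{\star}$ (center $[2k+\tfrac{1}{2}+i\tfrac{\sqrt{3}}{2},\, 2k\sqrt{3}]$, radius $1$) and $\mathcal{I}_k^{\diamond}$ (center $[2k-\tfrac{1}{2}+i\tfrac{\sqrt{3}}{2},\, 2k\sqrt{3}]$, radius $1$), whose centers share the imaginary part $i\tfrac{\sqrt{3}}{2}$ with points of $L$. For $p = [x+i\tfrac{\sqrt{3}}{2},\, \sqrt{3}x]$ and $c$ either center, the difference $z-w$ of $z$-coordinates is real, and in the Cygan formula (\ref{eq:cygan-metric}) the vertical contribution $t-s+2\mathrm{Im}(z\bar{w})$ collapses to $\pm\sqrt{3}/2$. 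A direct computation then yields
$$
d_{\mathrm{Cyg}}(p,c)^{4} = \bigl(x - 2k \mp \tfrac{1}{2}\bigr)^{4} + \tfrac{3}{4},
$$
so the condition that $p$ lie in the interior of $\mathcal{I}_k^{\star}$ (respectively $\mathcal{I}_k^{\diamond}$) is equivalent to $|x - 2k \mp \tfrac{1}{2}| < 1/\sqrt{2}$. Each sphere therefore contributes an open arc of $L$ of half-length $1/\sqrt{2}$.

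The covering intervals are centered at all points of $\tfrac{1}{2}+\mathbb{Z}$, with consecutive centers at distance $1$. Since $1/\sqrt{2}>1/2$, consecutive intervals overlap and their union exhausts $\mathbb{R}$. In particular the fundamental interval $[0,2)$ is covered, and the claim follows. The main obstacle is just the bookkeeping in the Cygan-distance simplification: one must check that the $t$-coordinate contributions from $p$ and from each center cancel to leave only $\pm\sqrt{3}/2$. Once that calculation is in hand, the remainder of the argument is a one-line interval-covering inequality.
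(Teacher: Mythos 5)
Your proof is correct, and the overall strategy is the same as the paper's: show that a fundamental interval for $\langle T\rangle$ acting on $L$ is covered by the open interiors of isometric spheres, so that no point of $L$ survives in $D$. The only real difference is the choice of covering spheres. The paper covers the length-$2$ fundamental segment by the two large spheres $\mathcal{I}_0^{+}$ and $\mathcal{I}_0^{-}$ (radius $\sqrt{2}$), checking that $\{-1/2\le x\le 1/2\}$ lies inside $\mathcal{I}_0^{+}$ and $\{1/2\le x\le 3/2\}$ inside $\mathcal{I}_0^{-}$; you instead use the radius-$1$ spheres $\mathcal{I}_k^{\star}$ and $\mathcal{I}_k^{\diamond}$, whose centers share the imaginary part $i\sqrt{3}/2$ with the points of $L$. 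That choice makes the Cygan computation collapse to the exact identity $d_{\mathrm{Cyg}}(p,c)^4=(x-2k\mp\tfrac12)^4+\tfrac34$, reducing the covering claim to the transparent inequality $1/\sqrt{2}>1/2$; I verified the cancellation of the $t$-contributions and it is as you state. The paper's choice needs only two spheres but a slightly less clean estimate; yours needs infinitely many (two per period) but yields an explicit closed form with no numerics. Both are complete proofs.
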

\begin{proof}
It is obvious that $L$ is a $\mathbb{R}$-circle, since it is the image of the $x$-axis of $\mathcal{N}$ by a Heisenberg translation along the $y$-axis.
For any point $[x+i\sqrt{3}/2, \sqrt{3}x]\in L$, we have
$T([x+i\sqrt{3}/2, \sqrt{3}x])=[(x+2)+i\sqrt{3}/2, \sqrt{3}(x+2)]$ which lies in $L$.
 Thus $L$ is a $T$-invariant $\mathbb{R}$-circle. See Figure \ref{figure:fd}.

Note that $T$ acts on $L$ as a translation through $2$. To show $L$ is contained in the complement of $D$, it suffices to show that a segment with length $2$ is contained in the interior of some isometric spheres.  By considering their Cygan distance between a point in $L$ and the center of a isometric sphere,
one can compute that the segment $\{ [x+i\sqrt{3}/2, \sqrt{3}x] : -1/2 \leq x \leq 1/2 \}$ lie in the interior of $\mathcal{I}_0^{+}$ and
the segment $\{ [x+i\sqrt{3}/2, \sqrt{3}x] : 1/2 \leq x \leq 3/2 \}$ lie in the interior of $\mathcal{I}_0^{-}$.
\end{proof}

\begin{defn}
Let $\Sigma_{-1}=\{ [-3/2+iy,t]\in\mathcal{N} : y,t \in\mathbb{R} \}$ and $\Sigma_{0}=\{ [1/2+iy,t]\in\mathcal{N} : y,t \in\mathbb{R} \}$ be two planes in the Heisenberg group.
\end{defn}
In fact, the vertical planes $\Sigma_{-1}$ and $\Sigma_{0}$ are boundaries of {\it fans} in the sense of \cite{Go-P2}. Let $D_T$ be the region between $\Sigma_{-1}$ and $\Sigma_0$, that is
$$
D_T=\{ [x+iy,t]\in\mathcal{N}: -3/2\leq x\leq 1/2 \}.
$$
It is obvious that $\Sigma_0=T(\Sigma_{-1})$. Thus $D_T$ is a fundamental domain for $\langle T \rangle$ acting on $\partial\hc$.

\begin{prop}\label{prop:plane-sigma}
The intersections of $\Sigma_0$ and $\Sigma_{-1}$ with the isometric spheres $\mathcal{I}_k^{\pm}$, $\mathcal{I}_k^{\star}$ and $\mathcal{I}_k^{\diamond}$ are empty, except the following:
\begin{itemize}
  \item Each one of $\Sigma_0 \cap \mathcal{I}_0^{\pm}$ and $\Sigma_0 \cap \mathcal{I}_0^{\star}$ is a circle and $\Sigma_0 \cap \mathcal{I}_0^{\diamond}=\Sigma_0 \cap \mathcal{I}_1^{\diamond}=\{q_3\}$.
  \item Each one of $\Sigma_{-1}\cap \mathcal{I}_{-1}^{\pm}$ and $\Sigma_{-1} \cap \mathcal{I}_{-1}^{\star}$ is a circle and $\Sigma_{-1} \cap \mathcal{I}_{-1}^{\diamond}=\Sigma_{-1} \cap \mathcal{I}_0^{\diamond}=\{q_2\}$.
\end{itemize}
\end{prop}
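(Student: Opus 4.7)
The plan is to prove this by direct computation with the Cygan distance formula. First I would exploit the translation symmetry: since $\Sigma_0 = T(\Sigma_{-1})$ and $T(\mathcal{I}_k^{\bullet}) = \mathcal{I}_{k+1}^{\bullet}$ for each $\bullet \in \{+,-,\star,\diamond\}$, any intersection pattern for $\Sigma_{-1}$ is just the $T^{-1}$-image of the corresponding intersection pattern for $\Sigma_0$. So it suffices to verify all the claims for $\Sigma_0 = \{[1/2+iy,t] : y,t\in\Real\}$.

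Next I would take a generic point $p=[1/2+iy,t]\in\Sigma_0$ and, for each center $c$ of one of the isometric spheres listed in Definition~\ref{def:isometric} (specialized to $\theta=\pi/3$, so $e^{i\theta}=1/2+i\sqrt{3}/2$), write out the fourth power of the Cygan distance via (\ref{eq:cygan-metric}):
\begin{equation*}
d_{\rm{Cyg}}(p,c)^4 = \bigl(|z_p - z_c|^2\bigr)^2 + \bigl(t_p - t_c + 2{\rm Im}(z_p\overline{z_c})\bigr)^2.
\end{equation*}
For an intersection to exist, this must equal $r^4$. The key observation is that the first summand alone is bounded below by $({\rm Re}(z_p-z_c))^4 = (1/2 - {\rm Re}(z_c))^4$, so whenever $|1/2 - {\rm Re}(z_c)|^2 > r^2$ the intersection is empty. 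A glance at Table~1 at $\theta=\pi/3$ shows that this handles every $\mathcal{I}_k^{\bullet}$ except precisely the six spheres listed in the proposition.

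For the surviving cases, the equation $d_{\rm{Cyg}}(p,c)^4 = r^4$ becomes, after substitution, a relation of the form $F(y)^2 + G(y,t)^2 = r^4$ where $F$ is a polynomial in $y$ and $G$ is linear in $t$. For $\mathcal{I}_0^{+}$, $\mathcal{I}_0^{-}$, $\mathcal{I}_0^{\star}$ this $F(y)$ takes values less than $r^2$ on a bounded $y$-interval, and then $G(y,t)=\pm\sqrt{r^4-F(y)^2}$ traces out a simple closed curve, hence the intersection is topologically a circle (and ``circle'' in the Cygan-sphere sense since each $\Sigma_0 \cap \mathcal{I}$ is the level set of a Cygan-distance to the center, intersected with a vertical plane).

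The main obstacle is the two tangency cases $\Sigma_0 \cap \mathcal{I}_0^{\diamond} = \Sigma_0 \cap \mathcal{I}_1^{\diamond} = \{q_3\}$. There, $|{\rm Re}(z_p-z_c)|=1$ equals the radius; the inequality $F(y)^2 \geq r^4$ is sharp. A short computation shows $F(y) = 1 + (y-\sqrt{3}/2)^2 \geq 1 = r^2$, with equality only at $y=\sqrt{3}/2$, and then the equation $G(y,t)=0$ forces the unique solution $t=\sqrt{3}$, giving exactly the point $q_3 = [1/2+i\sqrt{3}/2,\sqrt{3}]$. This is consistent with Proposition~\ref{prop:points}, which identifies $q_3$ as the parabolic fixed point of $ST^{-1}S$ lying on both $\mathcal{I}_0^{\diamond}$ and $\mathcal{I}_1^{\diamond}$. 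Finally, applying $T^{-1}$ transports the whole picture to $\Sigma_{-1}$ and converts $q_3$ to $q_2 = T^{-1}(T^{-1}ST^{-1}ST \cdot q_2)$, completing the symmetric statement.
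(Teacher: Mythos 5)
Your proposal is correct and follows essentially the same route as the paper: reduce to $\Sigma_0$ via the relation $\Sigma_0=T(\Sigma_{-1})$ and then compute Cygan distances from points of $\Sigma_0$ to the sphere centers; in fact you make explicit both the elimination step (via the bound $(|z-w|^2)^2\geq(\mathrm{Re}(z-w))^4$, which at $\theta=\pi/3$ does kill every sphere whose center has real part farther than the radius from $1/2$) and the tangency computation at $q_3$, where the paper merely invokes strict convexity of Cygan spheres and a ``straight computation.'' Two cosmetic slips only: the surviving spheres for $\Sigma_0$ number five, not six ($\mathcal{I}_0^{\pm}$, $\mathcal{I}_0^{\star}$, $\mathcal{I}_0^{\diamond}$, $\mathcal{I}_1^{\diamond}$), and your closing formula for $q_2$ is garbled --- it should simply read $q_2=T^{-1}(q_3)$.
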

\begin{proof}
Since the isometric spheres are strictly convex, their intersections with a plane is either a topological circle, or a point or empty. Note that $\Sigma_0=T(\Sigma_{-1})$. Thus it suffices to consider the intersections of $\Sigma_0$ with the isometric spheres. By a strait computation, each one of $\Sigma_0 \cap \mathcal{I}_0^{\pm}$ and $\Sigma_0 \cap \mathcal{I}_0^{\star}$ is a circle. (See Figure \ref{figure:curves}).
\end{proof}

\begin{lem}\label{lem:curve-sigma}
The plane $\Sigma_0$ (respectively $\Sigma_{-1}$) is preserved by $I_2$ (respectively $T^{-1}I_2T$).
The intersection $\Sigma_{0} \cap \partial U$ (respectively $\Sigma_{-1} \cap \partial U$) is a simple closed curve $c_0$ (respectively $c_{-1}$)
in the union $ \tilde{s}_0^{+} \cup \tilde{s}_0^{-}$ (respectively $\tilde{s}_{-1}^{+} \cup \tilde{s}_{-1}^{-}$), which contains the points $q_3$ and $v_0=[1/2+i\sqrt{3}/2,-\sqrt{3}]$ (respectively $q_2$ and $v_{-1}=T^{-1}(v_0)$).
\end{lem}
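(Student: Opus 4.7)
The plan is to verify the $I_2$-invariance of $\Sigma_0$ by a direct matrix calculation, then to describe $\Sigma_0\cap\partial U$ as the boundary in the plane $\Sigma_0$ of the region exterior to two explicit Cygan circles. The $\Sigma_{-1}$ case is then immediate from $\Sigma_{-1}=T^{-1}(\Sigma_0)$, $v_{-1}=T^{-1}(v_0)$, and $q_2=T^{-1}(q_3)$ (see Proposition~\ref{prop:points}), noting that $T^{-1}I_2T$ preserves $T^{-1}(\Sigma_0)=\Sigma_{-1}$.

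First I check $\Sigma_0$-invariance. For $\theta=\pi/3$ the complex line $C_2$ has $\partial C_2=\{[1/2+i\sqrt{3}/2,t]:t\in\mathbb{R}\}\subset\Sigma_0$. Applying the matrix of $I_2$ to the standard lift of $[1/2+iy,t]$ and normalising the bottom entry to $1$, the middle entry becomes $1/2+i(\sqrt{3}-y)$; hence $I_2$ preserves $\Sigma_0$ and acts on it by $[1/2+iy,t]\mapsto[1/2+i(\sqrt{3}-y),t+2y-\sqrt{3}]$, with fixed locus the line $\partial C_2=\{y=\sqrt{3}/2\}$.

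Next I analyse $\Sigma_0\cap\partial U$. By Proposition~\ref{prop:plane-sigma} the intersections of $\Sigma_0$ with the spheres in $\mathcal{S}$ are three simple closed curves $C^{\pm}:=\Sigma_0\cap\mathcal{I}_0^{\pm}$, $C^{\star}:=\Sigma_0\cap\mathcal{I}_0^{\star}$, together with the isolated point $q_3$. By Proposition~\ref{prop:symmetry} the map $I_2$ swaps $C^{+}$ and $C^{-}$ and preserves $C^{\star}$. A Cygan-distance computation gives $v_0\in C^{+}\cap C^{-}$, and substituting $y=\sqrt{3}/2$ into the defining equations of $C^{\pm}$ reduces each of them to $t^2=3$, so on the fixed line one gets exactly $\{q_3,v_0\}$; an algebraic elimination confirms these are the only intersection points of $C^+$ and $C^-$ in $\Sigma_0$. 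The main obstacle is to show that $C^{\star}$ contributes nothing to $c_0$. Using the $I_2$-symmetry of $C^{\star}$, and checking that its two intersection points with the fixed line $\{y=\sqrt{3}/2\}$ lie inside both $\mathcal{I}_0^{+}$ and $\mathcal{I}_0^{-}$, one obtains that $C^{\star}$ is split by the fixed line into two arcs respectively contained in $\mathrm{int}\,\mathcal{I}_0^{+}$ and $\mathrm{int}\,\mathcal{I}_0^{-}$; this is consistent with Proposition~\ref{prop:s0star1} and Corollary~\ref{cor:intersections}, which together imply that every pairwise intersection of $\mathcal{I}_0^{\star}$ with another sphere in $\mathcal{S}$ lies in the interior of $\mathcal{I}_0^{+}\cup\mathcal{I}_0^{-}$. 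Therefore $\tilde{s}_0^{\star}\cap\Sigma_0=\emptyset$.

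Once these ingredients are in place, $\Sigma_0\cap U$ is the unbounded component of $\Sigma_0\setminus(C^{+}\cup C^{-})$, and its topological boundary $c_0$ is a single simple closed curve consisting of one arc of $C^{+}$ and one arc of $C^{-}$ meeting at $q_3$ and $v_0$. This gives $c_0\subset\tilde{s}_0^{+}\cup\tilde{s}_0^{-}$ passing through the claimed points, and applying $T^{-1}$ yields the analogous statement for $c_{-1}\subset\Sigma_{-1}$.
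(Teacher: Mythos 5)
Your proposal is correct and follows essentially the same route as the paper: identify $\Sigma_0$ as the $I_2$-invariant plane through the fixed $\mathbb{C}$-circle, compute the circles $\Sigma_0\cap\mathcal{I}_0^{\pm}$ and their two intersection points $q_3$ and $v_0$, observe that $\Sigma_0\cap\mathcal{I}_0^{\star}$ is absorbed by the interiors of $\mathcal{I}_0^{+}$ and $\mathcal{I}_0^{-}$, and assemble $c_0$ from the two exterior arcs, transporting everything to $\Sigma_{-1}$ by $T^{-1}$. Your explicit matrix verification of the $I_2$-action on $\Sigma_0$ and your use of the $I_2$-symmetry to dispose of $C^{\star}$ are minor elaborations of the paper's (terser, computation-by-assertion) argument.
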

\begin{proof}
It suffices to consider $\Sigma_{0}$. The $\mathbb{C}$-circle associated to $I_2$, that is the ideal boundary of the complex line fixed by $I_2$, is $\{[1/2+i\sqrt{3}/2,t]\in \mathcal{N} : t\in\mathbb{R} \}$, which
is contained in $\Sigma_{0}$. Thus $\Sigma_0$ is preserved by $I_2$.

It is obvious that $ \Sigma_{0}$ contains $q_3$, which is the tangent point of $\mathcal{I}_{0}^{\diamond}$ and $\mathcal{I}_{1}^{\diamond}$.
The intersections $\Sigma_{0} \cap \mathcal{I}_{0}^{+}$, $\Sigma_{0} \cap \mathcal{I}_{0}^{-}$ and $\Sigma_{0} \cap \mathcal{I}_{0}^{\star}$ are circles by Proposition \ref{prop:plane-sigma}. One can compute that the intersection $\Sigma_{0} \cap \mathcal{I}_{0}^{+} \cap \mathcal{I}_{0}^{-}$ contain two points $q_3$ and $v_0=[1/2+i\sqrt{3}/2,-\sqrt{3}]$. See Figure \ref{figure:curves}. These two points divide the circles on $\mathcal{I}_{0}^{+}$ and $\mathcal{I}_{0}^{-}$ into two arcs.
Let $c_0^{+}$ be the arc with endpoints $q_3$ and $v_0$ on $\mathcal{I}_{0}^{+}$ lying in the exterior of $\mathcal{I}_{0}^{-}$ and $c_0^{-}$ be the one on $\mathcal{I}_{0}^{-}$ lying in the exterior of $\mathcal{I}_{0}^{+}$. Then $c_0=c_0^{+}\cup c_0^{-}$ is a simple closed curve.
Observe that $\Sigma_{0} \cap \mathcal{I}_{0}^{\star}$ lie in the union of the interiors of $\mathcal{I}_{0}^{+}$ and $\mathcal{I}_{0}^{-}$.
Thus $\Sigma_0$ does not intersect $\tilde{s}_0^{\star}$.
By Proposition \ref{prop:plane-sigma}, $c_0^{+}$ lie on $\tilde{s}_0^{+}$ and $c_0^{-}$ lie on $\tilde{s}_0^{-}$.
Therefore, the intersection $\Sigma_{0} \cap \partial U$  is $c_0$, which is a simple closed curve containing $q_3$ and $v_0$.
\end{proof}

\begin{figure}
\begin{center}
\begin{tikzpicture}
\node at (0,0) {\includegraphics[width=5cm,height=5cm]{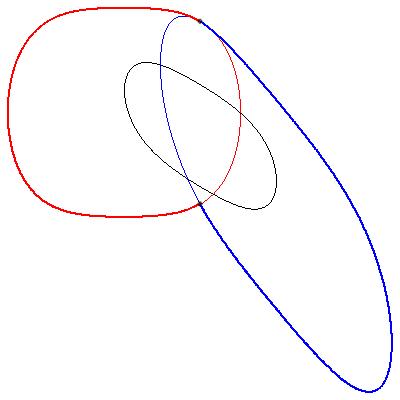}};
\coordinate [label=left:$\Sigma_{0} \cap \mathcal{I}_{0}^{+}$] (S) at (-2.5,1);
\coordinate [label=right:$\Sigma_{0} \cap \mathcal{I}_{0}^{-}$] (S) at (2.4,-1.8);
\coordinate [label=right:$\Sigma_{0} \cap \mathcal{I}_{0}^{\star}$] (S) at (0.2,-0.3);
\coordinate [label=left:$c_{0}^{+}$] (S) at (-2,0);
\coordinate [label=right:$c_{0}^{-}$] (S) at (2,0);
\coordinate [label=above:$q_3$] (q3) at (0,2.2);
\coordinate [label=below:$v_0$] (v1) at (0,-0.2);
\end{tikzpicture}
\end{center}
\caption{The intersection of $\Sigma_{0}$ with $\mathcal{I}_{0}^{+}$, $\mathcal{I}_{0}^{-}$ and $\mathcal{I}_{0}^{\star}$. Viewed in $\Sigma_{0}$. $c_0=c_0^{+} \cup c_0^{-}$ is a simple closed curve, where $c_0^{+}$ and $c_0^{-}$ are the thicker parts of $\Sigma_{0} \cap \mathcal{I}_{0}^{+}$ and $\Sigma_{0} \cap \mathcal{I}_{0}^{-}$, respectively. }
  \label{figure:curves}
\end{figure}

Let $U^{c}$ be closure of the complement of $U$ in $\mathcal{N}$.

\begin{prop}\label{prop:solid-tube}
The closure of the intersection $U^{c}\cap D_T$ is a solid tube homeomorphic to a 3-ball.
\end{prop}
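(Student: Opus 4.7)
The plan is to realize $\overline{U^{c}\cap D_T}$ as a topological regular neighbourhood in $\overline{D_T}$ of the arc $L\cap \overline{D_T}$, by showing that its boundary is a $2$-sphere and its interior is then a $3$-ball. First, I would note that only finitely many isometric spheres have interiors meeting the slab $\overline{D_T}$: inspecting the centres tabulated in Section~\ref{sec:ford} one reads off that the relevant spheres are $\mathcal{I}_{-1}^{\pm}$, $\mathcal{I}_{0}^{\pm}$, $\mathcal{I}_{-1}^{\star}$, $\mathcal{I}_{0}^{\star}$, together with the $\diamond$-spheres meeting $\Sigma_{-1}\cup\Sigma_{0}$ at the cusp points $q_2,q_3$. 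Their pairwise combinatorial arrangement is already described by Corollary~\ref{cor:intersections}.

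Next I would analyse the boundary of $\overline{U^{c}\cap D_T}$, which splits into two types of faces. The \emph{end-caps} come from the planes $\Sigma_{j}$, $j\in\{-1,0\}$: by Proposition~\ref{prop:plane-sigma} and Lemma~\ref{lem:curve-sigma}, $\Sigma_{j}\cap \partial U$ is the single simple closed curve $c_{j}$, lying in $\tilde{s}_{j}^{+}\cup\tilde{s}_{j}^{-}$ and containing a point of $L$ in the interior of $\Sigma_j$. By the Jordan curve theorem on $\Sigma_{j}$ it bounds a topological disc $\Delta_{j}\subset \Sigma_{j}$, and Proposition~\ref{prop:r-circle} forces $\Delta_{j}=\Sigma_{j}\cap U^{c}$. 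The \emph{lateral faces} are the traces on $\overline{D_T}$ of the sides $\tilde{s}_{k}^{\bullet}$ meeting the slab; using Propositions~\ref{prop:side-plus}--\ref{prop:side-diamond} and the equivariance $T(\Sigma_{-1})=\Sigma_{0}$, one can check face-by-face that these traces, cut along $c_{-1}$ and $c_{0}$, patch together into a single topological cylinder $S^{1}\times [0,1]$ with boundary circles $c_{-1}$ and $c_{0}$.

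Gluing the end-caps $\Delta_{-1}$, $\Delta_{0}$ to this cylinder produces a topological $2$-sphere bounding $\overline{U^{c}\cap D_T}$ in $\overline{D_T}\subset \mathcal{N}\cong \mathbb{R}^{3}$. Since $\overline{U^{c}\cap D_T}$ is a compact connected subset of $\mathcal{N}$ with $2$-sphere boundary, the Schoenflies theorem (or a direct deformation retraction onto the arc $L\cap\overline{D_T}$, obtained by contracting each Cygan ball radially onto its spine and then sliding onto $L$) yields the desired homeomorphism with a $3$-ball; this is the ``solid tube'' of the statement, having $L\cap \overline{D_T}$ as its core.

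The main obstacle is the combinatorial bookkeeping in the middle step: verifying that the traces of the lateral sides on $\overline{D_T}$ genuinely assemble into a single cylinder and not a more complicated surface, and that the cuts along $c_{-1}$ and $c_{0}$ hit each side $\tilde{s}_{j}^{\pm}$ in exactly the way dictated by Propositions~\ref{prop:side-plus} and \ref{prop:side-minus}. Once this combinatorial picture is pinned down (and a direct comparison with Figure~\ref{figure:cylinder} confirms it), the topological conclusion follows immediately.
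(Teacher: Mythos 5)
Your proposal is correct and takes essentially the same route as the paper: both read off the cell structure of $\partial\left(\overline{U^{c}\cap D_T}\right)$ from Propositions \ref{prop:side-plus}--\ref{prop:side-diamond} and Lemma \ref{lem:curve-sigma}, certify that this boundary is a $2$-sphere, and then conclude that the region is a $3$-ball. The only difference is in how the sphere is certified: the paper counts $13$ vertices, $23$ edges and $12$ faces to get Euler characteristic $2$, whereas you decompose the boundary into a lateral annulus capped off by the two discs $\Delta_{-1},\Delta_{0}$ --- the ``combinatorial bookkeeping'' you defer is exactly what that cell count accomplishes.
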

\begin{proof}
It suffices to show that the boundary of $U^{c}\cap D_T$ is a 2-sphere. Now let us consider the cell structure of $U^{c}\cap D_T$. See Figure \ref{figure:cylinder}.
According to Lemma \ref{lem:curve-sigma}, the intersection of $U^{c}$ with $\Sigma_0$ (resp. $\Sigma_{-1}$) is a topological disc with two vertices $q_3$ and $v_0$ (resp. $q_2$ and $v_{-1}$)and two edges $c_0^{\pm}$ (resp. $c_{-1}^{\pm}$). Moreover, $c_0$ (resp. $c_{-1}$) divides $\mathcal{O}_0^{\pm}$ (resp. $\mathcal{O}_{-1}^{\pm}$)into a quadrilateral $\mathcal{Q'}_0^{\pm}$ (resp. $\mathcal{Q'}_{-1}^{\pm}$) and a heptagon $\mathcal{H}_{0}^{\pm}$ (resp. $\mathcal{H}_{-1}^{\pm}$).

Since $p_2$, $p_5$ and $T^{-1}(p_4)$ are contained in $D_T$, one can see that $D_T$ contains $\mathcal{Q'}_0^{-}$, $\mathcal{Q'}_{-1}^{+}$, $\mathcal{H}_{0}^{+}$ and $\mathcal{H}_{-1}^{-}$.
Besides, $D_T$ contains $\mathcal{Q}_0^{+}$, $\mathcal{Q}_{-1}^{-}$, $(\mathcal{T}_1)_0^{\diamond}$, $(\mathcal{T}_2)_0^{\diamond}$, $(\mathcal{T}_1)_0^{\star}$ and $(\mathcal{T}_2)_{-1}^{\star}$.
Thus the boundary of $U^{c}\cap D_T$ consists of 12 faces, 23 edges and 13 vertices. See the region between $c_0$ and $c_{-1}$ in Figure \ref{figure:cylinder}.
Therefore the Euler characteristic of the boundary of $U^{c}\cap D_T$ is 2. So the boundary of $U^{c}\cap D_T$ is a 2-sphere.
\end{proof}

Proposition \ref{prop:r-circle} and Proposition \ref{prop:solid-tube} imply the following result.
\begin{prop}\label{prop:cylinder}
$U \cap D_T$ is an unknotted cylinder cross a ray homeomorphic to $S^{1}\times [0,1]\times \mathbb{R}_{\geq0}$.
\end{prop}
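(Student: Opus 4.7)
The plan is to identify $U\cap D_T$ with the complement of a standardly embedded solid tube in a topological slab. First, I view $D_T \subset \mathcal{N} \cong \mathbb{C}\times\mathbb{R}$ as the topological slab $[-3/2,1/2]\times\mathbb{R}^{2}$, under which $\Sigma_{-1}$ and $\Sigma_{0}$ become the two walls $\{-3/2\}\times\mathbb{R}^{2}$ and $\{1/2\}\times\mathbb{R}^{2}$.

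Next, set $B := U^{c}\cap D_T$. By Proposition \ref{prop:solid-tube}, $B$ is a closed $3$-ball. By Lemma \ref{lem:curve-sigma}, the intersections $B\cap \Sigma_{0}$ and $B\cap \Sigma_{-1}$ are topological discs, bounded respectively by the simple closed curves $c_{0}$ and $c_{-1}$. Since $\partial B$ is a $2$-sphere, it decomposes as the union of these two wall-discs together with an annulus $A\subset \partial U$ joining $c_{-1}$ to $c_{0}$.

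Now I would argue that the pair $(D_T, B)$ is homeomorphic to the standard pair $\bigl([-3/2,1/2]\times \mathbb{R}^{2},\,[-3/2,1/2]\times D^{2}\bigr)$, where $D^{2}$ is a closed disc in $\mathbb{R}^{2}$. This is the key ``unknotting'' step, and it is justified by producing an explicit spanning arc of $B$: by Proposition \ref{prop:r-circle}, $L\cap D_T$ is a topological arc contained in $B$ whose two endpoints lie on the two walls and whose projection to the first coordinate is a homeomorphism onto $[-3/2,1/2]$. A $3$-ball in a slab whose boundary meets each wall in a single disc and the interior in a single annulus, and which contains such a spanning arc, can be ambiently isotoped in $D_T$ to the standard cylindrical neighborhood of that arc by classical $3$-manifold arguments.

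Finally, with this identification in hand,
\[
U\cap D_T \;=\; \overline{D_T \setminus B} \;\cong\; [-3/2,1/2]\times\left(\mathbb{R}^{2}\setminus \mathrm{int}(D^{2})\right) \;\cong\; [0,1]\times S^{1}\times\mathbb{R}_{\geq 0} \;\cong\; S^{1}\times [0,1]\times\mathbb{R}_{\geq 0},
\]
using the standard homeomorphism $\mathbb{R}^{2}\setminus \mathrm{int}(D^{2}) \cong S^{1}\times\mathbb{R}_{\geq 0}$. The main technical point is the ambient straightening of $B$ inside $D_T$: all combinatorial information about $\partial B$ is supplied by Lemma \ref{lem:curve-sigma} and Proposition \ref{prop:solid-tube}, while the spanning arc needed to rule out nontrivial knotting of $B$ is supplied by Proposition \ref{prop:r-circle}.
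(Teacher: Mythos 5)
Your proposal is correct and follows essentially the same route as the paper: both realize $U\cap D_T$ as the complement in the slab $D_T$ of the $3$-ball $B=U^{c}\cap D_T$ from Proposition \ref{prop:solid-tube}, and both use the straight spanning arc $L\cap D_T$ of Proposition \ref{prop:r-circle} to conclude that this tube is unknotted, so that its complement is $S^{1}\times[0,1]\times\mathbb{R}_{\geq 0}$. Your write-up is in fact a bit more explicit than the paper's (which simply asserts that $U^{c}\cap D_T$ is a tubular neighborhood of $L\cap D_T$ and ``cannot be knotted'') in spelling out the standard-pair identification and where the unknotting input enters.
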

\begin{proof}
As stated in Proposition \ref{prop:r-circle}, $U^{c}$ contains the line $L$. Thus $U^{c}\cap D_T$ is a tubular neighborhood of $L \cap D_T$. It cannot be knotted.
Therefore $\partial U \cap D_T$ is un unknotted cylinder homeomorphic to $S^{1}\times [0,1]$.
One can see that $U\cap\Sigma_0$ is $c_0$ cross a ray and $U\cap\Sigma_{-1}$ is $c_{-1}$ cross a ray. Both of them are homeomorphic to $S^{1}\times\mathbb{R}_{\geq 0}$. Hence $U \cap D_T$ is an unknotted cylinder cross a ray homeomorphic to $S^{1}\times [0,1]\times \mathbb{R}_{\geq0}$.
\end{proof}

Applying the powers of $T$, Proposition \ref{prop:cylinder} immediately implies the following corollary.
\begin{cor}
$U$ is an unknotted cylinder cross a ray homeomorphic to $S^{1}\times \mathbb{R}\times \mathbb{R}_{\geq0}$.
\end{cor}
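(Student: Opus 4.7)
The plan is to assemble $U$ from its $\langle T\rangle$-translates of $U\cap D_T$, each of which is an unknotted cylinder cross a ray by the preceding proposition. Since the Ford domain $D$ is preserved by $\langle T\rangle$ and $D_T$ is a fundamental domain for $\langle T\rangle$ acting on $\partial\hc$, we have
\[
U=\bigcup_{k\in\mathbb{Z}}T^k(U\cap D_T),
\]
and neighboring pieces are glued along the vertical planes $\Sigma_k=T^k(\Sigma_0)$, since $T(\Sigma_{-1})=\Sigma_0$.

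First I would track what happens on a single gluing plane. By Lemma \ref{lem:curve-sigma}, $\Sigma_0\cap\partial U$ is the simple closed curve $c_0\subset\tilde{s}_0^+\cup\tilde{s}_0^-$, and $\Sigma_{-1}\cap\partial U$ is $c_{-1}$. Since $T$ sends $\Sigma_{-1}$ to $\Sigma_0$ and preserves both $U$ and its complement, it carries $c_{-1}$ homeomorphically onto $c_0$. Therefore the component of $\Sigma_0\cap U$ sitting on the right-hand wall of $U\cap D_T$ coincides with the component of $\Sigma_0\cap U$ sitting on the left-hand wall of $T(U\cap D_T)$, and the identification matches the $S^1$-factor of one cylinder with the $S^1$-factor of the next in an orientation-preserving manner. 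So gluing reduces to stacking copies of $S^1\times[0,1]\times\mathbb{R}_{\geq 0}$ end-to-end along the boundary circles $c_k\times\mathbb{R}_{\geq 0}$.

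Next I would conclude that an infinite end-to-end gluing of $S^1\times[0,1]\times\mathbb{R}_{\geq 0}$ along common $S^1\times\mathbb{R}_{\geq 0}$ faces, indexed by $k\in\mathbb{Z}$, is homeomorphic to $S^1\times\mathbb{R}\times\mathbb{R}_{\geq 0}$. This is elementary once one notes that the gluing maps all preserve the product structure. For unknottedness, I would appeal to Proposition \ref{prop:r-circle}: the complement $U^c\cup\{q_\infty\}$ in $\partial\hc\cong S^3$ contains the topological circle $L\cup\{q_\infty\}$, which is an $\mathbb{R}$-circle (the boundary of a Lagrangian plane) and hence unknotted in $S^3$. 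Since $U^c\cap D_T$ is a 3-ball tubular neighborhood of $L\cap D_T$ (by Proposition \ref{prop:solid-tube}), the union $\bigcup_k T^k(U^c\cap D_T)\cup\{q_\infty\}$ is a solid torus neighborhood of the unknot $L\cup\{q_\infty\}$, so its boundary cylinder $\partial U$ is an unknotted torus in $S^3$ minus a point.

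The main obstacle is purely bookkeeping: verifying that the $T$-identification of $c_{-1}$ with $c_0$ is orientation-consistent, so that the assembled object is $S^1\times\mathbb{R}\times\mathbb{R}_{\geq 0}$ rather than a twisted line bundle over $S^1$. This is immediate because $T$ acts by translation along the $T$-invariant $\mathbb{R}$-circle $L$ (preserving every orientation in sight), so no twisting occurs.
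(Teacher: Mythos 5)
Your proposal is correct and follows essentially the same route as the paper, which offers no written proof beyond the remark that the corollary follows ``by applying the powers of $T$'' to Proposition \ref{prop:cylinder}; you simply make explicit the decomposition $U=\bigcup_{k}T^k(U\cap D_T)$, the gluing along the walls $T^k(\Sigma_0)\cap U\cong S^1\times\mathbb{R}_{\geq 0}$, and the unknottedness via the $T$-invariant $\mathbb{R}$-circle $L$. (One minor remark: for a bi-infinite stack indexed by $\mathbb{Z}$ the orientation-consistency of the gluings is automatic up to homeomorphism, so the final paragraph is not actually needed, though it is harmless.)
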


\begin{rem}
$U$ is the complement of a tubular neighborhood of the $T$-invariant $\mathbb{R}$-circle $L$. That is a horotube for $T$ (See \cite{schwartz-cr} for the definition of horotube).
\end{rem}

\begin{defn}
Suppose that the cylinder $S^{1}\times [0,1]$ has a combinatorial cell structure with finite faces $\{F_i\}$.
A \emph{canonical subdivision} on $S^{1}\times [0,1] \times \mathbb{R}_{\geq0}$ is a finite union of 3-dimensional pieces $\{\widehat{F_i}\}$ where $\widehat{F_i}=F_{i}\times \mathbb{R}_{\geq 0}$.
\end{defn}

\begin{prop}\label{prop:subdivision}
There is a canonical subdivision on $U \cap D_T$.
\end{prop}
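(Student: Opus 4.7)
The plan is to combine the product decomposition of Proposition \ref{prop:cylinder} with the explicit finite cell structure on the boundary cylinder already implicit in the proof of Proposition \ref{prop:solid-tube}. Concretely, I would first read off the combinatorial cell decomposition of $\partial U \cap D_T \cong S^{1}\times[0,1]$: the $2$--cells are the pieces of the sides $\tilde{s}_k^{\pm},\tilde{s}_k^{\star},\tilde{s}_k^{\diamond}$ lying in the slab $D_T$, with the octagons $\mathcal{O}_{-1}^{\pm}$ and $\mathcal{O}_0^{\pm}$ further cut by the curves $c_{-1}$ and $c_0$ from Lemma \ref{lem:curve-sigma}. The Euler-characteristic count in Proposition \ref{prop:solid-tube} already establishes that there are finitely many faces (twelve), edges and vertices, and that each face is a topological disc; see Figure \ref{figure:cylinder}. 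I would simply enumerate and label them as $\{F_i\}_{i=1}^{12}$ for the statement of the subdivision.

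Next I would use Proposition \ref{prop:cylinder} to fix a homeomorphism
$$\Phi : U\cap D_T \longrightarrow S^{1}\times[0,1]\times\mathbb{R}_{\geq 0},$$
where the boundary cylinder $S^{1}\times[0,1]\times\{0\}$ corresponds to $\partial U\cap D_T$ with the cell structure above, and the $\mathbb{R}_{\geq 0}$ factor corresponds to the direction toward $q_\infty$ along the horotube. For each $2$--cell $F_i$ of $\partial U \cap D_T$ I would then set
$$\widehat{F_i} \;=\; \Phi^{-1}\bigl(\Phi(F_i)\times \mathbb{R}_{\geq 0}\bigr),$$
so that $\widehat{F_i}$ is a $3$--dimensional piece of $U\cap D_T$ with $F_i$ as its only boundary component in $\partial U\cap D_T$. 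By construction $\{\widehat{F_i}\}$ is finite and $\bigcup_i \widehat{F_i}=U\cap D_T$, which is exactly the canonical subdivision required by the definition.

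The only genuinely non-trivial step is ensuring that $\Phi$ respects the product decomposition globally, and this is where I would spend most of the care. The idea is to build $\Phi$ compatibly with the natural ``radial'' foliation of $U\cap D_T$: the interior of $U\cap D_T$ is foliated by horospheres based at $q_\infty$ (equivalently, by the level sets of the height coordinate in horospherical coordinates on $\partial\hc\setminus\{q_\infty\}$), and each such level set intersected with $D_T\cap U$ is an annular cylinder carrying the same combinatorial pattern as $\partial U \cap D_T$. The inclusions across horospheres are compatible up to isotopy by strict convexity of the bounding Cygan spheres, so $\Phi$ may be chosen so that $\Phi(F_i)\times\mathbb{R}_{\geq 0}$ is exactly the union of the corresponding regions on all horospheres. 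Once this is in place, the subdivision $\{\widehat{F_i}\}$ is well-defined, and the proposition follows.
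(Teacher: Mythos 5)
Your proposal is essentially the paper's own argument: take the finite cell structure on the boundary cylinder $\partial U\cap D_T$ coming from the proof of Proposition \ref{prop:solid-tube} and use the product structure of Proposition \ref{prop:cylinder} to form the pieces $\widehat{F_i}=F_i\times\mathbb{R}_{\geq 0}$, which the paper describes as cones from $q_{\infty}$ over the faces. One small correction: the count of twelve faces in Proposition \ref{prop:solid-tube} is for the $2$--sphere bounding $U^{c}\cap D_T$ and includes the two end discs in $\Sigma_{-1}$ and $\Sigma_{0}$; the cylinder $\partial U\cap D_T$ itself carries only the ten faces $\mathcal{Q'}_0^{-}$, $\mathcal{Q'}_{-1}^{+}$, $\mathcal{H}_{0}^{+}$, $\mathcal{H}_{-1}^{-}$, $\mathcal{Q}_0^{+}$, $\mathcal{Q}_{-1}^{-}$, $(\mathcal{T}_1)_0^{\diamond}$, $(\mathcal{T}_2)_0^{\diamond}$, $(\mathcal{T}_1)_0^{\star}$, $(\mathcal{T}_2)_{-1}^{\star}$, and these are the $F_i$ to which the subdivision applies.
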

\begin{proof}
As described in the proof of Proposition \ref{prop:solid-tube}, the combinatorial cell structure of $\partial U \cap D_T$ has 10 faces $\mathcal{Q'}_0^{-}$, $\mathcal{Q'}_{-1}^{+}$, $\mathcal{H}_{0}^{+}$, $\mathcal{H}_{-1}^{-}$, $\mathcal{Q}_0^{+}$, $\mathcal{Q}_{-1}^{-}$, $(\mathcal{T}_1)_0^{\diamond}$, $(\mathcal{T}_2)_0^{\diamond}$, $(\mathcal{T}_1)_0^{\star}$ and $(\mathcal{T}_2)_{-1}^{\star}$. By Proposition \ref{prop:cylinder}, $U \cap D_T$ is the union of 3-dimensional pieces $\widehat{\mathcal{Q'}_0^{-}}$, $\widehat{\mathcal{Q'}_{-1}^{+}}$, $\widehat{\mathcal{H}_{0}^{+}}$, $\widehat{\mathcal{H}_{-1}^{-}}$, $\widehat{\mathcal{Q}_0^{+}}$, $\widehat{\mathcal{Q}_{-1}^{-}}$, $\widehat{(\mathcal{T}_1)_0^{\diamond}}$, $\widehat{(\mathcal{T}_2)_0^{\diamond}}$, $\widehat{(\mathcal{T}_1)_0^{\star}}$ and $\widehat{(\mathcal{T}_2)_{-1}^{\star}}$. Combinatorially, these 3-dimensional pieces are the cone from $q_{\infty}$ to the faces of $\partial U \cap D_T$.
\end{proof}

Let $\Omega(\Gamma)$ be the discontinuity region of $\Gamma$ acting on $\partial\hc$. Then $U \cap D_T$ is obvious a fundamental domain for $\Gamma$. By cutting and gluing, we can obtain the following fundamental domain for $\Gamma$ acting on $\Omega(\Gamma)$.

\begin{prop}\label{prop:polyhedra}
Let $\mathcal{P}_{+}$ be the union of $\widehat{\mathcal{H}_{0}^{+}}$, $\widehat{(\mathcal{T}_1)_0^{\diamond}}$, $\widehat{(\mathcal{T}_2)_0^{\diamond}}$, $T(\widehat{\mathcal{Q'}_{-1}^{+}})$, $T(\widehat{\mathcal{Q}_{-1}^{-}})$ and $T(\widehat{(\mathcal{T}_2)_{-1}^{\star}})$.
Let $\mathcal{P}_{-}$ be the union of $\widehat{\mathcal{Q}_0^{+}}$, $\widehat{(\mathcal{T}_1)_0^{\star}}$, $\widehat{\mathcal{Q'}_0^{-}}$ and $T(\widehat{\mathcal{H}_{-1}^{-}})$.
Then $\mathcal{P}_{+} \cup \mathcal{P}_{-}$ is a fundamental domain  for $\Gamma$ acting on $\Omega(\Gamma)$.
Moreover, $\mathcal{P}_{+}$ (resp. $\mathcal{P}_{-}$) is combinatorially an eleven pyramid (resp. nine pyramid) with cone vertex $q_{\infty}$ and base $\mathcal{O}_0^{+} \cup \mathcal{Q}_0^{-} \cup (\mathcal{T}_1)_0^{\diamond} \cup (\mathcal{T}_2)_0^{\diamond} \cup (\mathcal{T}_{2})_0^{\star}$ (resp. $\mathcal{O}_0^{-} \cup \mathcal{Q}_0^{+} \cup (\mathcal{T}_{1})_0^{\star}$).
\end{prop}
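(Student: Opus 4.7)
The plan is to exhibit $\mathcal{P}_{+} \cup \mathcal{P}_{-}$ as a $T$-rearrangement of the fundamental domain $U \cap D_T$ for $\Gamma$ acting on $\Omega(\Gamma)$. By Proposition \ref{prop:subdivision}, $U \cap D_T$ has a canonical subdivision into ten cone pieces $\widehat{F}$ with apex $q_{\infty}$, one for each face $F$ of $\partial U \cap D_T$. Since $T \in \Gamma$, replacing any subcollection of these pieces by their $T$-images produces another fundamental domain for $\Gamma$ on $\Omega(\Gamma)$, provided the translates fit consistently with the stationary pieces across the $\langle T \rangle$-boundary $\partial D_T = \Sigma_{-1} \cup \Sigma_{0}$. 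I will apply $T$ to exactly the four ``left'' pieces $\widehat{\mathcal{Q'}_{-1}^{+}}$, $\widehat{\mathcal{Q}_{-1}^{-}}$, $\widehat{(\mathcal{T}_2)_{-1}^{\star}}$ and $\widehat{\mathcal{H}_{-1}^{-}}$ sitting against $\Sigma_{-1}$. The equalities $T(\Sigma_{-1}) = \Sigma_{0}$ and $T(c_{-1}) = c_{0}$ (Lemma \ref{lem:curve-sigma}), together with the equivariance $\mathcal{I}_k^{\bullet} = T^k \mathcal{I}_0^{\bullet}$, show that these translates fit flush against $\Sigma_{0}$ from outside $D_T$ and land on the faces $\mathcal{Q'}_0^{+}$, $\mathcal{Q}_0^{-}$, $(\mathcal{T}_2)_0^{\star}$ and $\mathcal{H}_0^{-}$ respectively. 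Regrouping the ten resulting cone pieces yields precisely $\mathcal{P}_{+}$ and $\mathcal{P}_{-}$ as listed.

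For the combinatorial type, I will assemble the base of each polyhedron from its constituent faces using the vertex data in Proposition \ref{prop:points} and Propositions \ref{prop:side-plus}--\ref{prop:side-diamond}. For $\mathcal{P}_{+}$, start from the octagon $\mathcal{O}_0^{+}$ with cyclic vertices $p_2, p_6, p_4, p_7, q_3, p_8, p_{11}, p_9$, and successively attach: $\mathcal{Q}_0^{-}$ along the common edge $[p_7, q_3]$; the triangle $(\mathcal{T}_2)_0^{\star}$ along the two boundary edges $[p_4, p_7]$ and $[p_7, p_3]$, so that $p_7$ passes into the interior; the triangle $(\mathcal{T}_1)_0^{\diamond}$ along $[p_9, p_{11}]$; and the triangle $(\mathcal{T}_2)_0^{\diamond}$ along $[q_3, p_8]$. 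A direct count gives the resulting boundary as an $11$-gon with cyclic vertices $p_2, p_6, p_4, p_3, p_{12}, q_3, p_{10}, p_8, p_{11}, q_2, p_9$. An analogous assembly for $\mathcal{P}_{-}$ (glue $\mathcal{Q}_0^{+}$ to $\mathcal{O}_0^{-}$ along $[q_3, p_5]$, then $(\mathcal{T}_1)_0^{\star}$ along the pair $[p_2, p_5]$, $[p_5, p_6]$, absorbing $p_5$) produces a $9$-gon base with cyclic vertices $p_6, p_4, p_3, p_{15}, p_{13}, p_{14}, q_3, p_{10}, p_2$.

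Since each of the ten cone pieces is by construction a cone from $q_{\infty}$ over its base face, adjacent pieces in $\mathcal{P}_{\pm}$ meet only along cones over shared base edges. Consequently $\mathcal{P}_{\pm}$ is combinatorially the cone from $q_{\infty}$ over its base polygon, which gives an eleven-pyramid and a nine-pyramid, respectively. The main obstacle I anticipate is verifying the edge-matchings in the base assembly above: one must confirm that the pairs $\{[p_4,p_7],[p_7,p_3]\}$ and $\{[p_2,p_5],[p_5,p_6]\}$ really are traversed in opposite orientations by the triangle and the surrounding polygon, so that $p_7$ and $p_5$ drop into the interior rather than remaining boundary vertices. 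This reduces to a direct consistency check between the triangle vertex lists of Proposition \ref{prop:side-star} and the octagon/quadrilateral vertex lists of Propositions \ref{prop:side-plus} and \ref{prop:side-minus}, which is combinatorially routine once the explicit coordinates from Definition \ref{def:points} and Proposition \ref{prop:points} are in hand.
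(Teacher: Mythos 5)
Your proposal is correct and follows essentially the same route as the paper: both arguments realize $\mathcal{P}_{+}\cup\mathcal{P}_{-}$ by cutting $U\cap D_T$ along its canonical subdivision and translating the pieces abutting $\Sigma_{-1}$ by $T$ (using $\Sigma_0=T(\Sigma_{-1})$ and $c_0=T(c_{-1})$), then reading off the base polygons from the face and vertex data. Your explicit edge-matching count for the hendecagon and enneagon is more detailed than the paper's appeal to Figures \ref{figure:cylinder} and \ref{figure:poly0}, but it reaches the same conclusion.
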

\begin{proof}
Since $\Sigma_0=T(\Sigma_{-1})$ and $c_0=T(c_{-1})$, $U\cap D_T$ and $T(U\cap D_T)$ can be glued together along $c_0\times\mathbb{R}_{\geq 0}$.
Note that $U \cap D_T$ is a fundamental domain  for $\Gamma$ acting on $\Omega(\Gamma)$ and has a subdivision as described in Proposition \ref{prop:subdivision}.
Therefore $\mathcal{P}_{+} \cup \mathcal{P}_{-}$ is also a fundamental domain.

As described in Proposition \ref{prop:solid-tube}, $c_0$ (resp. $c_{-1}$) divides $\mathcal{O}_0^{\pm}$ (resp. $\mathcal{O}_{-1}^{\pm}$)into a quadrilateral $\mathcal{Q'}_0^{\pm}$ (resp. $\mathcal{Q'}_{-1}^{\pm}$) and a heptagon $\mathcal{H}_{0}^{\pm}$ (resp. $\mathcal{H}_{-1}^{\pm}$).
Note that $\mathcal{O}_0^{\pm}=T(\mathcal{O}_{-1}^{\pm})$ and $(\mathcal{T}_2)_0^{\star}=T((\mathcal{T}_2)_{-1}^{\star})$.
Thus the base of $\mathcal{P}_{+}$ (resp. $\mathcal{P}_{-}$) is $\mathcal{O}_0^{+} \cup \mathcal{Q}_0^{-} \cup (\mathcal{T}_1)_0^{\diamond} \cup (\mathcal{T}_2)_0^{\diamond} \cup (\mathcal{T}_{2})_0^{\star}$ (resp. $\mathcal{O}_0^{-} \cup \mathcal{Q}_0^{+} \cup (\mathcal{T}_{1})_0^{\star}$) which is combinatorially a hendecagon (resp. an enneagon).
See Figure \ref{figure:cylinder} and Figure \ref{figure:poly0}.
\end{proof}

\begin{figure}
\begin{center}
\begin{tikzpicture}[scale=0.4]
\tikzstyle{every node}=[font=\large,scale=1.0]

\path (0,6) coordinate (p11);
\path (3,6) coordinate (p8);
\path (4,4) coordinate (q3);
\path (3,0) coordinate (p7);
\path (0,-3) coordinate (p4);
\path (-3,-3) coordinate (p6);
\path (-4,0) coordinate (p2);
\path (-3,3) coordinate (p9);
\path (-6.3,-6) coordinate (p5);
\path (-7,-9) coordinate (q33);
\path (-6,-12) coordinate (p14);
\path (-3,-12) coordinate (p13);
\path (0,-9) coordinate (p15);
\path (0.6,-5) coordinate (p3);

\path (-2.2,5.5) coordinate (q2);
\path (5,5) coordinate (p10);
\path (6,1) coordinate (p12);
\path (-9,-5) coordinate (p10p);
\path [fill=yellow] (p2)-- (p5)--(p6);
\path [fill=yellow] (p2)-- (p10p)--(q33);
\path [fill=yellow] (p6)--(p5)--(q33)--(p14)--(p13)--(p15)--(p3)--(p4);

\draw [dashed,ultra thick](p9)--(p11)--(p8) -- (q3) -- (p7) -- (p4) -- (p6)--(p2)--cycle;
\draw [dashed,ultra thick](p9) -- (q2) -- (p11);
\draw [dashed,ultra thick](p10) -- (p8);
\draw [dashed,ultra thick](p8) -- (q3);
\draw [ultra thick](p10) --(q3);
\draw [dashed,ultra thick](p10p) --(p2)--(p5)-- (q33);
\draw [ultra thick](p10p) -- (q33);
\draw [ultra thick](q3) -- (p12) -- (p3);
\draw [dashed,ultra thick](p3) -- (p7);
\draw [dashed,ultra thick](q33)--(p5) -- (p6);
\draw [ultra thick](q33) -- (p14) -- (p13) -- (p15)--(p3);
\draw [dashed,ultra thick](p3)--(p4);

\draw [dashed,ultra thick] (0,6) -- (0,10);
\draw [dashed, ultra thick] (3,6) -- (3,10);
\draw [ ultra thick] (4,4) -- (4,8);
\draw [dashed, ultra thick] (3,0) -- (3,4);
\draw [dashed, ultra thick] (0,-3) -- (0,1);
\draw [dashed, ultra thick] (-3,-3) -- (-3,1);
\draw [dashed, ultra thick] (-4,0) -- (-4,4);
\draw [dashed, ultra thick] (-3,3) -- (-3,7);
\draw [dashed, ultra thick] (-6.3,-6) -- (-6.3,-1);
\draw [ ultra thick] (-7,-9) -- (-7,-5);
\draw [ ultra thick] (-6,-12) -- (-6,-8);
\draw [ ultra thick] (-3,-12) -- (-3,-8);
\draw [ ultra thick] (0,-9) -- (0,-5);
\draw [ ultra thick] (0.6,-5) -- (0.6,-1);
\draw [dashed, ultra thick] (-2.2,5.5) -- (-2.5,9.5);
\draw [ ultra thick] (5,5) -- (5,9);
\draw [ ultra thick] (6,1) -- (6,5);
\draw [ ultra thick] (-9,-5) -- (-9,-1);

\node [above right] (p11) at (0,6){$ \mathbf{p_{11}}$};
\draw [fill] (0,6) circle [radius=.2];
\node [below left] (p8) at (3,6){$ \mathbf{p_{8}}$};
\draw [fill] (3,6) circle [radius=.2];
\node [ right] (q3) at (4,4) {$ \mathbf{q_{3}}$};
\draw [fill=black] (4,4)  circle [radius=.2];
\node [above right] (p7) at (3,0){$ \mathbf{p_{7}}$};
\draw [fill] (3,0) circle [radius=.2];
\node [above left] (p4) at (0,-3){$ \mathbf{p_{4}}$};
\draw [fill] (0,-3) circle [radius=.2];

\node [above left] (p6) at (-3,-3){$ \mathbf{p_{6}}$};
\draw [fill] (-3,-3) circle [radius=.2];

\node [above left] (p2) at (-4,0){$ \mathbf{p_{2}}$};
\draw [fill=red] (-4,0) circle [radius=.2];
\node [right] (p9) at (-3,3){$ \mathbf{p_{9}}$};
\draw [fill] (-3,3) circle [radius=.2];

\node [right] (p5) at (-6.3,-6){$ \mathbf{p_{5}}$};
\draw [fill] (-6.3,-6) circle [radius=.2];

\node [left] (q33) at (-7,-9){$ \mathbf{q_{3}}$};
\draw [fill=red] (-7,-9) circle [radius=.2];

\node [left] (p14) at (-6,-12){$ \mathbf{p_{14}}$};
\draw [fill] (-6,-12) circle [radius=.2];

\node [below] (p13) at (-3,-12){$ \mathbf{p_{13}}$};
\draw [fill] (-3,-12) circle [radius=.2];

\node [right] (p15) at (0,-9){$ \mathbf{p_{15}}$};
\draw [fill] (0,-9) circle [radius=.2];

\node [right] (p3) at (0.6,-5) {$ \mathbf{p_{3}}$};
\draw [fill=red] (0.6,-5)  circle [radius=.2];

\node [above right] (q2) at (-2.2,5.5) {$ \mathbf{q_{2}}$};
\draw [fill=red] (-2.2,5.5)  circle [radius=.2];

\node [above right] (p10) at (5,5) {$ \mathbf{p_{10}}$};
\draw [fill] (5,5)  circle [radius=.2];

\node [above right] (p12) at (6,1) {$ \mathbf{p_{12}}$};
\draw [fill] (6,1)  circle [radius=.2];

\node [left] (p10p) at (-9,-5) {$ \mathbf{p_{10}}$};
\draw [fill] (-9,-5)  circle [radius=.2];

\end{tikzpicture}

\end{center}
\caption{A schematic view of the fundamental domain of $\Gamma$ on $\Omega(\Gamma)$. The vertices colored with red  are the parabolic fixed points. The polygon colored with yellow is $\mathcal{O}_0^{-} \cup \mathcal{Q}_0^{+} \cup (\mathcal{T}_{1})_0^{\star}$.}
\label{figure:poly0}
\end{figure}
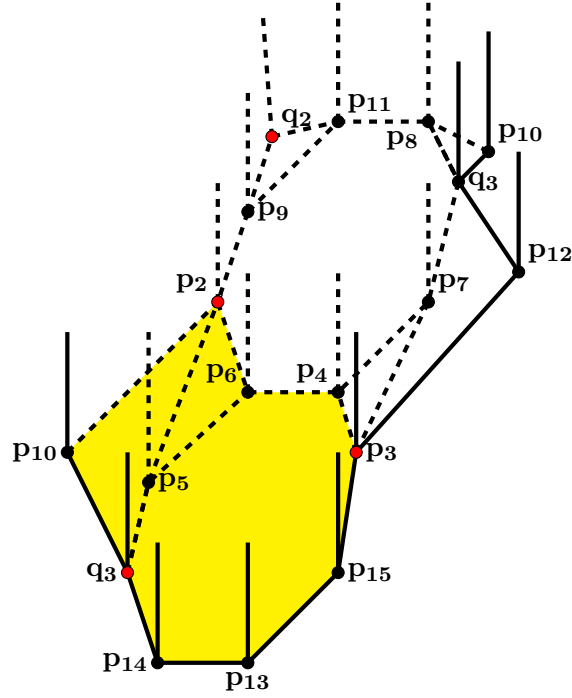

\begin{defn}
Let $p'_2=S^{-1}(p_2)$, $p_1=S^{-1}(q_{\infty})=[0,0]$ and $p'_{10}=S^{-1}(p_{10})$.
\end{defn}

\begin{lem}\label{lem:gluing}
Let $\mathcal{P}$ be the union $\mathcal{P}_{+} \cup S^{-1}(\mathcal{P}_{-1})$.
Then $\mathcal{P}$ is combinatorially a polyhedron with 8 triangular faces, 4 square faces, 2 pentagonal faces and 2 hexagonal faces.
The faces of $\mathcal{P}$ are paired as follows:
\begin{eqnarray*}
T: & (q_{\infty},p_2, p_9, q_2)\longmapsto (q_{\infty}, p_3, p_{12}, q_3), \\
S^{-1}T: & (q_{\infty}, q_2, p_{11}, p_8, p_{10}) \longmapsto (p_1, p_2, p_9, p_{11}, p_8),\\
(S^{-1}T)^2: & (q_2, p_9, p_{11}) \longmapsto (q_3, p_8, p_{10}),\\
S^{-1}: & (q_{\infty}, p_{10}, q_3) \longmapsto (p_1, p'_{10}, p_2),\\
S^{-1}T^{-1}S: & (p_1, p_8, q_3) \longmapsto (p_1, p'_{10}, p'_2),\\
S^{-2}: & (q_3, p_{12}, p_3, p_7) \longmapsto (p'_2, p'_{10}, p_2, p_6),\\
S^{-1}: & (q_{\infty}, p_2, p_6, p'_2, p_4, p_3) \longmapsto (p_1, p'_2, p_4, p_3, p_7, q_3),\\
S^{-1}: & ( p_6, p'_2, p_4) \longmapsto (p_4, p_3, p_7).
\end{eqnarray*}
\end{lem}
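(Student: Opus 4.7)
The plan is to realize $\mathcal{P}$ as a single combinatorial polyhedron by first identifying the common face between $\mathcal{P}_+$ and $S^{-1}(\mathcal{P}_-)$, then enumerating the remaining boundary faces of the union, and finally verifying each of the eight face pairings by direct matrix computation in $\pu$.

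First I would identify the gluing face. Since $S$ is part of the side-pairing of the Ford domain $D$, mapping $s_0^+$ to $s_0^-$, the inverse $S^{-1}$ carries $\tilde{s}_0^- = \mathcal{O}_0^- \cup \mathcal{Q}_0^-$ onto $\tilde{s}_0^+ = \mathcal{O}_0^+ \cup \mathcal{Q}_0^+$. As both $\mathcal{O}_0^\pm$ are octagons and both $\mathcal{Q}_0^\pm$ are quadrilaterals, a vertex-count argument forces $S^{-1}(\mathcal{O}_0^-) = \mathcal{O}_0^+$ and $S^{-1}(\mathcal{Q}_0^-) = \mathcal{Q}_0^+$. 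To pin down the combinatorial correspondence I would compute the action of $S^{-1}$ on the eight vertices of $\mathcal{O}_0^-$ and check that they match those of $\mathcal{O}_0^+$ in cyclic order, yielding in particular $S^{-1}(q_3) = p_2$, $S^{-1}(p_3) = q_3$, $S^{-1}(p_4) = p_7$ and $S^{-1}(p_6) = p_4$. Since $\mathcal{O}_0^-$ lies in $\mathcal{P}_-$ and $\mathcal{O}_0^+$ lies in $\mathcal{P}_+$, this glues $\mathcal{P}_+$ and $S^{-1}(\mathcal{P}_-)$ along the common face $\mathcal{O}_0^+$, turning $\mathcal{P}$ into a polyhedron in which $\mathcal{O}_0^+$ has become interior.

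Next I would enumerate the sixteen exterior faces of $\mathcal{P}$. The base pieces other than $\mathcal{O}_0^\pm$ contribute six faces: $\mathcal{Q}_0^-$, $(\mathcal{T}_1)_0^\diamond$, $(\mathcal{T}_2)_0^\diamond$, $(\mathcal{T}_2)_0^\star$ from $\mathcal{P}_+$ and $S^{-1}(\mathcal{Q}_0^+)$, $S^{-1}((\mathcal{T}_1)_0^\star)$ from $S^{-1}(\mathcal{P}_-)$. The remaining ten faces come from the cones with apex $q_\infty$ in $\mathcal{P}_+$ and apex $p_1 = S^{-1}(q_\infty)$ in $S^{-1}(\mathcal{P}_-)$; the key simplification is that adjacent lateral triangles which share an edge emanating from the apex and whose two base edges lie on the same isometric sphere of $\Gamma$ amalgamate into a single face. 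For example, the base edges $p_2 p_9$ and $p_9 q_2$ both lie on $\mathcal{I}_{-1}^-$ (by the sphere memberships recorded in Proposition \ref{prop:points}), so the cone triangles $(q_\infty, p_2, p_9)$ and $(q_\infty, p_9, q_2)$ merge into the quadrilateral $(q_\infty, p_2, p_9, q_2)$ lying on $\mathcal{I}_{-1}^-$. Performing this analysis at every boundary vertex of $\mathcal{O}_0^+$ and its $T$-translate produces the claimed list of $8$ triangles, $4$ quadrilaterals, $2$ pentagons and $2$ hexagons.

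Finally, to establish the face pairings, I would use the explicit matrix representatives of $S$ and $T$ from Section \ref{sec:parameter} together with the Heisenberg coordinates in Definition \ref{def:points} to compute the images of the listed vertices under each of the claimed pairing maps $T$, $S^{-1}T$, $(S^{-1}T)^2$, $S^{-1}$, $S^{-1}T^{-1}S$ and $S^{-2}$, checking that each polygon is carried vertex by vertex to its stated partner. The main obstacle lies in the combinatorial bookkeeping of the second step: one must determine for every vertex of $\mathcal{O}_0^+$ precisely which isometric spheres pass through it, and then show that consecutive cone triangles all lie on a common sphere whenever the lemma asserts a polygonal face of more than three sides. In particular the two hexagonal faces require four consecutive cone triangles to coalesce, a coalescence that depends on the special tangencies of the isometric spheres that appear precisely at $\theta = \pi/3$ (Corollary \ref{cor:intersections} and Proposition \ref{prop:s0star}). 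Once this combinatorial framework is in place, the eight pairing verifications reduce to routine matrix multiplications.
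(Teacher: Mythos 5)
Your overall architecture --- glue $\mathcal{P}_+$ and $S^{-1}(\mathcal{P}_-)$ along $\mathcal{O}_0^{+}=S^{-1}(\mathcal{O}_0^{-})$, enumerate the remaining boundary faces, and verify each pairing by explicit computation --- is the same as the paper's, and your identification of the gluing face, of the vertex correspondence under $S^{-1}$, and of the six base faces is correct. The genuine gap is your amalgamation criterion for the lateral faces: you merge two cone triangles sharing an apex edge precisely when their base edges lie on the same isometric sphere. This criterion is not sufficient to determine the faces, and it produces a wrong face list. The five consecutive boundary edges $[p_2,p_9]$, $[p_9,q_2]$, $[q_2,p_{11}]$, $[p_{11},p_8]$, $[p_8,p_{10}]$ of the base of $\mathcal{P}_+$ all lie on $\partial\mathcal{I}_{-1}^{-}$ (the first and fourth on the ridge $s_0^{+}\cap s_{-1}^{-}$, the other three on the ridge $s_{-1}^{-}\cap s_0^{\diamond}$), so your rule coalesces all five cone triangles into a single heptagonal face; the lemma instead splits them into the quadrilateral $(q_{\infty},p_2,p_9,q_2)$, paired by $T$, and the pentagon $(q_{\infty},q_2,p_{11},p_8,p_{10})$, paired by $S^{-1}T$. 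The failure is unavoidable for any ``same sphere'' rule: the edges $[p_9,q_2]$ and $[q_2,p_{11}]$ lie on the very same ridge $s_{-1}^{-}\cap s_0^{\diamond}$, yet belong to different faces. What actually separates them is the translate of $\mathcal{P}$ on the far side of the corresponding wall: beyond $[p_9,q_2]$ lies $\mathcal{Q}_{-1}^{-}$, whose cone sits in $T^{-1}(\mathcal{P}_+)\subset T^{-1}(\mathcal{P})$, while beyond $[q_2,p_{11}]$ lies $\mathcal{O}_{-1}^{-}$, whose cone sits in $T^{-1}(\mathcal{P}_-)\subset T^{-1}S(\mathcal{P})$. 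So the correct merging criterion is ``same neighbouring $\Gamma$-translate of $\mathcal{P}$'', and applying it requires the finer bookkeeping of how Proposition \ref{prop:polyhedra} distributes the pieces $\mathcal{H}$, $\mathcal{Q}$, $\mathcal{Q}'$, $\mathcal{T}$ of each side between $\mathcal{P}_+$ and $\mathcal{P}_-$ --- information your criterion never consults.

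A secondary problem is your description of the two hexagons as four consecutive cone triangles over a single apex. The hexagon $(q_{\infty},p_2,p_6,p'_2,p_4,p_3)$ contains the vertex $p'_2=S^{-1}(p_2)$, which lies strictly inside $\mathcal{I}_0^{+}$ and is not on the boundary of the base of $\mathcal{P}_+$; its boundary alternates between edges of the base of $\mathcal{P}_+$ and edges of the base of $S^{-1}(\mathcal{P}_-)$, so it cannot be assembled from cone triangles with the single apex $q_{\infty}$. The paper's own proof avoids committing to any local merging rule: it computes the four base pairings ($S^{-1}$ on the octagons, $S$ on the quadrilaterals, $S^2$ and $(S^{-1}T)^2$ on the triangles), notes via Lemma \ref{lem:goldman} that $S^{-1}(\mathcal{P}_-)$ lies inside $\mathcal{I}_0^{+}$ with $p_1$ at its center so that the union is a ball glued along $\mathcal{O}_0^{+}$, and then reads the global incidence structure off Figure \ref{figure:poly1}. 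To repair your argument you would need to replace the sphere-based rule by the neighbour-based one and redo the enumeration; as written, the second step would not reproduce the face list of the lemma.
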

\begin{proof}
The bases of $\mathcal{P}_{+} $ and $\mathcal{P}_{-1}$ are paired as follows:
\begin{eqnarray*}
  S^{-1}:& \mathcal{O}_{0}^{-} \longrightarrow \mathcal{O}_{0}^{+} \\
         & (p_3, p_4, p_6, p_5, q_3, q_{14}, p_{13}, p_{15}) \longmapsto (q_3, p_7, p_4, p_6, p_2, p_9, p_{11},p_8), \\
  S : & \mathcal{Q}_0^{+} \longrightarrow \mathcal{Q}_0^{-} \\
      & (p_2, p_5, q_3, p_{10}) \longmapsto (q_3, p_7, p_3, p_{12}), \\
  S^2 : & (\mathcal{T}_{1})_0^{\star} \longrightarrow (\mathcal{T}_{2})_0^{\star} \\
        & (p_2,p_5,p_6) \longmapsto (p_3, p_4, p_7), \\
  (S^{-1}T)^2 : & (\mathcal{T}_{1})_0^{\diamond} \longrightarrow (\mathcal{T}_{2})_0^{\diamond} \\
                 & (q_2, p_9, p_{11}) \longmapsto (q_3, p_8, p_{10}).
\end{eqnarray*}
Thus $S^{-1}(\mathcal{P}_{-1})$ and $\mathcal{P}_{+} $ are glued along $\mathcal{O}_0^{+}$.
According to Lemma \ref{lem:goldman}, $S^{-1}(\mathcal{P}_{-1})$ lie in the interior of $\mathcal{I}_0^{+}$, since $\mathcal{P}_{-1}$ lie in the exterior of $\mathcal{I}_0^{+}$.
Moreover, $p_1=S^{-1}(q_{\infty})=[0,0]$ is the center of the isometric sphere $\mathcal{I}_0^{+}$.
See Figure \ref{figure:poly1}.
\end{proof}

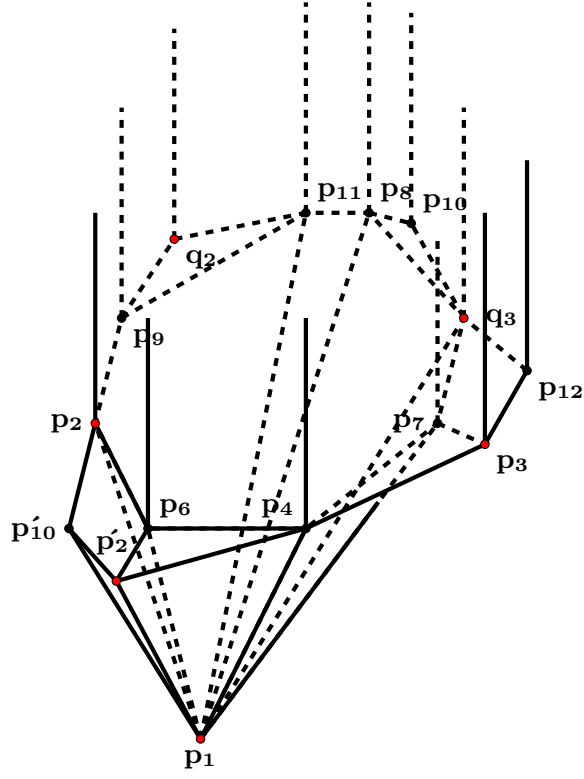
\begin{figure}
\begin{center}
\begin{tikzpicture}[scale=0.7]
\tikzstyle{every node}=[font=\small,scale=1.2]
\path (1,3) coordinate (p11);
\path (2.2,3) coordinate (p8);
\path (4,1) coordinate (q3);
\path (3.5,-1) coordinate (p7);
\path (1,-3) coordinate (p4);
\path (-2,-3) coordinate (p6);
\path (-3,-1) coordinate (p2);
\path (-2.5,1) coordinate (p9);
\path (-1.5,2.5) coordinate (q2);
\path (3,2.8) coordinate (p10);
\path (-1,-7) coordinate (p1);
\path (-2.6,-4) coordinate (p2p);
\path (-3.5,-3) coordinate (p10p);
\path (5.2,0) coordinate (p12);
\path (4.4,-1.4) coordinate (p3);

\draw [dashed,ultra thick](p9)--(q2)--(p11);
\draw [dashed,ultra thick](p8)--(p10)--(q3);
\draw [dashed,ultra thick](p9)--(p11)--(p8) -- (q3) -- (p7) -- (p4) -- (p6)--(p2)--cycle;
\draw [ultra thick](p2)--(p10p)--(p2p)--(p4);
\draw [ultra thick](p4)--(p3)--(p12);
\draw [dashed,ultra thick](p12)--(q3);
\draw [dashed,ultra thick](p7)--(p3);
\draw [ultra thick](p6)--(p2p);
\draw [ultra thick](p6)--(p2);
\draw [ultra thick](p6)--(p4);

\draw [dashed,ultra thick](1,3)--(1,7);
\draw [dashed,ultra thick](2.2,3)--(2.2,7);
\draw [dashed,ultra thick](4,1)--(4,5);
\draw [dashed,ultra thick](3.5,-1)--(3.5,2.5);
\draw [ultra thick](1,-3)--(1,1);
\draw [ultra thick](-2,-3)--(-2,1);
\draw [ultra thick](-3,-1)--(-3,3);
\draw [dashed,ultra thick](-2.5,1)--(-2.5,5);
\draw [ultra thick](5.2,0)--(5.2,4);
\draw [ultra thick](4.4,-1.4)--(4.4,3);
\draw [dashed,ultra thick](-1.5,2.5)--(-1.5,6.5);
\draw [dashed,ultra thick](3,2.8)--(3,6.8);
\draw [ultra thick](-1,-7)--(-2.6,-4);
\draw [ultra thick](-1,-7)--(-3.5,-3);
\draw [ultra thick](-1,-7)--(1,-3);
\draw [dashed,ultra thick](-1,-7)--(-2,-3);
\draw [dashed,ultra thick](-1,-7)--(-3,-1);
\draw [dashed,ultra thick](-1,-7)--(1,-3);
\draw [dashed,ultra thick](-1,-7)--(1,3) ;
\draw [dashed,ultra thick](-1,-7)--(2.2,3) ;
\draw [dashed,ultra thick](-1,-7)--(4,1) ;
\draw [dashed,ultra thick](-1,-7)--(3.5,-1)  ;
\draw [ultra thick](-1,-7)--(-1+4.5*0.75,-7+6*0.75)  ;

\node [above right] (p11) at (1,3){$ \mathbf{p_{11}}$};
\draw [fill] (1,3) circle [radius=.08];
\node [above right] (p8) at (2.2,3){$ \mathbf{p_{8}}$};
\draw [fill] (2.2,3) circle [radius=.08];
\node [right ] (q3) at (4.2,1){$ \mathbf{q_{3}}$};
\draw [fill=red] (4,1) circle [radius=.08];
\node [left ] (p7) at (3.5,-1){$ \mathbf{p_{7}}$};
\draw [fill] (3.5,-1) circle [radius=.08];
\node [above left ] (p4) at (1,-3){$ \mathbf{p_{4}}$};
\draw [fill] (1,-3) circle [radius=.08];
\node [above right ] (p6) at (-2,-3){$ \mathbf{p_{6}}$};
\draw [fill] (-2,-3) circle [radius=.08];
\node [left ] (p2) at (-3,-1){$ \mathbf{p_{2}}$};
\draw [ fill=red] (-3,-1) circle [radius=.08];
\node [below right ] (p9) at (-2.5,1){$ \mathbf{p_{9}}$};
\draw [fill] (-2.5,1) circle [radius=.08];
\node [below right ] (p12) at (5.2,0){$ \mathbf{p_{12}}$};
\draw [fill] (5.2,0) circle [radius=.08];
\node [below right ] (p3) at (4.4,-1.4){$ \mathbf{p_{3}}$};
\draw [fill=red] (4.4,-1.4) circle [radius=.08];

\node [below right ] (q2) at (-1.5,2.5){$ \mathbf{q_{2}}$};
\draw [fill=red] (-1.5,2.5) circle [radius=.08];

\node [below right] (p2p) at (-3.2,-2.8){$ \mathbf{\acute{p_{2}}}$};
\draw [fill=red] (-2.6,-4) circle [radius=.08];
\node [left] (p10p) at (-3.5,-3){$ \mathbf{\acute{p_{10}}}$};
\draw [fill] (-3.5,-3) circle [radius=.08];
\node [below] (p1) at (-1,-7){$ \mathbf{p_{1}}$};
\draw [fill=red] (-1,-7) circle [radius=.08];

\node  [above right] (p10) at (3,2.8){$ \mathbf{p_{10}}$};
\draw [fill] (3,2.8) circle [radius=.08];

\end{tikzpicture}
\end{center}
\caption{The combinatorial picture of $\mathcal{P}$. The vertices of $\mathcal{P}$ colored with red  are the parabolic fixed points.}
\label{figure:poly1}
\end{figure}

\begin{prop}\label{prop:fundamental-group}
Let $\Omega$ be the discontinuity region of $\Gamma$ acting on $\hc$. Then the fundamental group of $\Omega/\Gamma$ has a presentation
$$
\langle u, v, w | w^{-1}vu^{-1}v^{-1}wu=v^2wuw^{-3}u=id \rangle.
$$
\end{prop}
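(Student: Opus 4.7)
The plan is to apply a Poincar\'e polyhedron theorem to the polyhedron $\mathcal{P}=\mathcal{P}_{+}\cup S^{-1}(\mathcal{P}_{-})$ constructed in Lemma \ref{lem:gluing}. Since $\mathcal{P}$ is a fundamental polyhedron for $\Gamma$ acting on the closure of $\Omega$ with the eight face-pairings listed there, the theorem produces a presentation of $\pi_1(\Omega/\Gamma)$ whose generators are the face-pairing maps and whose relations are read off from the edge cycles of $\mathcal{P}$.

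First I would take the eight face-pairings of Lemma \ref{lem:gluing}, namely $T$, $S^{-1}T$, $(S^{-1}T)^2$, the three occurrences of $S^{-1}$, $S^{-1}T^{-1}S$ and $S^{-2}$, as initial free generators $g_1,\dots,g_8$. Then I would enumerate the edges of $\mathcal{P}$ and, for each edge, trace the edge cycle under successive face-pairings, reading off the word that must equal the identity. This yields the raw list of defining relations predicted by the Poincar\'e polyhedron theorem.

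The next step is Tietze simplification of the resulting presentation. I expect that five of the eight initial generators can be eliminated using edge-cycle relations to rewrite them as words in a distinguished triple, which I would relabel $u,v,w$. A natural choice suggested by the list above is $u=S^{-1}$, $v=T$ and a third independent face-pairing such as $w=S^{-1}T$, in which case the remaining face-pairings $(S^{-1}T)^{2}=w^{2}$, $S^{-2}=u^{2}$ and $S^{-1}T^{-1}S=uv^{-1}u^{-1}$ are already words in $u,v,w$. Under this elimination the remaining edge-cycle relations collapse, and I would confirm that exactly two non-trivial relations survive, taking the stated form $w^{-1}vu^{-1}v^{-1}wu=id$ and $v^{2}wuw^{-3}u=id$, by direct substitution and comparison with the group relations $S^{4}=(T^{-1}S)^{4}=id$ in $\Gamma$.

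The main obstacle is the combinatorial bookkeeping of the edge cycles on $\mathcal{P}$: with sixteen faces (eight triangles, four squares, two pentagons, two hexagons) and a correspondingly large edge set, tracking each cycle accurately is tedious, especially at the parabolic vertices $p_{2},p_{3},q_{2},q_{3}$ where several faces meet and where the compatibility with a consistent system of horoballs must be preserved. Once the raw presentation is in hand the Tietze reduction is essentially mechanical, and the principal remaining task is to verify that exactly the two stated relations survive while no spurious relations are introduced.
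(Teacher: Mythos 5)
Your proposal follows essentially the same route as the paper: the paper takes the eight gluing maps of Lemma \ref{lem:gluing} as generators $x_1,\dots,x_8$, reads off nine relations from the edge cycles of $\mathcal{P}$, and then Tietze-reduces by setting $u=x_1=T$, $v=x_2=S^{-1}T$, $w=x_7=S^{-1}$, which matches your plan up to relabelling. One caution: since $\pi_1(\Omega/\Gamma)$ only surjects onto $\Gamma$ (the presentation has three generators while $\Gamma$ has two), the three faces paired by the single isometry $S^{-1}$ must be kept as \emph{distinct} generators until the edge cycles force their identification, and ``comparison with the relations $S^4=(T^{-1}S)^4=id$ in $\Gamma$'' can serve only as a consistency check, never as a derivation of a relation in $\pi_1(\Omega/\Gamma)$.
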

\begin{proof}
Let $x_i, i=1,2,3,4,5,6,7,8$ be the corresponding gluing maps of $\mathcal{P}$ given in Lemma \ref{lem:gluing}.
These are the generators of the fundamental group of $\Omega/\Gamma$.

By considering the edge cycles of $\mathcal{P}$ under the gluing maps, we have the relations as follows.
\begin{enumerate}
  \item $x_7^{-1}\cdot x_5 \cdot x_7 \cdot x_1 =id,$
  \item $x_2^{-1} \cdot x_4 \cdot x_1 =id,$
  \item $x_2 \cdot x_3^{-1} \cdot x_4^{-1} \cdot x_6 \cdot x_1 =id,$
  \item $x_3^{-1} \cdot x_5^{-1} \cdot x_6 \cdot x_1 =id,$
  \item $x_2 \cdot x_3 \cdot x_2 =id,$
  \item $x_4^{-1} \cdot x_5 \cdot x_2=id,$
  \item $x_7 \cdot x_8 \cdot x_6=id,$
  \item $x_8 \cdot x_7 \cdot x_6=id,$
  \item $x_8^{-1} \cdot x_7=id.$
\end{enumerate}
For example, the edge cycle of $[q_{\infty},p_2]$ is
$$
[q_{\infty},p_2] \xrightarrow{x_1} [q_{\infty},p_3] \xrightarrow{x_7} [p_1,q_3] \xrightarrow{x_5} [p_1,p'_2] \xrightarrow{x_7^{-1}} [q_{\infty},p_2].
$$
Thus
$$
x_7^{-1}\cdot x_5 \cdot x_7 \cdot x_1=id.
$$
This is the relation in (1). The others can be given by a similar argument.

Simplifying the relations and setting $u=x_1, v=x_2, w=x_7$, we obtain the presentation of the fundamental group of $\Omega/\Gamma$.

\end{proof}

Now, we are ready to show the following theorem.
\begin{thm}\label{thm:s782}
Let $\Omega$ be the discontinuity region of $\Gamma$ acting on $\hc$. Then the quotient space $\Omega/\Gamma$ is homeomorphic to the two-cusped hyperbolic 3-manifold $s782$ in the SnapPy census.
\end{thm}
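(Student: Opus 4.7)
The plan is to convert the polyhedron $\mathcal{P}$ together with its face pairings from Lemma~\ref{lem:gluing} into an ideal triangulation of $\Omega/\Gamma$ and then identify that triangulation with the SnapPy description of $s782$.

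First, I would verify that $\Omega/\Gamma$ is a 3-manifold with exactly two cusps. The ideal vertices of $\mathcal{P}$ partition into two $\Gamma$-equivalence classes under the face pairings: one containing $q_{\infty}$ together with the $\Gamma$-orbit of points fixed by $T$, and another containing $p_{2}$, $q_{3}$ and the other parabolic fixed points corresponding to $\Gamma$-conjugates of $T^{-1}S^{2}$. Tracking horospherical cross-sections through the side identifications listed in Lemma~\ref{lem:gluing} should show that the link of each of these two classes is a torus, so $\Omega/\Gamma$ is a two-cusped 3-manifold. This also explains why $q_{\infty}$-orbit vertices and the $p_{2}$-orbit vertices never get mixed by the pairings, which is visible from the Heisenberg translation structure of $T$ and the parabolic nature of $T^{-1}S^{2}$.

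Next, I would subdivide each polygonal face of $\mathcal{P}$ into triangles in a way compatible with the side pairings, and then cone each resulting triangle to a suitable interior point to decompose $\mathcal{P}$ into finitely many ideal tetrahedra. The side pairings extend to this tetrahedral decomposition, giving an ideal triangulation of $\Omega/\Gamma$. A consistency check at this stage is Proposition~\ref{prop:fundamental-group}: the presentation
\[
\langle u, v, w \mid w^{-1}vu^{-1}v^{-1}wu = v^{2}wuw^{-3}u = id \rangle
\]
should be Tietze-equivalent to the fundamental group of $s782$ recorded in the census, and the two peripheral subgroups (generated by the cycle transformations around each cusp class) should match those of the two cusps of $s782$ up to conjugacy.

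Finally, I would feed the resulting triangulation into SnapPy and compare with its built-in description of $s782$. Since $\mathcal{P}$ is the boundary piece of a complex hyperbolic fundamental domain rather than a hyperbolic ideal polyhedron, Mostow rigidity is not directly available and the identification must be purely topological, realized through an explicit sequence of Pachner moves relating the two triangulations. This is where I expect the main obstacle to lie: a coincidence of fundamental-group presentations and peripheral data strongly suggests, but does not by itself prove, that the underlying manifold is $s782$ rather than another two-cusped 3-manifold with the same fundamental group; a direct combinatorial equivalence of triangulations, most likely verified with computer assistance (and with thanks to J.~Ma as acknowledged in the introduction), will be needed to conclude.
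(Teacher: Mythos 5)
Your route is genuinely different from the paper's. The paper does not build a triangulation at all: after establishing the presentation $\langle u,v,w \mid w^{-1}vu^{-1}v^{-1}wu = v^{2}wuw^{-3}u = id\rangle$ in Proposition \ref{prop:fundamental-group}, it uses \texttt{Magma} to produce an explicit isomorphism $\Psi:\pi_1(M)\to\pi_1(s782)$ (namely $\Psi(u)=c^{-1}b^{-1}$, $\Psi(v)=b^{-1}$, $\Psi(w)=a$), and then concludes topologically: by the prime decomposition of 3-manifolds, $M$ is a connected sum of $s782$ with a simply connected closed piece, which the Poincar\'e conjecture forces to be $S^{3}$. So the paper resolves exactly the worry you raise --- that an isomorphism of fundamental groups does not a priori give a homeomorphism --- not by exhibiting a combinatorial equivalence of triangulations, but by invoking irreducibility/asphericity via prime decomposition together with rigidity for Haken manifolds with the given peripheral structure; your caution is well placed in general, but for this class of manifolds the group-theoretic route does close. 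What your approach buys is independence from that chain of deep 3-manifold theorems, at the cost of a substantially harder computation: note that most vertices of $\mathcal{P}$ (all the $p_i$ with $i\geq 4$, and $p'_{10}$) are material points of $\Omega$ rather than cusps, so the decomposition you would feed to SnapPy is not an ideal triangulation but one with finite vertices that must be retriangulated away before any Pachner-move comparison with the census triangulation of $s782$; conversely, the paper's approach is short once the face identifications of Lemma \ref{lem:gluing} and the edge cycles are in hand. Your preliminary step of checking that the two classes of parabolic vertices give torus cusp links is a sensible sanity check that the paper leaves implicit in Proposition \ref{prop:points} and the consistent-horoball discussion.
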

\begin{proof}
Let $M=\Omega/\Gamma$. According to Proposition \ref{prop:fundamental-group},
 the fundamental group of $M$ has a presentation
$$
\pi_1(M)=\langle u, v, w | w^{-1}vu^{-1}v^{-1}wu=v^2wuw^{-3}u=id \rangle.
$$

The manifold $s782$ is a two-cusped hyperbolic 3-manifold with finite volume.
Its fundamental group provided by SnapPy has a presentation
$$
\pi_1(s_{782})=\langle a,b,c | a^2 c b^4 c=abca^{-1}b^{-1}c^{-1}=id \rangle.
$$

Using \texttt{Magma}, we get an isomorphism $\Psi: \pi_1(M)\longrightarrow \pi_1(s782)$ given by
$$
\Psi(u)=c^{-1}b^{-1}, \quad \Psi(v)=b^{-1},\quad \Psi(w)=a.
$$

Therefore $M$ will be the connect sum of  $s782$ and a closed 3-manifold with trivial fundamental group by the prime decompositions of 3-manifolds \cite{Hempel}.  The solution of the
Poincar\'e conjecture implies that the closed 3-manifold is the 3-sphere.  Thus $M$ is homeomorphic to  $s782$.  The proof of Theorem \ref{thm:s782}  is complete.


\end{proof}

\end{document}